\newcommand*{\QEDA}{\hfill\hbox{\vrule width1.0ex height1.0ex}}
\newtheorem{thm}{Theorem}[section]
\newtheorem{theorem}[thm]{Theorem}
\newtheorem{lemma}[thm]{Lemma}
\newtheorem{proposition}[thm]{Proposition}
\newtheorem{corollary}[thm]{Corollary}
\newcommand{\beq}{\begin{equation}}
\newcommand{\eeq}{\end{equation}}
\newcommand{\beqa}{\begin{eqnarray}}
\newcommand{\eeqa}{\end{eqnarray}}
\newcommand{\beqas}{\begin{eqnarray*}}
\newcommand{\eeqas}{\end{eqnarray*}}
\newcommand{\bi}{\begin{itemize}}
\newcommand{\ei}{\end{itemize}}
\newcommand{\nn}{\nonumber}
\newcommand{\R}{\mathbb{R}}
\newcommand{\lam}{{\lambda}}
\newcommand{\inner}[2]{\langle #1,#2\rangle}
\newcommand{\argmin}{\mathrm{argmin}\,}
\newcommand{\dom}{\mathrm{dom}\,}
\newcommand{\Argmin}{\mathrm{Argmin}\,}
\newcommand{\mConv}[1]{\overline{\mbox{\rm Conv}}_\mu\,(\R^{#1})}
\newcommand{\bConv}[1]{\overline{\mbox{\rm Conv}}\,(\R^{#1})}
\newcommand{\barco}{\overline{\mbox{co}}\,}
\newcommand{\tx}{\tilde x}
\begin{document}
	\title{A unified analysis of
	a class of proximal bundle methods \\
	for solving hybrid convex composite optimization problems}
	\date{October 3, 2021 (first revision: January 7, 2023; second revision: March 27, 2023)}
    \author{
		Jiaming Liang \thanks{Department of Computer Science, Yale University, New Haven, CT 06511 (email: {\tt jiaming.liang@yale.edu}).
		}
  \qquad
		Renato D.C. Monteiro \thanks{School of Industrial and Systems
			Engineering, Georgia Institute of
			Technology, Atlanta, GA 30332
			(email: {\tt renato.monteiro@isye.gatech.edu}). This work
			was partially supported by ONR Grant N00014-18-1-2077 and AFOSR Grant FA9550-22-1-0088.}
	}
	\maketitle
	
	\begin{abstract}
		
		This paper presents a  proximal bundle (PB) framework based on a generic bundle update scheme for
		solving the hybrid convex composite optimization (HCCO) problem and
		establishes
		a common iteration-complexity bound for any variant belonging to it.
		As a consequence,
	iteration-complexity bounds
		for three
		PB variants based on different
		bundle update schemes are obtained in
		the HCCO context for the first time and in a unified manner.
		While two of the
		PB variants are universal
		(i.e., their implementations do not require parameters associated with the HCCO instance),
		the other newly (as far as the authors are aware of) proposed one
		is not  but has the
		advantage that it generates simple, namely one-cut,
		bundle models.
	    The paper also presents a universal adaptive PB variant
	    (which is not necessarily an instance of the framework) based on one-cut models   and
	    shows that its iteration-complexity is the same as the two aforementioned universal PB variants.\\

		{\bf Key words.} hybrid convex composite optimization, iteration-complexity, proximal bundle method, universal method
		\\
		
		{\bf AMS subject classifications.} 
		49M37, 65K05, 68Q25, 90C25, 90C30, 90C60
	\end{abstract}
	 
	\section{Introduction}\label{sec:intro}
	
	
	
	
	
	Let $ f, h: \R^{n} \rightarrow \R\cup \{ +\infty \} $ be proper lower semi-continuous  convex functions such that
	$ \dom h \subseteq \dom f $ and
	$h - \mu \|\cdot\|^2/2$ is convex
	for some $ \mu\ge 0 $, and
	consider
	the optimization problem
	\begin{equation}\label{eq:ProbIntro}
	\phi_{*}:=\min \left\{\phi(x):=f(x)+h(x): x \in \R^n\right\}.
	\end{equation}
	It is said that \eqref{eq:ProbIntro} is a hybrid convex composite optimization (HCCO) problem if there exist
	nonnegative scalars $M_f$ and $L_f$
	and a
	first-order oracle $f':\dom h \to \R^n$
	(i.e., $f'(x)\in \partial f(x)$
	for every $x \in \dom h$)
	satisfying the $(M_f,L_f)$-hybrid condition,
	namely:
	$\|f'(u)-f'(v)\| \le 2M_f + L_f \|u-v\|$ for every $u,v \in \dom h$. The main goal of this paper
	is to study the complexity of
	proximal bundle methods for solving the HCCO problem \eqref{eq:ProbIntro}
	based on different bundle update schemes.
	Instead of focusing on a particular proximal bundle method, our unified approach considers
	a framework of
	 generic proximal bundle
	methods
 (referred to as the
	GPB framework)
 based on
	a generic bundle update scheme,
	and establishes a common iteration-complexity
	bound  for all instances
	belonging to it.
	
	{\it Method outline.}
	Like all other proximal bundle methods, an iteration of a GPB variant solves the prox bundle subproblem
	\begin{equation}\label{eq:x-pre}
	    x = \underset{u\in \R^n}\argmin \left \{\Gamma (u) + \frac{1}{2\lam} \|u-x^c\|^2 \right\}
	\end{equation}
	where $\lam$ is the prox stepsize, and
	$x^c$ and $\Gamma$ are the current
	prox-center and bundle function, respectively.
	Moreover, it also performs two types of iterations, i.e., serious and null ones.
    In a serious iteration, the prox-center is
    updated to $x^c \leftarrow x $ and
    the updated bundle function $\Gamma^+$
    is chosen so as to
    satisfy $\Gamma^+\ge \ell_f(\cdot;x)+h$
    where $\ell_f(\cdot;x)= f(x)+\inner{f'(x)}{\cdot-x}$.
    In a null iteration, the prox-center does not
    change but $\Gamma$ is updated
    according to a certain
    bundle update scheme (which is usually more restrictive than the ones in the serious iterations).
    
	In order to illustrate the use of the
	generic bundle update scheme, this paper
	considers three specific well-known
	bundle update schemes and shows that
	they can all be viewed as special cases
	of the generic one. We now briefly describe
	the specific ones in the next three itemized paragraphs.
	\begin{itemize}
	    \item [(E1)] {\bf one-cut scheme}:
	    This scheme obtains $\Gamma^+$ as
	    \begin{equation}\label{eq:affine}
        \Gamma^+= \Gamma^+_\tau := \tau \Gamma + (1-\tau) [\ell_f(\cdot;x)+h]
        \end{equation}
        where $x$ is as in \eqref{eq:x-pre} and $\tau\in (0,1)$ depends on $(L_f,M_f,\mu)$. Clearly, if
        $\Gamma$ is the sum of $h$ and
        an affine function underneath $f$,
        then so is $\Gamma^+$.


	    
	   
	    \item [(E2)] \textbf{two-cuts scheme:}
	    Assume that
	    $\Gamma=\max \{A_f,\ell_f(\cdot;x^-)\}+h$ where $A_f$ is an affine function satisfying $A_f\le f$ and $x^-$ is the previous iterate.
	    This scheme sets the next bundle function $\Gamma^+$ to one similar to
	    $\Gamma$ but with  $(x^-,A_f)$ replaced by
	    $(x,A_f^+)$ where
	    $A_f^+ =  \theta A_f + (1-\theta) \ell_f(\cdot;x^-)$ for some
	    $\theta\in[0,1]$ which does not
	    depend on $(L_f,M_f,\mu)$.
	    
	    \item [(E3)] \textbf{multiple-cuts scheme:}
	    The current bundle function $\Gamma$ is of the form $\Gamma=\Gamma(\cdot;B)$ where
	    $B \subset \R^n$ is a finite set (i.e., the current  bundle set) and $\Gamma(\cdot;B)$ is defined as
	    \begin{equation}\label{eq:Gamma-E2}
	        \Gamma(\cdot;B) := \max \{ \ell_f(\cdot;b) : b \in B \}+h.
	    \end{equation}
	    This scheme obtains $\Gamma^+$ as $\Gamma^+ = \Gamma(\cdot;B^+)$ where
	    $B^+$ is the updated bundle set obtained
	    by possibly 
	    removing some points from $B$ and
	    then adding the most recent $x$ to
	    the resulting set.
	\end{itemize}
	Throughout out the paper, we refer to the GPB instances based on (E1), (E2) and (E3) as  1C-PB, 2C-PB and MC-PB, respectively.
	

	
	{\it Contribution.}
	Regardless of the parameter
	triple $(L_f, M_f, \mu)$,
	it is shown that
	the iteration-complexity for any GPB variant to obtain a $\bar \varepsilon$-solution of the HCCO problem \eqref{eq:ProbIntro} (i.e., a point $\bar x\in \dom h$ satisfying $\phi(\bar x)-\phi^*\le \bar \varepsilon$) is
	\begin{equation}\label{eq:bound-mu}
		{\cal O}\left(\min \left\lbrace \frac{ (M_f^2+\bar \varepsilon L_f) d_0^2}{\bar \varepsilon^2}, \left( \frac{M_f^2+\bar \varepsilon L_f}{ \mu \bar \varepsilon} + 1 \right) \log\left( \frac{ \mu d_0^2}{ \bar \varepsilon} + 1\right) \right\rbrace  +1 \right)
	\end{equation}
	for a large range of prox stepsizes $\lam$, where 
  $d_0$ denotes the distance of the initial point $x_0$ to the optimal solution set of \eqref{eq:ProbIntro}.
	Since 2C-PB and MC-PB methods do not rely on $(L_f, M_f)$, a sharper
	iteration-complexity bound can be obtained 
	for them by replacing
	$(M_f,L_f)$ in \eqref{eq:bound-mu} by $(\bar M_f, \bar L_f)$, respectively, where
	$(\bar M_f, \bar L_f)$ is the unique pair
	which minimizes $M_f^2 +\bar \varepsilon L_f$ over the set of
	pairs $(M_f,L_f)$ satisfying the $(M_f,L_f)$-hybrid condition of $f'$.
	Moreover,
	even though this sharper complexity bound can not be shown for 1C-PB, Section \ref{sec:adap} presents an
	adaptive version of
	this variant where $\tau$
	in \eqref{eq:affine},
	instead of being chosen
	as a function of $(L_f,M_f)$,
	is adaptively searched
	so as to satisfy a key inequality
	condition. Finally,
	Section \ref{sec:adap} also shows that this adaptive variant
	 has the same iteration-complexity as that of 2C-PB and MC-PB.

    {\it Related literature.}
    Proximal bundle methods are known to be efficient algorithms for solving nonsmooth convex composite optimization (NCCO) problems, i.e., instances of \eqref{eq:ProbIntro} for which there exists
    $M_f \ge 0$ such that 
    the hybrid condition holds with $L_f=0$.
    Some preliminary ideas towards the development of
the proximal bundle method were first presented in \cite{lemarechal1975extension,wolfe1975method}
and formal presentations of the method were given in
\cite{lemarechal1978nonsmooth,mifflin1982modification}.
Convergence analysis of the proximal bundle method for
NCCO problems has been broadly discussed
in the literature and can be found for example
in the textbooks \cite{ruszczynski2011nonlinear,urruty1996convex}.
Different bundle management policies in the context of proximal bundle methods are discussed for example in
\cite{du2017rate,frangioni2002generalized,kiwiel2000efficiency,de2014convex,ruszczynski2011nonlinear,van2017probabilistic}.








Iteration-complexity bounds have been established for some proximal bundle methods in the context of the NCCO problem with $\mu=0$
(see for example \cite{astorino2013nonmonotone,diaz2021optimal,kiwiel2000efficiency,liang2020proximal}).
Papers \cite{astorino2013nonmonotone,kiwiel2000efficiency} both consider the NCCO problem where
	$h$ is the indicator function
	of a nonempty closed convex set, and \cite{diaz2021optimal} considers the NCCO problem where $h$ is identically zero.
	Moreover, 
paper \cite{kiwiel2000efficiency}
	obtains the first ${\cal O} (\bar \varepsilon^{-3})$ complexity bound, and \cite{astorino2013nonmonotone,diaz2021optimal} subsequently also derive an ${\cal O}(\bar\varepsilon^{-3})$ bound. 
	On the other hand, a previous authors' paper \cite{liang2020proximal} proposes a proximal bundle variant using a novel condition to decide
  whether to perform a serious or null iteration
  which does not necessarily yield a function value decrease.
	More importantly, \cite{liang2020proximal} establishes the first ${\cal O} (\bar \varepsilon^{-2})$ complexity bound for a large range of prox stepsizes, and shows that the bound is indeed optimal.

	
	More specialized
	iteration-complexity bounds for some proximal bundle methods in the context of
	the NCCO problem
	with $\mu>0$ have also been
	established in \cite{diaz2021optimal,du2017rate,liang2020proximal}.
	More specifically, 
     \cite{du2017rate}
	derives a $\tilde {\cal O} (\bar \varepsilon^{-1})$ iteration-complexity bound for a proximal bundle method with prox stepsize set to
 $\lam=1/\mu$. Moreover,
 improving on the analysis of
 \cite{du2017rate},
 paper 
    \cite{diaz2021optimal}
 establishes
 the optimal bound ${\cal O} (\bar \varepsilon^{-1})$ for the same method.
    Finally, \cite{liang2020proximal} also establishes a
	$\tilde {\cal O} (\bar \varepsilon^{-1})$ iteration-complexity bound for its
	proximal bundle variant.
    In contrast to \cite{diaz2021optimal,du2017rate}, the bound in \cite{liang2020proximal} is
    shown to be optimal (up to a logarithmic term) 
 for a large range of prox stepsizes.

    	The current paper improves \cite{liang2020proximal} in the following aspects:
1) it deals with the more general HCCO problem;
2) in contrast to \cite{liang2020proximal},
it  nowhere assumes
that $h$ is Lipschitz continuous nor imposes any condition
on the parameter $\mu$, and
shows that the iteration-complexity bound
\eqref{eq:bound-mu} holds for prox stepsize ranges
which are larger than the ones in \cite{liang2020proximal};
3) while the proximal bundle variant of \cite{liang2020proximal} is based on
the bundle
update scheme (E3), GPB 
is a framework
based on a generic bundle update scheme which
contains
proximal bundle variants
based on different update schemes
(such as (E1)-(E3));
moreover, its unified analysis presented
here applies
to all these proximal bundle variants;
and 4) as far as the authors are aware of, it presents and analyzes for the first time
a one-cut proximal bundle method
for both NCCO and HCCO problems and also presents a universal variant of such method.


    Another method related, and developed subsequently, to
the proximal bundle method is the  bundle-level method,
which was first proposed in \cite{lemarechal1995new}
and extended in many ways in \cite{ben2005non,kiwiel1995proximal,lan2015bundle}.
These methods have been shown to have optimal
iteration-complexity in the setting
of the NCCO problem with $h$ being the indicator function
of a compact convex set. 
Since their generated subproblems
do not have a proximal term, and hence do not use a prox stepsize,
they are different
from the ones studied in this paper.
    Finally, paper \cite{de2016doubly} presents a doubly stabilized bundle method for solving NCCO problems whose prox subproblems combine elements from both proximal bundle and bundle-level methods
    and analyzes its
    asymptotic convergence (but not its iteration-complexity).

    {\it Organization of the paper.}
    Subsection~\ref{subsec:DefNot}  presents basic definitions and notation used throughout the paper.
Section~\ref{sec:GPB} formally
describes the assumptions
on the HCCO problem \eqref{eq:ProbIntro},
reviews
the constant stepsize composite subgradient (CS-CS) method
and discusses its iteration-complexity.
Section~\ref{sec:framework} presents a generic bundle update scheme, describes the GPB framework and states the main results of the paper, namely, the iteration-complexity of GPB.
Section~\ref{sec:analysis} contains three subsections, and they provide the analysis of bounds on the number of the serious, null and total iterates, respectively.
Section \ref{sec:adap} presents the adaptive variant of 1C-PB and establishes the iteration-complexity of it.
Section~\ref{sec:conclusion} presents some concluding remarks and possible extensions.
Appendix~\ref{sec:technical} provides a few useful technical results.
Appendix~\ref{sec:recursive} presents two recursive formulas and their related results.
Appendix~\ref{sec:CS pf} provides the proof of 
the iteration-complexity for the CS-CS method, and describes an adaptive variant of CS-CS and establishes its iteration-complexity.
Finally, Appendix~\ref{sec:pf E2E3} provides the proofs of properties of bundle update schemes (E2) and (E3).

    \subsection{Basic definitions and notation} \label{subsec:DefNot}
    
    Let $\R$ denote the set of real numbers.
    Let $ \R_+ $ and $ \R_{++} $ denote the set of non-negative real numbers and the set of positive real numbers, respectively.
	Let $\R^n$ denote the standard $n$-dimensional Euclidean space equipped with  inner product and norm denoted by $\left\langle \cdot,\cdot\right\rangle $
	and $\|\cdot\|$, respectively. 
	Let $\log(\cdot)$ denote the natural logarithm.

	Let $\Psi: \R^n\rightarrow (-\infty,+\infty]$ be given. Let $\dom \Psi:=\{x \in \R^n: \Psi (x) <\infty\}$ denote the effective domain of $\Psi$ 
	and $\Psi$ is proper if $\dom \Psi \ne \emptyset$.
	A proper function $\Psi: \R^n\rightarrow (-\infty,+\infty]$ is $\mu$-convex for some $\mu \ge 0$ if
	$$
	\Psi(\alpha z+(1-\alpha) z')\leq \alpha \Psi(z)+(1-\alpha)\Psi(z') - \frac{\alpha(1-\alpha) \mu}{2}\|z-z'\|^2
	$$
	for every $z, z' \in \dom \Psi$ and $\alpha \in [0,1]$.
 The set of all proper lower semicontinuous $\mu$-convex functions is denoted by $\mConv{n}$.
 When $\mu=0$, we simply denote
    $\mConv{n}$ by $\bConv{n}$.
	For $\varepsilon \ge 0$, the \emph{$\varepsilon$-subdifferential} of $ \Psi $ at $z \in \dom \Psi$ is denoted by
	\[
        \partial_\varepsilon \Psi (z):=\left\{ s \in\R^n: \Psi(z')\geq \Psi(z)+\left\langle s,z'-z\right\rangle -\varepsilon, \forall z'\in\R^n\right\}.
        \]
	The subdifferential of $\Psi$ at $z \in \dom \Psi$, denoted by $\partial \Psi (z)$, is by definition the set  $\partial_0 \Psi(z)$.

    Finally, even though ${\cal O}(\cdot)$ is a well-known concept in the study of complexity of algorithms, it is convenient for the purpose of our presentation to give a slightly stronger meaning to it,
    namely, if $f$ and $g$ are two positive functions defined in a certain set $\Omega$, the notation
    $f(x) = {\cal O}(g(x))$ means that
    there exists constant $C>0$ such that
    $f(x) \le C g(x)$ for all $x \in \Omega$.
    
	
	\section{Problem of interest and a review of the CS-CS method}\label{sec:GPB}

	This section consists of two subsections. The first one  describes the main problem and the assumptions imposed on it. The second one reviews the CS-CS method and an adaptive variant of it, and describes their iteration-complexity bounds for obtaining a
	$\bar \varepsilon$-solution of the main problem.
	
	\subsection{Main problem and assumptions}
	
	The problem of interest in this paper is \eqref{eq:ProbIntro} which is
	assumed to satisfy the following conditions
	for some triple
	$(L_f, M_f,\mu) \in \R_+^3 $:
	\begin{itemize}
		\item[(A1)]
		$f \in \bConv{n}$ and $h \in \mConv{n}$ are such that
		$\dom h \subset \dom f$, and a subgradient oracle,
		 i.e.,
		a function $f':\dom h \to \R^n$
		satisfying $f'(x) \in \partial f(x)$ for every $x \in \dom h$, is available;
		\item[(A2)]
		the set of optimal solutions $X^*$ of
		problem \eqref{eq:ProbIntro} is nonempty;
		\item[(A3)]
		for every $x,y \in \dom h$,
		\[
		\|f'(x)-f'(y)\| \le 2M_f + L_f \|x-y\|.
		\]
	\end{itemize}

	Throughout this paper, an instance of \eqref{eq:ProbIntro} means a triple $(f,f';h)$ satisfying
conditions (A1)-(A3) for some triple of parameters $(L_f,M_f,\mu) \in \R_+^3$.

We now add a few remarks about assumptions (A1)-(A3).
First,
 letting 
    \begin{equation}\label{def:gamma}
	\ell_f(\cdot;x) := f(x)+\inner{f'(x)}{\cdot-x} \quad \forall x\in \dom h,
	\end{equation}
    then it is well-known that (A3) implies that for every $x,y \in \dom h$,
	\begin{equation}\label{ineq:est}
	f(x)-\ell_f(x;y) \le 2M_f \|x-y\| + \frac{L_f}{2}\|x-y\|^2.
	\end{equation}
 Second,
 an obvious example of $f$ satisfying (A3) is the sum of an $M_f$-Lipschitz continuous function and a function whose gradient is $L_f$-Lipschitz continuous, e.g.,
$f(x)=M_f\|x\|+L_f\|x\|^2/2$.
 Third, another way of obtaining
 functions $f$ satisfying (A3)
 is discussed in
 Proposition \ref{lem:hybrid} below.

 We now discuss other quantities which, in addition to the parameters $L_f$, $M_f$, and $\mu$, are also
used in the complexity
bounds obtained in 
this paper.
For a given initial
	point $x_0 \in \dom h$,
 we denote its distance to
	 $X^*$ as
\begin{equation}\label{def:d0}
	d_0 := \|x_0-x_0^*\|, \ \ \mbox{\rm where} \ \ \ 
	x_0^* := \argmin \{\|x_0-x^*\|: x^*\in X^*\}.
\end{equation}
Alternative quantities which are used in place of
$M_f$ and $L_f$ are as follows.
	First note that
 the set $\Omega \subset \R^2_+$ consisting of
	 the pairs $(M_f,L_f)$
	satisfying (A3) is easily seen to be a (nonempty) closed convex set. Moreover,
	for a given tolerance
	$\bar \varepsilon>0$, it is easily seen that
	there exists a unique
	pair $(\bar M_f(\bar \varepsilon),\bar L_f(\bar \varepsilon))$ which minimizes
	$M_f^2+\bar \varepsilon L_f$ over $\Omega$ and,
	without any loss of clarity,
	we denote this pair simply by
	$(\bar M_f,\bar L_f)$ and
	define
		\begin{equation}\label{def:T}
	    T_{\bar \varepsilon} := \left( \bar M_f^2+\bar \varepsilon \bar L_f \right)^{1/2}.
	\end{equation}
	Moreover, if there exists a
	pair $(M_f,0)$ satisfying
	(A3), then
	the smallest $M_f$
	with this property is denoted by $\bar M_{f,0}$; otherwise,
	if no such pair exists, then
	we set $\bar M_{f,0}:=\infty$.
	Finally, it is easily seen that
	$\bar M_{f,0} \ge T_{\bar \varepsilon} \ge \bar M_f$
	and that any one of these two inequalities can hold strictly. For example,
	if $f=\|\cdot\| + \|\cdot\|^2/2$ and $h \equiv 0$, then we can easily see that $\bar M_{f,0}=\infty$,
	$\bar M_f=1$, and $T_{\bar \varepsilon} \in (1,\infty)$ for any $\bar \varepsilon>0$.


The following result, whose proof is postponed to Appendix~\ref{sec:technical},
gives conditions on $(f,h)$ which guarantee that (A3) holds.

\begin{proposition}\label{lem:hybrid}
Assume that (A1) holds and that, for some $\nu \in (0,1) $, the function $f'$ in
(A1) satisfies
    \begin{equation}\label{eq:hybrid-holder}
        \|f'(x)-f'(y)\|\le 2 M_\nu + L_\nu \|x-y\|^\nu, \quad \forall x, y \in \dom h
    \end{equation}
    and, for any $\alpha>0$,
    define
    \begin{equation}\label{eq:ML}
        M_f(\alpha):=M_\nu + \frac{L_\nu \alpha}{2}, \quad L_f (\alpha):= L_\nu \nu\left(\frac{1-\nu}{\alpha}\right)^{\frac{1-\nu}{\nu}}.
    \end{equation}
    Then,
        for any $\alpha>0$,
        the pair $(M_f,L_f)=(M_f(\alpha),L_f(\alpha))$ satisfies (A3) and
        \begin{equation}\label{ineq:inf}
            \inf_{\alpha>0} \{ M_f(\alpha)^2 + \bar \varepsilon L_f(\alpha) \} \le 2\left(M_\nu^2+ \bar \varepsilon^{\frac{2\nu}{\nu+1}} L_\nu^{\frac{2}{\nu + 1}}\right).
        \end{equation}
    As a consequence,
\begin{equation}\label{ineq:T}
            T_{\bar\varepsilon}\le \sqrt{2}\left(M_\nu+ \bar \varepsilon^{\frac{\nu}{\nu+1}} L_\nu^{\frac{1}{\nu + 1}}\right).
        \end{equation}
\end{proposition}


We now make two remarks about \eqref{eq:hybrid-holder}.
First, a trivial example of a pair $(f,h)$ satisfying \eqref{eq:hybrid-holder} is $f(\cdot)=M_\nu\|\cdot\|+L_\nu\|\cdot\|^{\nu+1}/(\nu+1)$ and $h \equiv 0$.
More generally, the sum of an $M_f$-Lipschitz continuous function on $\dom h$ and a function whose gradient is $\nu$-H\"{o}lder continuous on $\dom h$ satisfies \eqref{eq:hybrid-holder}.
Second, if \eqref{eq:hybrid-holder} holds with $M_\nu=0$, it follows that
$f$ is differentiable on $\dom h$ and its gradient is $\nu$-H\"{o}lder continuous on $\dom h$.
Algorithms for solving instances of \eqref{eq:ProbIntro} satisfying \eqref{eq:hybrid-holder}
with $M_\nu=0$ have been studied for example in
\cite{lan2015bundle,nesterov2015universal}.

		 Finally,
for a given  tolerance $\bar \varepsilon>0$, it is said that an algorithm for solving \eqref{eq:ProbIntro}
has $\bar \varepsilon$-iteration complexity ${\cal O}(T)$ if its total number of iterations until it obtains a $ \bar \varepsilon $-solution
 is bounded by 
 $C (T+1)$ where $C>0$ is a
universal constant.

	
	\subsection{Review of the CS-CS method}\label{subsec:cs}
	
	We start by reviewing the CS-CS method.
	The CS-CS method with initial point $x _0 \in \dom h$ and constant prox stepsize $\lam>0$, denoted by CS-CS$(x_0,\lam)$,
	recursively computes its iteration sequence $\{x_j\}$
	according to
	\begin{equation}\label{eq:sub}
		x_{j+1} =\underset{u\in  \R^n}\argmin
		\left\lbrace  \ell_f(u;x_{j})+h(u) +\frac{1}{2\lam} \|u- x_{j} \|^2 \right\rbrace
		\qquad \forall j \ge 0.
	\end{equation}
	

	For any given universal constant $C >1$, pair $(M_f,L_f)$ satisfying (A3), and tolerance $\bar \varepsilon>0$, it follows from
	Proposition \ref{prop:sub-new} that 
	CS-CS$(x_0,\lam)$ with any stepsize $\lam$ such that
	$\bar \varepsilon/[4 C (M_f^2+\bar \varepsilon L_f)] \le \lam \le \bar \varepsilon/[4 (M_f^2+\bar \varepsilon L_f)]$,
	has $\bar \varepsilon$-iteration complexity given by
	\begin{equation}\label{eq:bound}
		{\cal O} \left(\min \left\lbrace \frac{ (M_f^2+\bar \varepsilon L_f) d_0^2}{\bar \varepsilon^2}, \left( \frac{M_f^2+\bar \varepsilon L_f}{\mu \bar \varepsilon} + 1 \right) \log\left( \frac{\mu d_0^2}{ \bar \varepsilon} + 1\right) \right\rbrace  +1\right)
	\end{equation}
	(see our slightly modified definition of ${\cal O}(\cdot)$ in Subsection \ref{subsec:DefNot}) with the convention that
	the second term is equal to the first one when $\mu=0$.
	(It is worth noting that the second term 
	converges to the first one as
	$\mu \downarrow 0$.)
	
	In order to obtain the $\bar \varepsilon$-iteration complexity \eqref{eq:bound}, the CS-CS method requires the knowledge of $(M_f, L_f)$ satisfying (A3) to compute a suitable $\lam$.
	Subsection \ref{subsec:A-CS} presents an adaptive variant of the CS-CS method which does not require such knowledge. More precisely, this adaptive variant 
    starts with any stepsize $\lam_0>0$, employs a backtracking procedure to compute a nonincreasing sequence $\{\lam_j\}$ such that each $\lam_j$ satisfies a key condition, and recursively performs iterations similar to \eqref{eq:sub}. It is shown in Proposition \ref{prop:A-CS} that, without the prior knowledge of $T_{\bar \varepsilon}$, the adaptive variant of CS-CS has  $\bar \varepsilon$-iteration complexity given by
	\begin{equation}\label{eq:bound-bar}
    {\cal O}\left(\min \left\lbrace \frac{T_{\bar \varepsilon}^2 d_0^2}{\bar \varepsilon^2}, \left(\frac{T_{\bar \varepsilon}^2}{\mu \bar \varepsilon} + 1\right) \log\left( \frac{\mu d_0^2}{ \bar \varepsilon} + 1\right) \right\rbrace +1 \right).
    \end{equation}
    It is worth noting that bound \eqref{eq:bound-bar} is better than the one for the CS-CS method (i.e., \eqref{eq:bound})
	due to the fact that it is expressed in terms of
	the tighter quantity $T_{\bar \varepsilon}^2$ instead of the estimate $ M_f^2+\bar \varepsilon L_f$.

	\section{The GPB Framework}\label{sec:framework}
	
	This section contains three subsections. Subsection \ref{subsec:update} describes a generic bundle update scheme that is used to perform
	the null iterations of a
	method in the GPB framework.
	Subsection \ref{subsec:GPB} presents the GPB framework and Subsection \ref{subsec:results}
	describes the main complexity results about it.
	
	\subsection{Bundle update schemes}\label{subsec:update}


Bundle methods discussed in the literature rely on different bundle update schemes,
i.e., schemes for updating the bundle function $\Gamma$ in \eqref{eq:x-pre}
	which approximates the objective function of \eqref{eq:ProbIntro}.
	Instead of focusing
	on a specific bundle update scheme, we describe in
	this subsection
	a generic scheme which includes many of the ones
	considered in the literature.
	This subsection also
	gives the details of the three concrete examples (E1)-(E3) of the generic bundle update scheme. 

    We start by describing the bundle update (BU) blackbox. 
    
    \noindent\rule[0.5ex]{1\columnwidth}{1pt}
	
	BU

    \noindent\rule[0.5ex]{1\columnwidth}{1pt}
    {\bf Input:} $(\lam,\tau) \in \R_{++} \times (0,1)$ and
    $(x^c,x,\Gamma) \in \R^n \times
    \R^n \times \mConv{n}$ such that $\Gamma\le \phi$ and
      \eqref{eq:x-pre} holds.
    \begin{itemize}
        \item
        find function $\Gamma^+$ such that
\begin{equation}\label{def:Gamma}
	    \Gamma^+ \in \mConv{n}, \qquad \tau \bar \Gamma(\cdot)  + (1-\tau) [\ell_f(\cdot;x)+h(\cdot)] \le \Gamma^+(\cdot) \le \phi(\cdot),
	\end{equation}
   where $\ell_f(\cdot;\cdot)$ is as in \eqref{def:gamma} and
   $\bar \Gamma(\cdot) $ is such that
\begin{equation}\label{def:bar Gamma} 
	\bar \Gamma \in \mConv{n}, \quad
 \bar \Gamma(x) = \Gamma(x), \quad 
	x = \underset{u\in \R^n}\argmin \left \{\bar \Gamma (u) + \frac{1}{2\lam} \|u-x^c\|^2 \right\}.
	\end{equation}
     \end{itemize}
     {\bf Output:} $\Gamma^+$.
     
    \noindent\rule[0.5ex]{1\columnwidth}{1pt}

	Clearly, the above update scheme does not completely determine $\Gamma^+$
	but rather gives minimal conditions on it which are suitable for the complexity analysis
	of this paper.
	
	We now describe three concrete
 update schemes (E1), (E2), and (E3) which are special ways
 of implementing BU.
 Unless otherwise stated, it
 is assumed that their input is
 the same as in BU.

	\begin{itemize} 
     \item [(E1)] {\bf one-cut scheme}: 
     This scheme obtains $\Gamma^+$ as in \eqref{eq:affine}.
	It is easy to see that if this update is used recursively
	then $\Gamma$ is always of the form
	\begin{equation}\label{eq:Gamma-form}
	    \Gamma(\cdot) = \sum_{x \in X} \alpha_x \ell_f(\cdot;x)  + h(\cdot)
	\end{equation}
	where $X$ is a finite set in $\dom h$ and
  $\{\alpha_x : x \in X\} \subset \R_{++}$ are scalars such that
	$\sum_{x \in X} \alpha_x =1$.
	
        \item [(E2)] \textbf{two-cuts scheme:}
        For this scheme, it is assumed that
        $\Gamma$
        has the form 
        \begin{equation}\label{def:Gamma-E2}
            \Gamma=\max \{A_f,\ell_f(\cdot;x^-)\}+h
        \end{equation}
        where $h \in \mConv{n}$ and
        $A_f$ is an affine function satisfying $A_f\le f$. In view of \eqref{eq:x-pre},
        it can be shown that there exists
        $\theta \in [0,1]$ such that
        \begin{align}
            &\frac1\lam (x-x^c) + \partial h(x) 
+ \theta \nabla A_f + (1-\theta) f'(x^-) \ni 0, \label{theta1} \\
            &\theta A_f(x) + (1-\theta) \ell_f(x;x^-) = \max \{A_f(x),\ell_f(x;x^-)\}. \label{theta2}
        \end{align}
        The scheme then sets
        \begin{equation}\label{def:Af+}
         A_f^+(\cdot) :=  \theta A_f(\cdot) + (1-\theta) \ell_f(\cdot;x^-)
     \end{equation}
     and outputs the function $\Gamma^+$
      defined as
\begin{equation}\label{eq:G-agg}
		    \Gamma^+(\cdot)  := 
		    \max\{A_f^+(\cdot),\ell_f(\cdot;x)\} + h(\cdot).
	    \end{equation}
        


	    \item [(E3)] \textbf{multiple-cuts scheme:} For this scheme, it is assumed that $\Gamma$ of the form
	    $\Gamma=\Gamma(\cdot;B)$ where
	    $B \subset \R^n$ is a finite set (i.e., the current  bundle set) and $\Gamma(\cdot;B)$ is defined as in \eqref{eq:Gamma-E2}.
	     This scheme 
      chooses the next bundle set $B^+$ so that
	    \begin{equation}\label{eq:C+}
        B(x) \cup \{x\} \subset B^+ \subset B \cup \{x\}
        \end{equation}
        where
        \begin{equation}\label{def:C+}
            B(x) := \{ b \in B : \ell_f(x;b)+h(x) = \Gamma(x) \},
        \end{equation}
         and then
	    output $\Gamma^+ = \Gamma(\cdot;B^+)$.

	\end{itemize}

 It is interesting to note that
 \eqref{eq:G-agg}, \eqref{eq:C+} and the
 definition of $\Gamma^+$ in (E3) imply that
        the  updates $\Gamma^+$ output by
        schemes (E2) and (E3) have the property that $\Gamma^+(\cdot)$ is minorized 
        by $\ell_f(\cdot;x)+h(\cdot)$
        where $x$ is as in \eqref{eq:x-pre}.
        On the other hand,
        $\Gamma^+$ output by (E1)
        does not necessarily has this property.

We now make some remarks to argue that all the update schemes 
above are special implementations of BU.
It can be easily seen that 
the update $\Gamma^+$ in
(E1), together with
 $\bar \Gamma=\Gamma$, satisfies
 \eqref{def:Gamma} and \eqref{def:bar Gamma}, and hence
 that this $\Gamma^+$ is a special way of implementing BU.
 On the other hand, the proofs that
 the updates $\Gamma^+$ of (E2) and (E3) are
 special implementations of BU
 are more involved and are given in
 Propositions \ref{lem:E2} and \ref{lem:E3}, respectively.

	\subsection{The GPB framework}\label{subsec:GPB}
	
This subsection states the GPB framework based on the BU blackbox presented in Subsection \ref{subsec:update}. It also
gives several remarks about GPB and discusses how it relates
to the classical
proximal point method.

Before stating GPB, we first give
a brief description for its
$j$-th iteration.
Given a prox-center $x^c_{j}$,
it attempts to approximately solve
the prox subproblem
\begin{equation}\label{eq:key-prox}
    m_{j}^*:=\min \left\{ \phi(u)+\frac1{2\lam}\|u-x_{j}^c\|^2 : u\in \R^n\right\}
\end{equation}
(according to a certain termination
criterion outlined below)
by computing the exact solution
$x_j$ of 
the  approximate prox subproblem of the
form \eqref{eq:x-pre} with $x^c=x^c_j$ and
with bundle function $\Gamma=\Gamma_j$
obtained for example according to
one of the update schemes (E1), (E2) or (E3) described above.
If it succeeds then 
$x_{j+1}^c$ is set
to be $x_{j}$;
otherwise,
$x_{j+1}^c$ is set to be $x_{j}^c$.
Finally,
$j$ is updated to
$j+1$ and the above iteration is
repeated.

The method outlined above can be viewed as an inexact proximal point method. More specifically,
consecutive iterations
$j$ such that
$x_{j}^c$ remains the same 
approximately solve the prox
subproblem \eqref{eq:key-prox} (which does not depend on $j$).
When that happens at an iteration
$j$, the prox-center for the next iteration $j+1$ is then updated to a new one.

We now describe the aforementioned
termination criterion.
Given $\delta>0$,
it checks whether
$x_{j}$ and the iterate $y_{j}$ defined as 
\begin{equation}\label{eq:yj-intuition}
    y_j \in \Argmin \left\lbrace \phi(x) :
		x \in \{x_0,x_1,\ldots,x_j\}
		\right\rbrace. 
\end{equation}
satisfies
\[
t_j:= \phi(y_j) - \Gamma_j(x_j) - \frac{1}{2\lam} \|x_j-x^c_j\|^2 \le \delta.
\]




We are now ready to state GPB.

	
	
	\noindent\rule[0.5ex]{1\columnwidth}{1pt}
	
	GPB
	
	\noindent\rule[0.5ex]{1\columnwidth}{1pt}
	\begin{itemize}
		\item [0.] Let $ x_0\in \dom h $, $\lambda>0$, $ \bar \varepsilon>0 $ and $\tau \in (0,1)$ be given
		such that
        \begin{equation}\label{rel:tau1} 
	        \frac{\tau}{1-\tau} \ge \frac{8\lam T_{\bar \varepsilon}^2}{(1+\lam\mu)\bar \varepsilon}
	    \end{equation}
		where $T_{\bar \varepsilon}$ is as in \eqref{def:T}, and set  
		$y_0=x_0$, $t_0=0$ and $j=1$;
		\item[1.] if $t_{j-1}\le \bar \varepsilon/2$,
		 then
			perform a {\bf serious update}, i.e., set $x^c_{j} = x_{j-1}$ and find $\Gamma_{j}$ such that
		\begin{equation}\label{ineq:require}
		    \Gamma_{j}\in \mConv{n}, \quad \ell_f(\cdot;x_{j-1})+h \le \Gamma_{j}\le  \phi;
		\end{equation}
		else, perform a {\bf null update}, i.e., set $x^c_{j}= x^c_{j-1}$
  and let $\Gamma_{j}$ be the output of the BU blackbox with input $(\lam,\tau)$ and $(x^c,x,\Gamma) = (x_{j-1}^c, x_{j-1},\Gamma_{j-1})$;
		
	    \item[2.]
	    compute 
		\begin{equation}
	    x_{j} =\underset{u\in \R^n}\argmin
	    \left\lbrace \Gamma_{j}(u) +\frac{1}{2\lam}\|u- x_{j}^c \|^2 \right\rbrace,  \label{def:xj}
	    \end{equation}
		 choose $y_j$ according
   to \eqref{eq:yj-intuition},
		and set  
		\begin{equation}\label{ineq:hpe1}
		    m_{j}=\Gamma_{j}(x_{j}) +\frac{1}{2\lam}\|x_{j}- x_{j}^c \|^2, \qquad t_{j} = \phi(y_{j}) - m_{j};
		\end{equation}
		\item[3.] set $j\leftarrow j+1$ and go to step 1.
	\end{itemize}
	\rule[0.5ex]{1\columnwidth}{1pt}









	An iteration $j$ such that $t_j \le \bar \varepsilon/2$
     is called
	a serious iteration in which
	case $x_j$ (resp., $y_j$) is called a serious iterate
	(resp., auxiliary serious iterate);
	otherwise, $j$ is called a null iteration.
 Let $
 j_1 \le j_2 \le \ldots$ denote the sequence of all serious iterations and
 define the $k$-th cycle ${\cal C}_k$ to be the iterations $j$ such that $j_{k-1}+1 \le j \le j_k$, i.e.,
 \begin{equation}\label{def:Ck}
     {\cal C}_k := \{ j_{k-1}+1,\ldots,j_k\}
 \end{equation}
 where $j_0:=0$.
 Hence, only the last 
 iteration of a cycle (which can be the first one
 if ${\cal C}_k$ contains only one iteration)  
 is serious. 

	We make some basic remarks about GPB.
	First, we refer to it as a framework since it does not completely
	specify how some algorithmic quantities are generated.
	The framework rather gives minimal  conditions on these quantities
	which enables us to establish complexity bounds for all specific instances of it
 in a unified manner.
    Second, in view of \eqref{ineq:require} or the fact that the output of BU satisfies \eqref{def:Gamma}, it follows that
    \begin{equation}\label{phi-property}
        \Gamma_j \le \phi, \quad
    \Gamma_j \in \mConv{n} \quad \forall j \ge 1.
    \end{equation}
    Third,
    in view of the definition of ${\cal C}_k$ and the way
 the prox-center iterates are generated, it is easy to see that for every $k \ge 1$, we have
 \begin{equation}\label{eq:connect}
     x^c_j = x_{j_{k-1}}
 \quad
 \forall j \in {\cal C}_k.
 \end{equation}
 In words, all prox-centers in
 the $k$-th cycle is equal to the most recent serious iterate.
	Fourth, schemes (E1)-(E3)
 in the previous subsection provide three possible concrete ways of implementing
 the BU blackbox in step 1. 
%
	Fifth, although GPB does not specify a termination criterion for the sake
	of shortness, all iteration-complexity bounds established in this paper are
	relative to the effort of obtaining a $\bar \varepsilon$-solution of \eqref{eq:ProbIntro}.
	Finally, although iteration-complexity bounds for GPB can also be established for other termination criteria (see for example
	Section 6 of \cite{liang2020proximal}),
	we have omitted the details of
	their derivation for the sake of
	shortness.

 We now make some observations about possible simple ways of choosing the bundle function $\Gamma_{j}$ in a serious update. Specifically,
 two simple ways are: 1) $\Gamma_{j}=\ell_f(\cdot;x_{j-1})+h$, and 2) $\Gamma_{j}=\max\{\Gamma_{j-1}, \ell_f(\cdot;x_{j-1})+h\}$.
 Moreover, under the assumption that every call to BU during a null update is carried out using
 (E2) (resp., (E3)), another
 way to obtain $\Gamma_{j}$ during a serious update is  to also use update (E2) (resp., (E3)). In view of the observation in the second last paragraph in Subsection \ref{subsec:update}, it follows that the latter
 way yields a bundle function $\Gamma_{j}$
 satisfying \eqref{ineq:require}.

	We now discuss the role played by the parameter $\tau$ of GPB.
	First, $\tau$ is only used in step 1 as input to the BU blackbox to obtain $\Gamma_{j}$.
 Second,
even though the analysis of GPB
depends on a scalar $\tau$
satisfying \eqref{rel:tau1}, the implementations of some specific instances of GPB do not require knowledge of
such $\tau$. For instance,
since the updates (E2) and (E3) do not depend on $\tau$,
the GPB instances
2C-PB and MC-PB do not depend
on $\tau$ either.
	(Recall the meaning of 1C-PB, 2C-PB and MC-PB given in the sentence following (E3) in Section \ref{sec:intro}.)
	Third, the GPB instance
	1C-PB requires a scalar $\tau$ satisfying
 \eqref{rel:tau1} since the
	update (E1) depends on $\tau$ (see \eqref{eq:affine}). 
	Finally, \eqref{rel:tau1} implies that $\tau$ has to be sufficiently close to one
 which, in the context of
 (E1), means that the new bundle $\Gamma_j$ is closer to $\Gamma_{j-1}$
 than the new cut
 $\ell_f(\cdot;x_{j-1}) + h(\cdot)$
 in view of the nature of the one-cut scheme (E1) (see relation \eqref{eq:affine}).

 

	We finally briefly discuss how accurately
 GPB
 solves the prox problem \eqref{eq:key-prox}.
 Since $\Gamma_{j} \le \phi$, it follows from the definition of $m_{j}$ in \eqref{ineq:hpe1} that $m_{j} \le m_{j}^*$, and hence that
 \begin{align}\label{ineq:tilde-tj}
     0 &\le \phi(y_{j})+\frac1{2\lam}\|y_{j}-x_{j}^c\|^2 - m_{j}^* \nn \\
     &\le \phi(y_{j})+\frac1{2\lam}\|y_{j}-x_{j}^c\|^2 - m_{j}
  = t_{j} + \frac{1}{2\lam} \|y_{j}-x_{j}^c\|^2.
 \end{align}
Thus, if ${j}$ is a serious iteration, or equivalently,
	$t_{j}\le \bar \varepsilon/2$, it follows that
	$y_{j}$ is a $\bar \varepsilon_{j}$-solution of \eqref{eq:key-prox}
 where
 \[
 \bar \varepsilon_{j} := \frac{\bar \varepsilon}2 + \frac{1}{2\lam} \|y_{j}-x_{j}^c\|^2.
 \]
	The sequence of consecutive null iterations between two serious ones can be regarded as an iterative procedure to compute the aforementioned $\bar \varepsilon_{j}$-solution. 
 More details of such an interpretation can be found in Subsection 3.1 of \cite{liang2020proximal}.
 
 Observe that even though the right-hand side of \eqref{ineq:tilde-tj} contains two terms,
 our serious step condition used in GPB only checks the magnitude of the first one. It is possible to modify GPB to one whose serious step condition controls
 the magnitude of
 the right-hand side of \eqref{ineq:tilde-tj}. However, since the latter serious step condition is more restrictive, the resulting method will perform more null iterations, and hence its practical performance might not be as good as the one proposed in this paper.

	
We end this subsection
by stating a general complexity
bound which applies to
any GPB variant.
It assumes that
the triple $(M_f, L_f, \mu)$
is known so that
a parameter $\tau$
satisfying \eqref{rel:tau1} can be computed.

	\begin{theorem}\label{thm:bound-cvx}
	    Let universal constant $C >0$, initial point $x_0 \in \dom h$, tolerance $\bar \varepsilon>0$, and
	    instance $(f,f';h)$ of \eqref{eq:ProbIntro}
		satisfying (A1)-(A3) for some parameter triple $(L_f, M_f,\mu) \in \R^3_+$
		be given.
		Then, if $\lam$ satisfies
		\begin{equation}\label{ineq:lam1}
		\frac{\bar \varepsilon}{C (M_f^2+\bar \varepsilon L_f)} \le \lam \le \frac{C d_0^2}{\bar \varepsilon},
		\end{equation}
		and $\tau$ is given by
		\begin{equation}\label{eq:tau}
		    \tau = \left[ 1+\frac{(1+\lam \mu)\bar \varepsilon}{8\lam(M_f^2 + \bar \varepsilon L_f)}\right]^{-1},
		\end{equation}
		then
		any variant of
		GPB with input ($x_0,\lam, \bar \varepsilon,\tau$)
		obtains a $\bar \varepsilon$-solution of the above instance
		in a number of iterations bounded (up to a logarithmic term) by \eqref{eq:bound-mu}.
	\end{theorem}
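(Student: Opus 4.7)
The plan is to decompose the total iteration count into the number of serious iterations and the maximum number of null iterations per cycle, following the organization announced for Section~\ref{sec:analysis}. The first step is to verify that the proposed $\tau$ is admissible for GPB, i.e.\ that $\tau \in [\bar \tau,1)$ with $\bar \tau$ as in \eqref{eq:tau1}. By definition of $(\bar M_f,\bar L_f)$ as the unique minimizer of $M_f^2+\bar\varepsilon L_f$ over the set $\Omega$ of pairs satisfying (A3), one has $T_{\bar\varepsilon}^2 \le M_f^2+\bar\varepsilon L_f$; similarly $\bar\mu\ge\mu$ by definition of $\bar\mu$. These two inequalities imply
\[
    \frac{(1+\lam\mu)\bar\varepsilon}{8\lam(M_f^2+\bar\varepsilon L_f)} \le \frac{(1+\lam\bar\mu)\bar\varepsilon}{8\lam T_{\bar\varepsilon}^2},
\]
so that $\tau\ge\bar\tau$, and GPB with input $(x_0,\lam,\bar\varepsilon,\tau)$ is well-defined.

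Next I would bound the number of serious iterations. At any serious index $j$, the condition $t_j\le\bar\varepsilon/2$ combined with the inequality $m_j\le m_j^\ast\le \phi_j^\lam(y_j)$ noted at the end of Subsection~\ref{subsec:GPB} shows that $y_j$ is a $(\bar\varepsilon/2)$-minimizer of the proximal subproblem $\min_u \phi_j^\lam(u)$, whose objective is $(1/\lam+\mu)$-strongly convex. A standard telescoping/inexact-proximal-point argument, entirely parallel to the CS-CS analysis underlying \eqref{eq:bound}, then yields a bound on the number of serious iterations equal to \eqref{eq:bound-mu} with $\bar\mu$ replaced by $\mu$; the two terms in the $\min$ correspond to the $\mu=0$ sublinear rate $\mathcal{O}(d_0^2/(\lam\bar\varepsilon))$ and to the strongly convex linear rate $\mathcal{O}((1/(\lam\mu)+1)\log(\mu d_0^2/\bar\varepsilon+1))$. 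The stepsize range \eqref{ineq:lam1} feeds directly into this count.

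The main technical obstacle is bounding the number of null iterations between two consecutive serious indices. For a null step, $x^c_{j+1}=x^c_j$, so $\phi_{j+1}^\lam=\phi_j^\lam$ and $\phi_{j+1}^\lam(y_{j+1})\le\phi_j^\lam(y_j)$. Applying the defining inequality \eqref{def:Gamma} at $x_{j+1}$ and adding the prox term gives
\[
    m_{j+1} \ge \tau\bigl[\bar\Gamma_j(x_{j+1})+\tfrac{1}{2\lam}\|x_{j+1}-x^c_{j+1}\|^2\bigr] + (1-\tau)\bigl[\ell_f(x_{j+1};x_j)+h(x_{j+1})+\tfrac{1}{2\lam}\|x_{j+1}-x^c_{j+1}\|^2\bigr].
\]
Strong convexity of $\bar\Gamma_j^\lam$ combined with \eqref{def:bar Gamma} bounds the first bracket from below by $m_j+\frac{1+\lam\mu}{2\lam}\|x_{j+1}-x_j\|^2$, while the hybrid estimate \eqref{ineq:est} bounds the second bracket from below by $\phi_{j+1}^\lam(y_{j+1})-2M_f\|x_{j+1}-x_j\|-\frac{L_f}{2}\|x_{j+1}-x_j\|^2$. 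Combining and maximizing the residual quadratic in $\|x_{j+1}-x_j\|$ yields a recursion
\[
    t_{j+1} \le \tau\, t_j + \frac{2(1-\tau)^2 M_f^2}{\tau(1+\lam\mu)/\lam-(1-\tau)L_f},
\]
and the choice of $\tau$ in \eqref{eq:tau} is precisely calibrated to ensure the additive error is at most a fraction of $\bar\varepsilon$, so $t_j$ decays geometrically with rate $\tau$ down to $\bar\varepsilon/2$.

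Finally, I would combine the two bounds. Using $1-\tau=\Theta((1+\lam\mu)\bar\varepsilon/[\lam(M_f^2+\bar\varepsilon L_f)])$ and the lower bound $\lam(M_f^2+\bar\varepsilon L_f)\ge\bar\varepsilon/C$ from \eqref{ineq:lam1}, each null cycle contains at most $\mathcal{O}(\log(t_{\mathrm{start}}/\bar\varepsilon)/\log(1/\tau)) = \mathcal{O}(1)$ iterations up to the advertised logarithmic factor; the upper bound $\lam\le C d_0^2/\bar\varepsilon$ controls $t_{\mathrm{start}}$ at the outset of each cycle. Multiplying the serious count from the second paragraph by this per-cycle null count delivers \eqref{eq:bound-mu}. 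The delicate part will be the bookkeeping in the null-iteration recursion, where the interplay between the hybrid constants $(M_f,L_f)$ and the strong convexity from $\mu$ must be tracked carefully.
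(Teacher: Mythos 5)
Your first three steps track the paper's own route closely and are sound: the verification that $\tau$ in \eqref{eq:tau} satisfies $\tau\ge\bar\tau$ is exactly the admissibility fact needed for GPB; your serious-iteration bound is the paper's inexact-proximal-point/telescoping argument (Lemma \ref{lem:iterate2} and Proposition \ref{prop:outer}); and your null-step recursion is correct and equivalent to the paper's Lemma \ref{lem:recur}--Lemma \ref{lem:tj} (with the stated $\tau$, your additive term $2(1-\tau)^2M_f^2/[\tau(1+\lam\mu)/\lam-(1-\tau)L_f]$ is indeed at most $(1-\tau)\bar\varepsilon/4$, so you recover $t_{j+1}-\bar\varepsilon/4\le\tau(t_j-\bar\varepsilon/4)$; the paper reaches the same inequality by completing a square rather than maximizing the quadratic).

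The gap is in the final assembly. Your claim that each null cycle contains $\mathcal{O}(\log(t_{\mathrm{start}}/\bar\varepsilon)/\log(1/\tau))=\mathcal{O}(1)$ iterations up to a logarithmic factor is false: $1/\log(1/\tau)\approx 1/(1-\tau)=1+8\lam_\mu(M_f^2+\bar\varepsilon L_f)/\bar\varepsilon$, which for $\lam$ near the top of the range \eqref{ineq:lam1} (and $\mu=0$) is of order $d_0^2(M_f^2+\bar\varepsilon L_f)/\bar\varepsilon^2$, i.e.\ the entire budget \eqref{eq:bound-mu}; moreover the lower bound $\lam(M_f^2+\bar\varepsilon L_f)\ge\bar\varepsilon/C$ bounds $1-\tau$ from \emph{above}, which is the wrong direction for your purpose. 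The correct argument (Proposition \ref{prop:total} and the paper's proof of the theorem) keeps the per-cycle null count as $\mathcal{O}\bigl((1+\lam_\mu(M_f^2+\bar\varepsilon L_f)/\bar\varepsilon)\log(4\bar t/\bar\varepsilon)\bigr)$, multiplies it by the serious count, and exploits the cancellations $\lam_\mu/\lam\le1$ and $\lam_\mu\mu\le1$ so that the product $ab$ is $\lam$-independent; absorbing the additive cross terms then requires a case split on $\mu\lessgtr C(M_f^2+\bar\varepsilon L_f)/\bar\varepsilon^2$, and it is there — not in controlling $t_{\mathrm{start}}$ — that the upper bound $\lam\le Cd_0^2/\bar\varepsilon$ is used (to show the serious count is bounded below by a constant, so that $(a+1)(b+1)=\mathcal{O}(ab+1)$). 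Separately, you assert without proof that $t_{\mathrm{start}}$ is controlled: bounding $t_{\ell_0+1}$ is a genuine technical step (Lemma \ref{lem:t1}, resting on Lemma \ref{lem:consecutive} and the bound $\|\hat x_k-x_0^*\|\le\sqrt2\,d_0$ from Proposition \ref{prop:outer}); it only enters inside the logarithm, but it must be established, and your proposal gives no argument for it.
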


	\subsection{Iteration-complexity results for
 $\tau$-free GPB variants}\label{subsec:results}

This subsection considers the
subclass of GPB  methods,
referred to as the
$\tau$-free GPB subclass,
which do not
depend on $\tau$ (and hence do not need $\tau$
as input),
and derives improved iteration-complexity bounds for it
which follow as immediate consequences of Theorem \ref{thm:bound-cvx}.
Since 2C-PB and MC-PB do not depend on $\tau$,
the results below apply to both of them.



\begin{corollary}\label{cor:E2E3}
	    Let universal constant $C >0$, initial point $x_0 \in \dom h$, and tolerance $\bar \varepsilon>0$ be given, and consider an instance $(f,f';h)$ of \eqref{eq:ProbIntro} satisfying (A1)-(A3).
		Then, any variant of the
		$\tau$-free GPB subclass with
		input $(x_0,\lam,\bar \varepsilon)$ 
		satisfying
		\begin{equation}\label{ineq:lam2}
		    \frac{\bar \varepsilon}{C T_{\bar \varepsilon}^2} \le \lam \le \frac{C d_0^2}{\bar \varepsilon}
		\end{equation}
		where $T_{\bar \varepsilon}$ is as in \eqref{def:T}
		obtains a $\bar \varepsilon$-solution of the above instance
		in a number of iterations bounded (up to a logarithmic term) by \eqref{eq:bound-bar}.
	\end{corollary}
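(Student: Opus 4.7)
The strategy is to derive this corollary as a direct consequence of Theorem~\ref{thm:bound-cvx}, exploiting the freedom in the choice of the parameter triple $(M_f, L_f, \mu)$ afforded by the $\tau$-free assumption. The central observation is that Lemmas~\ref{lem:E2} and~\ref{lem:E3} (invoked by any $\tau$-free variant under consideration, in particular 2C-PB and MC-PB) guarantee that the bundle updates satisfy $\Gamma_{j+1} \in {\cal C}_\phi(\Gamma_j, x_j^c, \lam, \tau)$ for \emph{every} $\tau \in (0,1)$, rather than only for the specific $\tau$ dictated by \eqref{eq:tau}. Consequently, such a variant---whose iterates are produced without reference to $\tau$---can legitimately be regarded, at the level of the analysis, as an instance of the four-argument GPB algorithm for any admissible $\tau$.

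The plan is first to select the parameter triple $(M_f, L_f, \mu) = (\bar M_f, \bar L_f, \bar \mu)$. By definition $(\bar M_f, \bar L_f)$ satisfies (A3), $\bar \mu$ satisfies (A4), and moreover $M_f^2 + \bar \varepsilon L_f = T_{\bar \varepsilon}^2$ by \eqref{def:T}; hence the stepsize range \eqref{ineq:lam1} collapses to \eqref{ineq:lam2}, which is exactly the hypothesis of the present corollary. Next I would define $\tau^\star$ via \eqref{eq:tau} with these parameters substituted---this is precisely the $\bar \tau$ of \eqref{eq:tau1}---and observe that $\tau^\star \in (0,1)$, so that the $\tau$-free variant's iteration trace coincides with that of GPB instantiated with input $(x_0, \lam, \bar \varepsilon, \tau^\star)$. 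Applying Theorem~\ref{thm:bound-cvx} to this instantiation and substituting $(\bar M_f, \bar L_f, \bar \mu)$ into \eqref{eq:bound-mu} produces precisely \eqref{eq:bound-bar}.

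There is no genuine analytical obstacle: once the equivalence between the $\tau$-free variant and a suitably parameterized instance of GPB is established, the corollary reduces to a numerical substitution into the conclusion of Theorem~\ref{thm:bound-cvx}. The only subtlety worth flagging is the distinction between what the algorithm \emph{uses} (it uses no knowledge of $(\bar M_f, \bar L_f, \bar \mu)$) and what the analysis \emph{may assume} (it may freely post-select any triple satisfying (A1)--(A4)). This post-selection is valid precisely because the sequence $\{(x_j, y_j, \Gamma_j)\}$ generated by a $\tau$-free variant does not depend on $\tau$, and hence does not depend on the choice of $(M_f, L_f, \mu)$ used to compute $\tau$ through \eqref{eq:tau}.
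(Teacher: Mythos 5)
Your proposal is correct and follows essentially the same route as the paper: the paper also views a $\tau$-free variant as an instance of GPB with $\tau=\bar\tau$ and $\mu=\bar\mu$ (justified by the variant's independence of these quantities) and then invokes Theorem~\ref{thm:bound-cvx} with $(L_f,M_f,\mu)$ replaced by $(\bar L_f,\bar M_f,\bar\mu)$, under which \eqref{ineq:lam1} becomes \eqref{ineq:lam2} and \eqref{eq:bound-mu} becomes \eqref{eq:bound-bar}. Your additional remarks (that Lemmas~\ref{lem:E2} and~\ref{lem:E3} give membership in ${\cal C}_\phi(\Gamma,x^c,\lam,\tau)$ for every $\tau\in(0,1)$, and that \eqref{eq:tau} with the barred parameters is exactly $\bar\tau$) simply spell out the post-selection step the paper states more tersely.
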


	\begin{proof}
	    Observe that any variant of the $\tau$-free GPB subclass can be viewed as
	    an instance of GPB with
	    input $\tau$ satisfying the equality in \eqref{rel:tau1} since it does not depend on $\tau$.
	    Hence, it follows from \eqref{eq:bound-bar} and
	    Theorem \ref{thm:bound-cvx} 
	   with $(L_f,M_f)$ replaced by $(\bar L_f, \bar M_f)$
	   that the conclusion of the corollary holds.
	\end{proof}
	

Recall that Proposition \ref{lem:hybrid} shows that if $f$ satisfies \eqref{eq:hybrid-holder} then it satisfies (A3) with $(M_f,L_f)=(M_f(\alpha),L_f(\alpha))$. The following result is a consequence of Corollary \ref{cor:E2E3} when condition \eqref{eq:hybrid-holder} holds in place of (A3). We omit its proof since it directly follows from Corollary \ref{cor:E2E3} and \eqref{ineq:T}.

\begin{corollary}\label{cor:holder}
	    Let universal constant $C >0$, initial point $x_0 \in \dom h$, and tolerance $\bar \varepsilon>0$ be given, and consider an instance $(f,f';h)$ of \eqref{eq:ProbIntro} such that (A1), (A2), and condition \eqref{eq:hybrid-holder} hold for some
        quadruple $(M_\nu,L_\nu, \mu, \nu)\in \R^3_+\times (0,1)$.
		Then, any variant of the
		$\tau$-free GPB subclass with
		input $(x_0,\lam,\bar \varepsilon)$ 
		satisfying \eqref{ineq:lam2}
		obtains a $\bar \varepsilon$-solution of the above instance
		in a number of iterations bounded (up to a logarithmic term) by 
        \begin{equation}\label{cmplx:holder}
            {\cal O}\left(\min \left\lbrace \frac{M_\nu^2 d_0^2}{\bar \varepsilon^2} + \left(\frac{L_\nu}{\bar \varepsilon}\right)^{\frac{2}{\nu + 1}} d_0^2, \left(\frac{M_\nu^2}{\mu \bar \varepsilon} + \frac{L_\nu^{\frac{2}{\nu+1}}}{\mu \bar \varepsilon^{\frac{1-\nu}{1+\nu}}} + 1\right) \log\left( \frac{\mu d_0^2}{ \bar \varepsilon} + 1\right) \right\rbrace +1 \right).
        \end{equation}
	\end{corollary}

We now make two remarks about Corollary \ref{cor:holder}.
First, when $M_\nu=0$,
bound \eqref{cmplx:holder}
reduces to
\[
        {\cal O}\left(\min \left\lbrace \left(\frac{L_\nu}{\bar \varepsilon}\right)^{\frac{2}{\nu + 1}} d_0^2, \left( \frac{L_\nu^{\frac{2}{\nu+1}}}{\mu \bar \varepsilon^{\frac{1-\nu}{1+\nu}}} + 1\right) \log\left( \frac{\mu d_0^2}{ \bar \varepsilon} + 1\right) \right\rbrace  +1\right).
        \]
Second, when $\mu=0$,
the above bound
 agrees with
the one obtained  for the primal universal method of \cite{nesterov2015universal} (see (2.20) therein).

   For the sake of comparing the results of this paper with
   the ones obtained in \cite{liang2020proximal}, we now state
   another consequence of Theorem \ref{thm:bound-cvx} in which
   an alternative $\bar \varepsilon$-iteration complexity 
   for $\tau$-free GPB instances applied to instances
   of \eqref{eq:ProbIntro} with $\bar M_{f,0}$ finite.
   (Recall the definition of $\bar M_{f,0}$ is in the line below \eqref{def:T}.)
    
    \begin{corollary}\label{cor:nonsmooth}
    Let universal constant $C >0$, initial point $x_0\in \dom h$, and tolerance $\bar \varepsilon>0$ be given,
    and consider an instance $(f,f';h)$ of \eqref{eq:ProbIntro} such that (A1)-(A3) holds and
    $\bar M_{f,0}$ is finite.
			Then, any variant of the
		$\tau$-free GPB subclass with
		input $(x_0,\lam,\bar \varepsilon)$ 
		satisfying
		\begin{equation}\label{ineq:lam2-M}
		    \frac{\bar \varepsilon}{C (\bar M_{f,0})^2} \le \lam \le \frac{C d_0^2}{\bar \varepsilon},
		\end{equation}
		obtains a $\bar \varepsilon$-solution of the above instance
		in a number of iterations bounded (up to a logarithmic term) by 
		\begin{equation}\label{eq:bound-bar M}
    {\cal O} \left(\min \left\lbrace \frac{(\bar M_{f,0})^2 d_0^2}{\bar \varepsilon^2}, \left(\frac{(\bar M_{f,0})^2}{\mu \bar \varepsilon} + 1\right) \log\left( \frac{\mu d_0^2}{ \bar \varepsilon} + 1\right) \right\rbrace +1 \right).
    \end{equation}
	\end{corollary}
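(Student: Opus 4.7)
The plan is to derive Corollary \ref{cor:nonsmooth} as a direct specialization of Theorem \ref{thm:bound-cvx}, in the same spirit as the proof of Corollary \ref{cor:E2E3}, but with a different choice of parameter triple. The key observation is that, since $\bar M_{f,0}$ is finite by hypothesis, the pair $(\bar M_{f,0},0)$ belongs to the feasible set $\Omega$ consisting of $(M_f,L_f)$ satisfying (A3). Together with $\bar\mu$ from (A4), this gives a valid parameter triple $(L_f,M_f,\mu)=(0,\bar M_{f,0},\bar\mu)$ for the instance, which I would aim to feed into Theorem \ref{thm:bound-cvx}.

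Next, I need to reconcile the fact that Theorem \ref{thm:bound-cvx} requires $\tau$ as input (given by \eqref{eq:tau}) with the fact that $\tau$-free variants do not use $\tau$. Mirroring the proof of Corollary \ref{cor:E2E3}, I would argue that any $\tau$-free GPB variant applied with input $(x_0,\lam,\bar\varepsilon)$ produces the same output as GPB applied with the additional input
\[
\tau=\left[1+\frac{(1+\lam\bar\mu)\bar\varepsilon}{8\lam\bar M_{f,0}^{\,2}}\right]^{-1},
\]
which is exactly the value prescribed by \eqref{eq:tau} at the triple $(0,\bar M_{f,0},\bar\mu)$. The only thing to verify is that this $\tau$ lies in the admissible interval $[\bar\tau,1)$ required by step~0 of GPB; this reduces to the comparison $T_{\bar\varepsilon}^{\,2}\le \bar M_{f,0}^{\,2}$, which is immediate from the defining optimality of $T_{\bar\varepsilon}$ (as the minimum of $M_f^2+\bar\varepsilon L_f$ over $\Omega$) together with the feasibility $(\bar M_{f,0},0)\in\Omega$.

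With these pieces in place, the conclusion follows by direct substitution: under the triple $(0,\bar M_{f,0},\bar\mu)$, the stepsize condition \eqref{ineq:lam1} of Theorem \ref{thm:bound-cvx} specializes to \eqref{ineq:lam2-M}, and the complexity bound \eqref{eq:bound-mu} specializes to \eqref{eq:bound-bar M}, which is exactly the desired conclusion. No genuine obstacle is anticipated; the proof is essentially bookkeeping. The one point worth emphasizing is that one must invoke Theorem \ref{thm:bound-cvx} with the specific pair $(\bar M_{f,0},0)$ rather than the $T_{\bar\varepsilon}$-optimal pair $(\bar M_f,\bar L_f)$ used for Corollary \ref{cor:E2E3}: this is precisely what buys the wider stepsize range \eqref{ineq:lam2-M} (at the cost of the weaker, but in this setting still meaningful, bound \eqref{eq:bound-bar M}).
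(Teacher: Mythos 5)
Your proposal is correct and follows essentially the same route as the paper: view the $\tau$-free variant as GPB with $\tau$ given by \eqref{eq:tau} evaluated at the triple $(0,\bar M_{f,0},\bar\mu)$, then invoke Theorem \ref{thm:bound-cvx} with $(L_f,M_f,\mu)=(0,\bar M_{f,0},\bar\mu)$ so that \eqref{ineq:lam1} and \eqref{eq:bound-mu} specialize to \eqref{ineq:lam2-M} and \eqref{eq:bound-bar M}. Your explicit check that this $\tau$ lies in $[\bar\tau,1)$ via $T_{\bar\varepsilon}\le\bar M_{f,0}$ is a detail the paper leaves implicit (relying on its earlier remark that $\bar M_{f,0}\ge T_{\bar\varepsilon}$), but it is the same argument.
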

    
    \begin{proof}
	    Observe that any variant of the $\tau$-free GPB subclass can be viewed as
	    an instance of GPB with
	    input $\tau=[1+(1+\lam \mu)\bar \varepsilon/(8\lam \bar M_{f,0}^2)]^{-1}$ since it does not depend on $\tau$.
	    Since the pair $(0,\bar M_{f,0})$ satisfies conditions
	    (A1)-(A3), it then follows from \eqref{eq:bound-bar} with $T_{\bar \varepsilon}$ replaced by $\bar M_{f,0}$ and
	    Theorem \ref{thm:bound-cvx} 
	   with $(L_f,M_f)$ replaced by $(0,\bar M_{f,0})$
	   that the conclusion of the corollary holds.
	\end{proof}

    Before comparing the RPB method of \cite{liang2020proximal}
    with the $\tau$-free GPB instances of the paper, we first make two remarks about the first one in regards to the latter ones.
    First, 
    the RPB method of \cite{liang2020proximal} with $\delta=\bar \varepsilon/2$ can be viewed as a special case of $\tau$-free GPB since:
    RPB uses the inequality $t_{j-1}\le \bar \varepsilon/2$ to decide
    whether to
	perform a serious or null update; and,
	its serious and null updates, the
	latter of which are based on (E3), fulfill the requirements of step 1 of GPB (see Lemma \ref{lem:E2}).
	Second, while the RPB method of \cite{liang2020proximal} deals with
	instances of \eqref{eq:ProbIntro} such that $\bar M_{f,0}$ is finite (i.e., the nonsmooth setting),
	the analysis presented in
	this paper for $\tau$-free GPB applies to the
	larger class of
	instances of \eqref{eq:ProbIntro} such that
	$T_{\bar \varepsilon}$ is finite
	(i.e., the hybrid or smooth/nonsmooth setting).

    We now compare Corollary \ref{cor:nonsmooth} of this paper
    with Corollary 3.2 of \cite{liang2020proximal}.
    Indeed, it follows from Corollary 3.2 of \cite{liang2020proximal} with $M_f=\bar M_{f,0}$
    that RPB has $\bar \varepsilon$-iteration complexity given by \eqref{eq:bound-bar M} as long 
    $d_0/\bar M_{f,0} \le \lam \le C d_0^2/ \bar \varepsilon$
    and
    $\mu \le C \bar M_{f,0}/d_0$.
    On the other hand, Corollary \ref{cor:nonsmooth} of this paper establishes complexity bound \eqref{eq:bound-bar M} for any $\lam$ lying in the larger range \eqref{ineq:lam2-M} without imposing any condition on $\mu$.

We now compare Corollary \ref{cor:nonsmooth} of this paper
    with Corollary 3.3 of \cite{liang2020proximal}.
Indeed, it follows from Corollary 3.3 of \cite{liang2020proximal} with $M_f=\bar M_{f,0}$ that
    RPB has $\bar \varepsilon$-iteration complexity $ {\cal O}( (\bar M_{f,0})^2 d_0^2/ \bar \varepsilon^2 +1 )$ as long \eqref{ineq:lam2-M} holds 
    and
    $h$ is $(C \bar M_{f,0})$-Lipschitz continuous.
    On the other hand, Corollary \ref{cor:nonsmooth} of this paper establishes the (possibly sharper)
    $\bar \varepsilon$-iteration complexity \eqref{eq:bound-bar M} for any $\lam$ in the
    same range without imposing any condition Lipschitz continuity on $h$.

     Even though 1C-PB depends on $\tau$,
    it can be easily seen that its iteration-complexity is similar to the one
    of Corollary \ref{cor:E2E3} if
    $\tau$ is close to the one satisfying the equality in \eqref{rel:tau1}.
    Section \ref{sec:adap} describes an adaptive variant of 1C-PB 
    which adaptively chooses
    $\tau=\tau_j$
    such that
    a key condition holds in every iteration $j$
    and which 
    has the same $\bar \varepsilon$-iteration complexity as that of Corollary \ref{cor:E2E3}. 


    

	\section{Complexity Analysis of GPB}\label{sec:analysis}

	This section consists of three subsections.
	The first one provides a bound on the number of serious iterates generated by the GPB framework.
	The second one derives a preliminary complexity bound on the number of possible consecutive null iterates.
	Finally, the last subsection combines the aforementioned bounds to obtain a complexity bound on the total number of iterations performed by any algorithm in the GPB framework with prox stepsize
	$\lam$ arbitrarily chosen.
	Moreover, it also provides the proof of	Theorem \ref{thm:bound-cvx} as a consequence of this general complexity result.

	\subsection{Bounding the number of serious iterates}\label{subsec:serious}
	
	We start by introducing
	some notation and definitions.
	Recall from the paragraph following GPB that $j_1 < j_2 < \ldots$ denote the serious iterations of the GPB framework. Now,
	define $ \hat x_0:=x_0 $, and for every $k \ge 1$, let 
	\begin{align}
		\hat x_k &:= x_{j_k}, \quad \hat y_k := y_{j_k}, \quad \hat \Gamma_k:=\Gamma_{j_k},
		\quad \hat m_k := m_{j_k}. \label{not}
	\end{align}
	
	The following result summarizes the basic properties of the above ``hat'' entities that
	follow as an immediate consequence of their definitions and the description of the
	GPB framework. It is worth noting that the complexity results developed in this subsection
	apply not only to the sequences defined in \eqref{not}, but also to arbitrary sequences
	$\{\hat x_k\}$, $\{\hat y_k\}$ and $\{\hat \Gamma_k\}$ satisfying the basic properties stated below.

    \begin{lemma}\label{lem:iterate}
		The following statements about GPB hold for every $ k\ge 1 $:
		\begin{itemize}
		    \item[a)] $\hat \Gamma_k \in \mConv{n}$ and $\hat \Gamma_k \le \phi$;
			\item[b)]  $(\hat x_k,\hat m_k)$ is the pair of optimal solution and optimal value of
			\[
			\min \left\lbrace \hat \Gamma_k(u) + \frac{1}{2\lam}\|u- \hat x_{k-1} \|^2: u\in\R^n \right\rbrace;
			\]
			\item[c)] 	there holds $\phi(\hat y_k) - \hat m_k \le \bar \varepsilon/2$.
		\end{itemize}
	\end{lemma}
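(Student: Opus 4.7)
All three statements should follow by unpacking the description of GPB from Subsection \ref{subsec:GPB}; there is no substantive inequality to prove, so the plan is essentially a bookkeeping exercise. I expect the only mildly delicate point to be identifying the prox-center $x^c_{j_k}$ used when computing $\hat x_k$ with $\hat x_{k-1}$; once that is in hand, the other two items are immediate.

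For (a), I plan to observe that at every iteration, whether serious or null, step 1 of GPB produces $\Gamma_{j+1} \in {\cal C}_\mu(\phi)$: in the serious branch this is explicitly required, and in the null branch it follows from the design of the class ${\cal C}_\phi(\Gamma_j, x_j^c, \lam, \tau) \subset {\cal C}_\mu(\phi)$ from Subsection \ref{subsec:update}. Specializing to $j+1 = j_k$ yields $\hat \Gamma_k \in {\cal C}_\mu(\phi)$.

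For (b), the key reduction will be $x^c_{j_k} = \hat x_{k-1}$. My argument will be as follows. The serious update at index $j_{k-1}$ sets $x^c_{j_{k-1}+1} = x_{j_{k-1}} = \hat x_{k-1}$; every index in $\{j_{k-1}+1,\ldots,j_k-1\}$ is a null index by definition of $j_k$ as the next serious one, and step 1 preserves the prox-center along each such null iteration. Chaining these equalities gives $x^c_{j_k} = \hat x_{k-1}$. Substituting this identity into the subproblem \eqref{def:xj} used to define $x_{j_k}$, and using the definition of $m_{j_k}$ in \eqref{ineq:hpe1} to identify the optimal value with $\hat m_k$, then delivers the claim directly.

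For (c), I will use the defining property that $t_{j_k} \le \bar \varepsilon/2$ at the serious index $j_k$. Combining this with $\phi^\lam_{j_k}(\hat y_k) \ge \phi(\hat y_k)$ (from the definition \eqref{def:philam} and nonnegativity of the prox term) and $\phi^\lam_{j_k}(\hat y_k) = \hat m_k + t_{j_k}$ (from \eqref{ineq:hpe1}) directly yields the claimed bound $\phi(\hat y_k) - \hat m_k \le \bar \varepsilon/2$. The only real hurdle in the whole argument is tracking indices carefully in (b); once that chain is verified, the rest is a one-line consequence of each cited definition.
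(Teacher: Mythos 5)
Your proposal is correct and follows essentially the same route as the paper: establish $x^c_j=\hat x_{k-1}$ for $j=j_{k-1}+1,\ldots,j_k$ by chaining the serious update at $j_{k-1}$ with the prox-center preservation in null updates, then read off (a) from the definitions of the serious/null updates and of ${\cal C}_\phi\subset{\cal C}_\mu(\phi)$, (b) from \eqref{def:xj} and \eqref{ineq:hpe1} with $j=j_k-1$, and (c) from $t_{j_k}\le\bar\varepsilon/2$ together with dropping the nonnegative prox term in $\phi^\lam_{j_k}$. No gaps.
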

	
    \begin{proof}
        a) This statement follows from 
\eqref{phi-property} and the definition of $\hat \Gamma_k$ in \eqref{not}.
		
		b) It follows from \eqref{def:xj} with $j=j_k$, the first identity in \eqref{ineq:hpe1} with $j=j_k$, and relations \eqref{eq:connect} and \eqref{not}, that b) holds.
		
		c) Since $ j_k $ is a serious iteration, we have that $ t_{j_k}\le \bar \varepsilon/2 $. Using this conclusion, \eqref{not}, and the definition of $t_j$ in \eqref{ineq:hpe1}, we conclude that c) holds.
	\end{proof}
	
	It is worth noting that a), b), and c) can be viewed only as properties about
	the sequences $\{\hat \Gamma_k\}$ and $\{\hat y_k\}$, and the initial point $\hat x_0$, since
	$\{\hat x_k: k \ge 1\}$ is uniquely determined by $\{\hat \Gamma_k\}$.
	
	The next result provides an important recursive formula for the sequences in \eqref{not}
	and derives some important consequences that follow from it.
	
	\begin{lemma}\label{lem:iterate2}
	Let $u \in \dom h$ be given and define
	\begin{equation}\label{eq:lam-mu}
	   \lam_\mu = \frac{\lam}{1+\lam \mu}.
	\end{equation}
	Then, the following statements hold:
	\begin{itemize}
	    \item [a)]
	    for every $k \ge 1$, we have
	    \begin{equation}\label{ineq:sat}
	        \phi(\hat y_k) - \phi(u) \le \frac{1}{2\lam}\|\hat x_{k-1}-u\|^2 - \frac{1}{2\lam_\mu}\|\hat x_{k} - u\|^2 + \frac{\bar \varepsilon}{2};
	    \end{equation}
	    \item[b)] we have $\min_{1\le k \le K} \{ \phi(\hat y_k)-\phi(u)\} \le \bar \varepsilon$ for every index $K$  satisfying
	    \[
		K \ge \min \left\lbrace \frac{\|x_0-u\|^2}{\lam \bar \varepsilon}, \frac{1}{\mu \lam_\mu} \log\left(  \frac{\mu \|x_0-u\|^2}{\bar \varepsilon}+1 \right) \right\rbrace;
		\]
		\item[c)] for every $k \ge 1$, we have $\|\hat x_k-u\|^2 \le \|x_0-u\|^2 +  \lam k \bar \varepsilon$.
	\end{itemize}
	\end{lemma}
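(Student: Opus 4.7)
The plan is to establish part (a) as the fundamental one-step recursion, and then derive (b) and (c) by iterating it in two different ways. For (a), I would exploit strong convexity: since $\hat \Gamma_k \in {\cal C}_\mu(\phi)$ is $\mu$-convex by Lemma~\ref{lem:iterate}(a), the proximal objective $\hat \Gamma_k(\cdot) + \frac{1}{2\lam}\|\cdot - \hat x_{k-1}\|^2$ is $(1/\lam_\mu)$-strongly convex in view of \eqref{eq:lam-mu}. Lemma~\ref{lem:iterate}(b) identifies $\hat x_k$ as its minimizer with optimal value $\hat m_k$, so strong convexity evaluated at an arbitrary $u$ gives $\hat \Gamma_k(u) + \frac{1}{2\lam}\|u-\hat x_{k-1}\|^2 \ge \hat m_k + \frac{1}{2\lam_\mu}\|u-\hat x_k\|^2$. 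Plugging in $\hat \Gamma_k(u) \le \phi(u)$ (built into ${\cal C}_\mu(\phi)$) together with the near-optimality $\hat m_k \ge \phi(\hat y_k) - \bar\varepsilon/2$ from Lemma~\ref{lem:iterate}(c) then produces \eqref{ineq:sat}.

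For (b), I would rewrite (a) as the recursion $\frac{1}{2\lam_\mu}D_k \le \frac{1}{2\lam}D_{k-1} + \bar\varepsilon/2 - [\phi(\hat y_k)-\phi(u)]$, where $D_k := \|\hat x_k-u\|^2$, and split on $\mu$. If $\mu=0$, so $\lam_\mu=\lam$, summing $k=1,\ldots,K$ telescopes the distance term and yields $\min_{1\le k\le K}[\phi(\hat y_k)-\phi(u)]\le D_0/(2\lam K)+\bar\varepsilon/2\le \bar\varepsilon$ whenever $K\ge D_0/(\lam\bar\varepsilon)$. If $\mu>0$, I would argue by contradiction: assuming $\phi(\hat y_k)-\phi(u)>\bar\varepsilon$ for every $k\le K$, the recursion sharpens to $D_k<\rho D_{k-1}-\lam_\mu\bar\varepsilon$ with $\rho:=1/(1+\lam\mu)=\lam_\mu/\lam$; unrolling and using the identity $\lam_\mu/(1-\rho)=1/\mu$ gives $D_K<\rho^K D_0-(\bar\varepsilon/\mu)(1-\rho^K)$, and the constraint $D_K\ge 0$ forces $K<\log(1+\mu D_0/\bar\varepsilon)/\log(1+\lam\mu)$, which together with the elementary bound $\log(1+\lam\mu)\ge \lam\mu/(1+\lam\mu)=\mu\lam_\mu$ contradicts the hypothesis on $K$. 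This $\mu>0$ branch is the main obstacle I anticipate, since the passage from the raw denominator $\log(1+\lam\mu)$ to the cleaner $\mu\lam_\mu$ must be tracked carefully; I expect the two recursive formulas promised in Appendix~\ref{sec:recursive} are tailored for precisely this algebra and would cite them to compress the argument.

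For (c), I would rearrange (a) into $\|\hat x_k-u\|^2\le (\lam_\mu/\lam)\|\hat x_{k-1}-u\|^2 + \lam_\mu\bar\varepsilon - 2\lam_\mu[\phi(\hat y_k)-\phi(u)]$. Using $\lam_\mu/\lam\le 1$, $\lam_\mu\le \lam$, and the fact that $[\phi(\hat y_k)-\phi(u)]\ge 0$ in the regime where (c) will be applied (namely with $u$ an optimal point, so that $\phi(\hat y_k)\ge \phi^*=\phi(u)$), the recursion collapses to $\|\hat x_k-u\|^2\le \|\hat x_{k-1}-u\|^2+\lam\bar\varepsilon$, which telescopes to the desired claim $\|\hat x_k-u\|^2\le \|x_0-u\|^2+\lam k\bar\varepsilon$.
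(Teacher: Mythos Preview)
Your plan matches the paper's proof. Part (a) is identical: strong convexity of $\hat\Gamma_k + \frac{1}{2\lam}\|\cdot - \hat x_{k-1}\|^2$ at its minimizer $\hat x_k$, followed by $\hat\Gamma_k \le \phi$ and Lemma~\ref{lem:iterate}(c). For (b) and (c) the paper does exactly what you anticipate in your parenthetical remark: it casts \eqref{ineq:sat} as the recursion \eqref{eq:easyrecur} with $\eta_k = \phi(\hat y_k) - \phi(u)$, $\alpha_k = \frac{1}{2\lam}\|\hat x_k - u\|^2$, $\theta = 1 + \lam\mu$, $\delta = \bar\varepsilon/2$, and invokes Corollary~\ref{lm:easyrecur}(a) for (b) and Corollary~\ref{lm:easyrecur}(b) for (c).

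Two small remarks on your direct arguments. First, in (b) your case split on $\mu = 0$ versus $\mu > 0$ leaves a hole: when $\mu > 0$ your contradiction bounds $K$ only against the logarithmic term, but the hypothesis is $K \ge \min\{\,\cdot\,,\,\cdot\,\}$, so you must also cover the possibility that the minimum is achieved by the first term $\|x_0-u\|^2/(\lam\bar\varepsilon)$; the fix is immediate, since your telescoping argument for $\mu=0$ works for all $\mu$ once you weaken $1/(2\lam_\mu)$ to $1/(2\lam)$ on the left of the recursion. Second, your restriction in (c) to $u$ with $\phi(\hat y_k) \ge \phi(u)$ is exactly the hypothesis $\eta_j \ge 0$ that Corollary~\ref{lm:easyrecur}(b) imposes, so the paper's own proof carries the identical caveat; since (c) is only ever applied with $u = x_0^*$ in Proposition~\ref{prop:outer}, this is harmless.
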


	
	\begin{proof}
	    a) It follows from Lemma \ref{lem:iterate}(a) that $\hat \Gamma_k$ is $\mu$-convex, and hence that the objective function in Lemma \ref{lem:iterate}(b) is $ (\mu + 1/\lam)$-strongly convex.
		Using this observation, Lemma \ref{lem:iterate}(b) and Theorem 5.25(b) of \cite{beck2017first}
		with $f=\hat \Gamma_k+\|\cdot-\hat x_{k-1}\|^2/(2\lam)$, $x^*=\hat x_k$ and $ \sigma=\mu + 1/\lam $,
		we have for the given $ u\in \dom h $ and every $k \ge 1$,
		\begin{equation}\label{ineq:inter}
		    \hat m_k + \frac1{2}\left( \mu+\frac{1}{\lam}\right) \|u-\hat x_{k}\|^2
		\le \hat\Gamma_k(u) + \frac1{2\lam}\|u-\hat x_{k-1}\|^2.
		\end{equation}
		Using the above inequality and Lemma \ref{lem:iterate}(a) and (c), we
		conclude that 
 \begin{align*}
		    \phi(\hat y_k) &- \phi(u) + \frac1{2}\left( \mu+\frac{1}{\lam}\right)\|\hat x_{k} - u\|^2 
			\le \phi(\hat y_k) - \hat\Gamma_k(u) + \frac1{2}\left( \mu+\frac{1}{\lam}\right)\|\hat x_{k} - u\|^2\\
			&\overset{\eqref{ineq:inter}}{\le} \phi(\hat y_k) - \hat m_k + \frac1{2\lam}\|u-\hat x_{k-1}\|^2
			\le \frac{\bar \varepsilon}2  + \frac{1}{2\lam}\|u-\hat x_{k-1}\|^2
		\end{align*}
		and hence that a) holds.
		
		b)-c) Since \eqref{ineq:sat} is a special case of  inequality \eqref{eq:easyrecur} in which
		\[
		\eta_k=\phi(\hat y_k) - \phi(u), \quad \alpha_k=\frac{1}{2\lam}\|\hat x_{k}-u\|^2, \quad \theta=1+\lam\mu, \quad \delta= \frac{\bar \varepsilon}2,
		\]
	it follows from Corollary \ref{lm:easyrecur}, the fact that $\hat x_0=x_0$ and the definition of $\lam_\mu$ in \eqref{eq:lam-mu} that b) and c) hold.
	\end{proof}

    We are now ready to present the main result of this subsection which provides a
    bound on the number of serious iterates generated by GPB until it obtains
    a $\bar \varepsilon$-solution of \eqref{eq:ProbIntro}.
    	
	\begin{proposition}\label{prop:outer}
	    The number of serious iterations $K$ performed by GPB until it obtains for the
	    first time an auxiliary serious iterate $\hat y_K$ such that $ \phi(\hat y_K)-\phi^*\le \bar \varepsilon $ is bounded by
				\begin{equation}\label{bound:outer}
				    \min \left\lbrace \frac{d_0^2}{\lam \bar \varepsilon}, \frac{1}{\mu \lam_\mu} \log\left(  \frac{\mu d_0^2}{\bar \varepsilon}+1 \right) \right\rbrace + 1
				\end{equation}
			where $\lam_\mu$ is as in \eqref{eq:lam-mu}. Moreover, 
			\begin{equation}\label{ineq:dist}
			    \|\hat x_k-x_0^*\|\le \sqrt{2} d_0 \qquad \forall k \in \{0, 1,\ldots, K-1\}.
			\end{equation}
	\end{proposition}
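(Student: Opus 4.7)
\textbf{Proof plan for Proposition \ref{prop:outer}.} The plan is to apply Lemma~\ref{lem:iterate2} with $u=x_0^*$, where $x_0^*$ is as in \eqref{def:d0}, so that $\|x_0-x_0^*\|=d_0$ and $\phi(x_0^*)=\phi^*$. Both conclusions of the proposition will then follow directly.

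First, I would establish the bound \eqref{bound:outer} on $K$. Lemma~\ref{lem:iterate2}(b) with $u=x_0^*$ states that whenever
\[
K \;\ge\; \min\!\left\{\frac{d_0^{2}}{\lam\bar\varepsilon},\ \frac{1}{\mu\lam_\mu}\log\!\left(\frac{\mu d_0^{2}}{\bar\varepsilon}+1\right)\right\},
\]
there exists some $k\in\{1,\dots,K\}$ with $\phi(\hat y_k)-\phi^* \le \bar\varepsilon$. Hence the smallest integer $K$ for which $\phi(\hat y_K)-\phi^*\le\bar\varepsilon$ holds is bounded by $\lceil \text{threshold}\rceil$, which in turn is bounded by \eqref{bound:outer} after absorbing the ceiling into the additive $+1$.

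Next, I would establish \eqref{ineq:dist}. For $k=0$ we have $\hat x_0=x_0$ and $\|\hat x_0-x_0^*\|=d_0\le\sqrt{2}\,d_0$. For $1\le k\le K-1$, Lemma~\ref{lem:iterate2}(c) with $u=x_0^*$ gives $\|\hat x_k-x_0^*\|^2 \le d_0^2 + \lam k\,\bar\varepsilon$. Since \eqref{bound:outer} in particular implies $K-1\le d_0^2/(\lam\bar\varepsilon)$ (the first term of the minimum is always an upper bound on the minimum itself), we obtain $\lam k\,\bar\varepsilon\le\lam(K-1)\bar\varepsilon\le d_0^2$, hence $\|\hat x_k-x_0^*\|^2\le 2d_0^2$, which is \eqref{ineq:dist}.

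The main subtlety, rather than any serious obstacle, is careful bookkeeping of the integer-valued index $K$: the ``$+1$'' in \eqref{bound:outer} comes from converting the real-valued threshold supplied by Lemma~\ref{lem:iterate2}(b) into an integer iteration count, and the proof of \eqref{ineq:dist} exploits the fact that the first term in the $\min$ defining \eqref{bound:outer} is precisely the quantity needed to offset the growth term $\lam k\,\bar\varepsilon$ in Lemma~\ref{lem:iterate2}(c). Once these two observations are in place, the proposition is a direct consequence of Lemma~\ref{lem:iterate2}.
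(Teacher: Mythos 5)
Your proposal is correct and follows essentially the same route as the paper: apply Lemma \ref{lem:iterate2}(b) with $u=x_0^*$ (together with \eqref{def:d0}) to get \eqref{bound:outer}, deduce $K-1\le d_0^2/(\lam\bar\varepsilon)$, and combine this with Lemma \ref{lem:iterate2}(c) at $u=x_0^*$ to obtain \eqref{ineq:dist}. Your extra bookkeeping about the ceiling and the $+1$ only makes explicit what the paper leaves implicit.
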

	
	\begin{proof}
    Lemma \ref{lem:iterate2}(b) with $u=x_0^*$ and the definition of $d_0$ in \eqref{def:d0} imply the first conclusion of the proposition, and hence that $K-1\le d_0^2/(\lam \bar \varepsilon)$.
    This conclusion, together with Lemma \ref{lem:iterate2}(c) with $u=x_0^*$, then implies \eqref{ineq:dist}.
	\end{proof}
	
	We note that Proposition \ref{prop:outer} holds for any $\lam>0$.
	
	\subsection{Bounding the number of consecutive null iterates}\label{subsec:null}
	
	Our goal in this subsection is to show that the set ${\cal C}_k$ is finite and also to provide a bound on its cardinality in terms of $\bar M_f$, $\bar L_f$, $\lam$, $\bar \varepsilon$, $d_0$, and $\tau $.
	
	We start by noting that \eqref{eq:connect}, the definition of $m_j$  in \eqref{ineq:hpe1}, and  the first identity in \eqref{not}, imply that
	\begin{equation}\label{basic obs ii}
		m_j = 
		\Gamma_{j}(x_j) +\frac{1}{2\lam}\|x_j- \hat x_{k-1} \|^2 \quad \forall j \in {\cal C}_k.
	\end{equation}

	
	The first result below describes some
 basic properties of a
 sequence of auxiliary bundle functions $\{\bar \Gamma_j\}$ whose existence is guaranteed by the nature of the BU blackbox.
	
	\begin{lemma}\label{lem:101}
	    For every $j \in {\cal C}_k\setminus \{j_k\}$, the following statements hold:
	    \begin{itemize}
	        \item[a)] there exists function $\bar \Gamma_{j}(\cdot)$ such that
	        \begin{align}
	            &\tau \bar \Gamma_{j}(\cdot) + (1-\tau) [\ell_f(\cdot;x_j)+h(\cdot)] \le \Gamma_{j+1}(\cdot) \le \phi(\cdot), \label{eq:Gamma_j} \\
	            &\bar \Gamma_{j} \in \mConv{n}, \quad \bar \Gamma_{j}(x_j) = \Gamma_{j}(x_j), \quad 
	        x_j = \underset{u\in \R^n}\argmin \left \{\bar \Gamma_{j} (u) + \frac{1}{2\lam} \|u-\hat x_{k-1}\|^2 \right\}; \label{eq:relation}
	        \end{align} 
	        \item[b)] if $\lam_\mu$ is as in \eqref{eq:lam-mu}, then for every $u\in \R^n$, we have
       \begin{equation}\label{ineq:Gammaj}
           \bar \Gamma_{j}(u) + \frac1{2\lam}\|u-\hat x_{k-1}\|^2\ge m_j+ \frac1{2 \lam_\mu}\|u-x_j\|^2 .
       \end{equation}
	    \end{itemize}
	\end{lemma}
	\begin{proof}
	    a) This statement immediately follows from \eqref{def:Gamma}, \eqref{def:bar Gamma}, and the facts that $\Gamma_{j+1}$ is the output of the BU blackbox with input $(\lam,\tau)$ and $(x^c,x,\Gamma) = (x_{j}^c, x_j,\Gamma_j)$ (see the null update in step 1 of GPB) and $x_{j}^c=\hat x_{k-1}$. 
	    
	    b) It follows from $\bar \Gamma_{j}\in \mConv{n}$
     and the definition of $\lam_\mu$ in \eqref{eq:lam-mu} that $\bar \Gamma_{j} + \|\cdot-\hat x_{k-1}\|^2/(2\lam) $ is $(\lam_\mu^{-1})$-strongly convex. 
	    Using the second identity in \eqref{eq:relation} and Theorem 5.25(b) of \cite{beck2017first} with $ f=\bar \Gamma_{j} + \|\cdot-\hat x_{k-1}\|^2/(2\lam) $, $ x^*=x_j $ and $ \sigma= \lam_\mu^{-1} $, we have for every $u\in\dom h$,
	    \[
	    \bar \Gamma_{j}(u) + \frac{1}{2\lam} \|u-\hat x_{k-1}\|^2  \ge \bar \Gamma_{j}(x_j) + \frac{1}{2\lam} \|x_j-\hat x_{k-1}\|^2  + \frac{1}{2 \lam_\mu}\|u-x_j\|^2.
	    \]
	    The statement follows from the above inequality, the first identity in \eqref{eq:relation}, and relation \eqref{basic obs ii}.
	\end{proof}
	
	The following technical result provides an important recursive formula for $\{m_j\}$
	which is used in Lemma \ref{lem:tj} to give
	a recursive formula for $\{t_j\}$.
	It is worth observing that its proof
	uses for the first time the condition
	 \eqref{rel:tau1}.

	\begin{lemma}\label{lem:recur}
		Suppose \eqref{rel:tau1} holds, then for every $j\in {\cal C}_k\setminus \{j_k\}$, 
        we have 
		\begin{equation}\label{ineq:mj}
		    m_{j+1} \ge \tau m_j + (1-\tau) \left[ \ell_f(x_{j+1};x_j) + h(x_{j+1}) + \left(\frac{\bar L_f}2+\frac{4\bar M_f^2}{\bar \varepsilon}\right) \|x_{j+1}-x_j\|^2 \right].
		\end{equation}
	\end{lemma}
    
	\begin{proof}
First, it immediately follows from \eqref{rel:tau1} and the definitions of $T_{\bar \varepsilon}$ and $\lam_\mu$ in \eqref{def:T} and \eqref{eq:lam-mu}, respectively, that 
	    \begin{equation}\label{rel:tau} 
	        \frac{\tau}{1-\tau} \ge \frac{8\lam(\bar M_f^2 + \bar \varepsilon \bar L_f)}{(1+\lam\mu)\bar \varepsilon} \ge \lam_{\mu} \left(\bar L_f + \frac{8\bar M_f^2}{\bar \varepsilon}\right).
	    \end{equation}
		Using \eqref{basic obs ii}, \eqref{eq:Gamma_j}, the fact that $\tau<1$, and \eqref{ineq:Gammaj} with $u=x_{j+1}$, we have
		\begin{align*}
		m_{j+1} & \overset{\eqref{basic obs ii}}{=} \Gamma_{j+1}(x_{j+1}) + \frac{1}{2\lam} \|x_{j+1}-\hat x_{k-1}\|^2\\
		&\overset{\eqref{eq:Gamma_j}}{\ge} (1-\tau) [\ell_f(x_{j+1};x_j) + h(x_{j+1})] + \tau \left( \bar \Gamma_{j}(x_{j+1}) + \frac{1}{2\lam} \|x_{j+1}-\hat x_{k-1}\|^2 \right) \\
		& \overset{\eqref{ineq:Gammaj}}{\ge} (1-\tau) [\ell_f(x_{j+1};x_j) + h(x_{j+1})] + \tau \left( m_j + \frac{1}{2 \lam_\mu} \|x_{j+1} -x_j\|^2 \right)
		\end{align*}
		which, together with \eqref{rel:tau}, implies \eqref{ineq:mj}.
	\end{proof}
	
	The next result,
 which
	plays an important role in the analysis
	of the null iterates,
 establishes
	a key recursive formula for the sequence $\{t_j\}$ defined in \eqref{ineq:hpe1}.
	
	\begin{lemma}\label{lem:tj}
	    For every  $j\in {\cal C}_k\setminus \{j_k\}$, we have
     \begin{equation}\label{ineq:tj-recur}
         t_{j+1}-\frac{\bar \varepsilon}4 \le \tau \left(t_j -\frac{\bar \varepsilon}4\right).
     \end{equation}
	\end{lemma}
	
	\begin{proof} 
        Using \eqref{ineq:est} with $(M_f,L_f,x,y)=(\bar M_f, \bar L_f, x_{j+1},x_j)$ and the fact that $\phi=f+h$, we have
        \begin{equation}\label{ineq:ellphi}
            \ell_f(x_{j+1};x_j) + h(x_{j+1}) + \frac{\bar L_f}2 \|x_{j+1}-x_j\|^2\ge \phi(x_{j+1}) - 2 \bar M_f \|x_{j+1}-x_j\|.
        \end{equation}
        This inequality and \eqref{ineq:mj} imply that
		\begin{align}
			m_{j+1} - \tau m_j
			&\overset{\eqref{ineq:mj}}{\ge} (1-\tau) \left[ \ell_f(x_{j+1};x_j) + h(x_{j+1}) +   \left(\frac{\bar L_f}2 +\frac{4\bar M_f^2}{\bar \varepsilon}\right) \|x_{j+1}-x_j\|^2 \right] \nn \\
			&\overset{\eqref{ineq:ellphi}}{\ge} (1-\tau) \phi(x_{j+1}) + \frac{1-\tau}{\bar \varepsilon}\left(4\bar M_f^2\|x_{j+1} -x_j\|^2 - 2\bar M_f \bar \varepsilon\|x_{j+1} -x_j\|\right) \nn \\
			&\ge   (1-\tau) \phi(x_{j+1}) -  \frac{(1-\tau)\bar \varepsilon}{4}, \label{ineq:mj2}
		\end{align}		
		where the last inequality is due to the inequality
		$ a^2-2ab \ge - b^2$ with $a=2\bar M_f\|x_{j+1}-x_j\|$ and $b=\bar \varepsilon/2$.
		Using the above inequality and the definitions of $y_{j+1}$ and $ t_{j+1} $ in \eqref{eq:yj-intuition} and \eqref{ineq:hpe1}, respectively, we conclude that
        \begin{align*}
            t_{j+1} &\overset{\eqref{ineq:hpe1}}{=} \phi(y_{j+1}) - m_{j+1} \overset{\eqref{ineq:mj2}}{\le} \phi(y_{j+1}) -\tau m_j - (1-\tau) \phi(x_{j+1}) + \frac{(1-\tau)\bar \varepsilon}{4} \\
            &\overset{\eqref{ineq:hpe1}}{=} \phi(y_{j+1}) -\tau [\phi(y_j)-t_j] - (1-\tau) \phi(x_{j+1}) + \frac{(1-\tau)\bar \varepsilon}{4} \\
            &\overset{\eqref{eq:yj-intuition}}{\le} \tau t_j + \frac{(1-\tau)\bar \varepsilon}{4},
        \end{align*}
  and that the lemma holds.
	\end{proof}
	

	

	The next lemma gives a uniform bound on $t_{j_{k}+1}$ which is used in Proposition \ref{prop:null-strong} to derive a uniform bound on the maximum number of consecutive null iterates generated by GPB.
	Its proof uses Lemma \ref{lem:consecutive} in Appendix \ref{sec:technical}
	where a crucial bound on $\|x_{j_{k}+1}-\hat x_{k}\| = \|x_{j_{k}+1}-x_{j_{k}}\|$ is obtained.

    \begin{lemma}\label{lem:t1} 
    For every $k\ge 0$, we have $ t_{j_k+1}\le \bar t$
		where	\begin{equation}\label{def:bar t}
		    \bar t:= \bar M_f^2 + 4(\bar L_f+2)(\max\{1,2\lam \bar L_f\}d_0 + \lam \bar M_f)^2.
		\end{equation}
	\end{lemma}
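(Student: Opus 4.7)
The plan is to bound $t_{\ell_0+1}$ directly in terms of the quantity $\|x_{\ell_0+1}-x_{\ell_0}\|$, and then to invoke the auxiliary Lemma~\ref{lem:consecutive} from Appendix~\ref{sec:technical} to control that distance.

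First, since $\ell_0$ is a serious index, step~1 of GPB guarantees that $\Gamma_{\ell_0+1} \ge \ell_f(\cdot;x_{\ell_0}) + h = \ell_\phi(\cdot;x_{\ell_0})$, and step~2 sets $x^c_{\ell_0+1}=x_{\ell_0}$. Evaluating the bundle function plus prox term at $x_{\ell_0+1}$ and using \eqref{ineq:hpe1} yields
\[
m_{\ell_0+1} = \Gamma^\lam_{\ell_0+1}(x_{\ell_0+1}) \ge \ell_\phi(x_{\ell_0+1};x_{\ell_0}) + \frac{1}{2\lam}\|x_{\ell_0+1}-x_{\ell_0}\|^2.
\]
On the other hand, the definition of $y_{\ell_0+1}$ in \eqref{def:txj} implies $\phi^\lam_{\ell_0+1}(y_{\ell_0+1}) \le \phi^\lam_{\ell_0+1}(x_{\ell_0+1}) = \phi(x_{\ell_0+1}) + \|x_{\ell_0+1}-x_{\ell_0}\|^2/(2\lam)$. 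The prox terms cancel when we subtract, so
\[
t_{\ell_0+1} \le \phi(x_{\ell_0+1}) - \ell_\phi(x_{\ell_0+1};x_{\ell_0}) = f(x_{\ell_0+1}) - \ell_f(x_{\ell_0+1};x_{\ell_0}).
\]

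Next, invoking inequality \eqref{ineq:est} with $(M_f,L_f)=(\bar M_f,\bar L_f)$ and $(u,v)=(x_{\ell_0+1},x_{\ell_0})$, together with Young's inequality $2\bar M_f\|x_{\ell_0+1}-x_{\ell_0}\| \le \bar M_f^2 + \|x_{\ell_0+1}-x_{\ell_0}\|^2$, gives
\[
t_{\ell_0+1} \le \bar M_f^2 + \left(1+\frac{\bar L_f}{2}\right)\|x_{\ell_0+1}-x_{\ell_0}\|^2 \le \bar M_f^2 + 4(\bar L_f+2)\|x_{\ell_0+1}-x_{\ell_0}\|^2,
\]
where the crude factor $4(\bar L_f+2)$ is used to match the form of $\bar t$ in \eqref{def:bar t}.

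It remains to bound $\|x_{\ell_0+1}-x_{\ell_0}\|$ by $\max\{1,2\lam\bar L_f\}d_0 + \lam \bar M_f$. This is exactly the content of Lemma~\ref{lem:consecutive} from the appendix, which uses that $x^c_{\ell_0+1}=x_{\ell_0}$ lies within distance $\sqrt{2}d_0$ of $x_0^*$ (by \eqref{ineq:dist} in Proposition~\ref{prop:outer}) together with the optimality condition for the prox subproblem \eqref{def:xj} and the hybrid growth condition (A3). This step is the main obstacle in the proof: it requires showing that one proximal step starting from a point at distance $O(d_0)$ of the optimal set produces an iterate at a controlled distance from the prox-center, despite having only a crude bundle model at the start of the consecutive null block. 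Once that bound is in hand, substituting it into the display above yields $t_{\ell_0+1} \le \bar t$ as defined in \eqref{def:bar t}, completing the proof.
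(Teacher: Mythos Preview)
Your approach is essentially the paper's own: bound $t_{\ell_0+1}$ by $f(x_{\ell_0+1})-\ell_f(x_{\ell_0+1};x_{\ell_0})$, apply \eqref{ineq:est} and Young's inequality, then invoke Lemma~\ref{lem:consecutive}. The structure is correct.

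There is, however, a small bookkeeping slip in how you track the constants. You inflate $(1+\bar L_f/2)$ to $4(\bar L_f+2)$ and call this a ``crude factor'', then claim Lemma~\ref{lem:consecutive} bounds $\|x_{\ell_0+1}-x_{\ell_0}\|$ by $\max\{1,2\lam\bar L_f\}d_0+\lam\bar M_f$. That is not what the lemma says: it gives
\[
\|x_{\ell_0+1}-x_{\ell_0}\| \le 2\sqrt{2}\bigl(\max\{1,2\lam\bar L_f\}d_0+\lam\bar M_f\bigr),
\]
with an extra factor $2\sqrt{2}$. The factor $4(\bar L_f+2)$ in $\bar t$ is not an arbitrary inflation but arises precisely as $(1+\bar L_f/2)\cdot(2\sqrt{2})^2 = 4(\bar L_f+2)$. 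If you both inflate the coefficient \emph{and} use the correct form of Lemma~\ref{lem:consecutive}, you overshoot $\bar t$ by a factor of $8$. So drop the artificial inflation step, quote Lemma~\ref{lem:consecutive} with its $2\sqrt{2}$, and the arithmetic closes exactly.
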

	
	\begin{proof}
	    Using both \eqref{eq:yj-intuition} and \eqref{ineq:hpe1} with $j=j_k+1$, relation \eqref{basic obs ii}, and the facts that $\phi=f+h$ and $ \Gamma_{j_k+1}\ge \ell_f(\cdot;x_{j_k})+h $ (see the serious update in step 1 of GPB), we have
	    \begin{align*}
	        t_{j_k+1} &\overset{\eqref{ineq:hpe1}}{=} \phi(y_{j_k+1}) - m_{j_k+1} \overset{\eqref{eq:yj-intuition}, \eqref{basic obs ii}}{\le} \phi(x_{j_k+1}) - \Gamma_{j_k+1}(x_{j_k+1})
	        \le f(x_{j_k+1}) - \ell_f(x_{j_k+1};x_{j_k}) \\
	        &\overset{\eqref{ineq:est}}{\le} 2\bar M_f \|x_{j_k+1}-x_{j_k}\| + \frac{\bar L_f}{2} \|x_{j_k+1}-x_{j_k}\|^2
	        \le \bar M_f^2 + \left( \frac{\bar L_f}{2}+1\right)\|x_{j_k+1}-x_{j_k}\|^2
	    \end{align*}
		where the third inequality is due to \eqref{ineq:est} with $(M_f, L_f, x, y)=(\bar M_f, \bar L_f, x_{j_k+1}, x_{j_k})$, and the last inequality is due to the fact that $2ab \le a^2+b^2$ for every $a,b \in \R$.
		The conclusion of the lemma now follows from the above inequality and Lemma \ref{lem:consecutive} in Appendix \ref{sec:technical}.
	\end{proof}
    
    We are now ready to present the main result of this subsection where a bound on $|B(\ell_0)|$ is obtained in terms of $\tau$, $\bar t$ and $\bar \varepsilon$.

	\begin{proposition}\label{prop:null-strong} 
		The set ${\cal C}_k$ is finite and 
	    \begin{equation}\label{eq:inner1}
	        |{\cal C}_k| \le \frac{1}{1-\tau} \log\left( \frac{4 \bar t}{\bar \varepsilon}\right) + 1
	    \end{equation}
	    where $\bar t$ is as in \eqref{def:bar t} and $\tau$ is as in step 0 of GPB. In particular, if $\tau$ is as in \eqref{eq:tau}, then
        \begin{equation}\label{eq:inner2}
            |{\cal C}_k| \le \left( 1 + \frac{8\lam_\mu(M_f^2 + \bar \varepsilon L_f)}{\bar \varepsilon} \right) \log\left( \frac{4\bar t}{\bar \varepsilon}\right) + 1.
        \end{equation}
	\end{proposition}
	\begin{proof}
	    Using the inequality $\tau \le e^{\tau-1}$,
	    and  Lemmas \ref{lem:tj} and \ref{lem:t1}, we then conclude that for every $j \in {\cal C}_k$,	
		\[
		t_j - \frac{\bar \varepsilon}{4}\le \tau^{j-j_{k-1}-1}\left(t_{j_{k-1}+1} - \frac{\bar \varepsilon}{4} \right) \le \tau^{j-j_{k-1}-1}t_{j_{k-1}+1} \le e^{(\tau-1) (j-j_{k-1}-1)} \bar t. 
		\]
		Using this observation, and noting that
		step 1 of GPB and
		the definition of ${\cal C}_k$ imply that
		$t_j > \bar \varepsilon/2$
		for every $j \in {\cal C}_k \setminus \{j_k\}$, it is now easy to see that \eqref{eq:inner1} follows.
        Since $\tau$ as in \eqref{eq:tau} satisfies \eqref{rel:tau1}, it immediately follows that \eqref{eq:inner2} holds in view of \eqref{eq:tau} and \eqref{eq:inner1}.
    \end{proof}

	\subsection{The total iteration-complexity of GPB}
	
	This subsection establishes the total iteration-complexity of GPB.
	
	
	We start by providing a more general version
	of Theorem \ref{thm:bound-cvx} which does not impose any
	condition on $\lam$.

	\begin{proposition}\label{prop:total}
	Let $(x_0,\lam, \bar \varepsilon) \in \dom h \times \R_{++} \times \R_{++} $ and $\tau$ as in \eqref{eq:tau} be given.
		Then, any variant of
		GPB with input ($x_0,\lam, \bar \varepsilon,\tau$)
		obtains a $\bar \varepsilon$-solution of \eqref{eq:ProbIntro}
		in a number of iterations bounded by
	\begin{equation}\label{cmplx:total-strong}
		\left[ \left( 1 + \frac{8\lam_\mu(M_f^2 + \bar \varepsilon L_f)}{\bar \varepsilon} \right) \log\left( \frac{4\bar t}{\bar \varepsilon}\right) + 1 \right] \left[ \min \left\lbrace \frac{d_0^2}{\lam \bar \varepsilon}, \frac{1}{\mu \lam_\mu} \log\left(  \frac{\mu d_0^2}{\bar \varepsilon}+1 \right) \right\rbrace  + 1\right]
	\end{equation}
    where $\bar t$ is as in \eqref{lem:t1}.
	\end{proposition}
	
	\begin{proof}
	    This proposition is a direct consequence of Propositions \ref{prop:outer} and \ref{prop:null-strong}.
	\end{proof}
	
	
	Since $\tau$-free GPB instances do not depend on $\tau$, we can choose $\tau$ as in \eqref{eq:tau} with $(M_f,L_f)$ replaced by $(\bar M_f,\bar L_f)$.
	Hence, the $\bar \varepsilon$-iteration complexity for $\tau$-free GPB instances is \eqref{cmplx:total-strong} with $M_f^2 + \bar \varepsilon L_f$ replaced by $T_{\bar \varepsilon}$.
	
	Proposition \ref{prop:total} allows us to make one additional remark about Theorem \ref{thm:bound-cvx},
	namely, in
	the unusual case where the range of $\lam$ \eqref{ineq:lam2} is empty, i.e., $C^2 (M_f^2 + \bar \varepsilon L_f) d_0^2/\bar \varepsilon^2 < 1$, it can be easily seen that \eqref{cmplx:total-strong}, up to a logarithmic term, reduces to
	${\cal O}([\kappa + 1][C^{-2}\kappa^{-1}+1])$
	where $\kappa := \lam (M_f^2 + \bar \varepsilon L_f)/\bar \varepsilon$. Hence, the $\bar \varepsilon$-iteration complexity of GPB with $\lam = \bar \varepsilon/[C (M_f^2 + \bar \varepsilon L_f)]$
	becomes ${\cal O}((1+C^{-1})^2)$, which shows that the instances of \eqref{eq:ProbIntro} for which  \eqref{ineq:lam2} does not hold
	can be trivially solved by GPB with a proper choice of the prox stepsize.

	We are now ready to prove Theorem \ref{thm:bound-cvx}.
	



	
	
	
	\noindent
	{\bf Proof of Theorem \ref{thm:bound-cvx}}
	Defining
	\begin{equation}\label{def:ab}
	    a= \frac{\lam_\mu (M_f^2 + \bar \varepsilon L_f)}{\bar \varepsilon}, \quad 
	b= \min \left\lbrace \frac{d_0^2}{\lam \bar \varepsilon}, 
	\frac{1 }{\mu \lam_\mu} \log\left(  \frac{\mu d_0^2}{\bar \varepsilon} +1 \right) \right\rbrace,
	\end{equation}
	and using \eqref{cmplx:total-strong}, we conclude that
	$ {\cal O}((a+1)(b+1)) $ 
	is a $ \bar \varepsilon $-iteration complexity bound for GPB up to a logarithmic term.
	We break the proof into two cases: 1) $\mu \le C(M_f^2+\bar \varepsilon L_f)/\bar \varepsilon^2$; and 2) $\mu \ge C(M_f^2+\bar \varepsilon L_f)/\bar \varepsilon^2$.
	
	First, assume that case 1 holds. Using the definition of $\lam_\mu$ in \eqref{eq:lam-mu}, the fact that $\mu \le C(M_f^2+\bar \varepsilon L_f)/\bar \varepsilon^2$, and the first inequality in \eqref{ineq:lam1}, we have 
	\begin{equation}\label{ineq:lammu1}
	    \frac 1{\lam_\mu} = \frac 1{\lam} + \mu \le \frac{2C (M_f^2 + \bar \varepsilon L_f)}{\bar \varepsilon},
	\end{equation}
	and hence $ a\ge 1/(2C) $.
	Moreover, it follows from the definition of $b$ in \eqref{def:ab} and the second inequality in \eqref{ineq:lam1} that
	\begin{equation}\label{ineq:b}
		b\ge \min \left\lbrace \frac1C \, ,  \,
		\frac{1 }{\mu \lam_\mu } \log\left(  \frac{\lam \mu}{C} +1 \right) \right\rbrace.    
	\end{equation}
	Using the fact
	that $\log(1+t) \ge t/(1+t)$ for every $t>0$,
	we easily see that
	$\log(1+t) \ge t/2$ if
	$t \le 1$ and
	$\log(1+t) \ge \log 2>0$ if $t \ge 1$. 
	This observation with $t=\lam \mu/C$ and the definition of $\lam_\mu$ in \eqref{eq:lam-mu} then imply that
	\[
	\frac{1}{\mu\lam_\mu} \log\left(  \frac{\lam \mu}{C} +1 \right) 
	\ge \min \left \lbrace \frac{\lam}{2 \lam_\mu C} \, , \, \left( 1+ \frac{1}{\lam \mu} \right) \log2 \right \rbrace 
	\ge \min \left \lbrace \frac{1}{2 C} \, , \, \log2 \right \rbrace,
	\]
	and hence that
	$b \ge \min\{ 1/(2C), \log 2 \}$.	
	This inequality and the fact that $ a\ge 1/(2C) $ imply that
	$ {\cal O}((a+1)(b+1)) $
	is equal to
	${\cal O}(ab+1)$.
	Using this observation, the definitions
	of $a$ and $b$ in \eqref{def:ab}, and the fact that $\lam_\mu \le \lam$,
	we then conclude that
	the bound 
	$ {\cal O}((a+1)(b+1)) $
	reduces to \eqref{eq:bound-mu}, 
	and hence that the theorem holds for case 1.
	
	Assume now that case 2 holds. Then, it follows from the definition of $\lam_\mu$ in \eqref{eq:lam-mu} and the first inequality in \eqref{ineq:lam1} that 
	\begin{equation}\label{ineq:lammu}
	   \frac{1}{\lam_\mu} =
	\mu + \frac{1}{\lam}\ge \mu \ge \frac{C(M_f^2+\bar \varepsilon L_f)}{\bar \varepsilon},
	\quad \lam \mu \ge 1. 
	\end{equation}
	The first inequality then implies that
	$a \le 1/C$ in view of the first identity in \eqref{def:ab}, and hence
	that
	$ {\cal O}((a+1)(b+1)) $ is ${\cal O}(b+1)$. We will now  derive a bound on $b$.
	Indeed, using the definitions of $b$ and $\lam_\mu$ in \eqref{def:ab} and \eqref{eq:lam-mu}, respectively, we have
	\begin{equation}\label{ineq:b2}
	    b=\min \left\lbrace \frac{d_0^2}{\lam \bar \varepsilon}, 
	\left( 1+\frac{1}{\lam \mu} \right) \log\left(  \frac{\mu d_0^2}{\bar \varepsilon} +1 \right) \right\rbrace 
	\le \min \left\lbrace \frac{C(M_f^2 + \bar \varepsilon L_f) d_0^2}{\bar \varepsilon^2} \,,\,
	2 \log\left(  \frac{\mu d_0^2}{\bar \varepsilon} +1 \right) \right\rbrace
	\end{equation}
	where the inequality is due to the second inequality in \eqref{ineq:lammu} and the first inequality in \eqref{ineq:lam1}.
	Hence, the bound ${\cal O}(b+1)$ becomes
	\[
	{\cal O} \left( \min \left\lbrace \frac{(M_f^2 + \bar \varepsilon L_f) d_0^2}{\bar \varepsilon^2}, 
	 \log\left(  \frac{\mu d_0^2}{\bar \varepsilon} +1 \right) \right\rbrace +1 \right).
	\]
	Finally, it is easy to see that bound \eqref{eq:bound-mu} becomes the above bound when $\mu \ge C(M_f^2+\bar \varepsilon L_f)/\bar \varepsilon^2$, and hence that the theorem holds for case 2.
	\QEDA
	
	It is worth pointing out how condition \eqref{ineq:lam1} on the prox stepsize is used in the proof of Theorem~\ref{thm:bound-cvx}.
	Indeed, the first inequality in \eqref{ineq:lam1} is used to obtain the inequality in~\eqref{ineq:lammu1},
	the last inequality in~\eqref{ineq:lammu}, and the inequality in~\eqref{ineq:b2}, 
	while the second inequality in \eqref{ineq:lam1} is used to obtain \eqref{ineq:b}.

\section{A One-Cut Adaptive Proximal Bundle Method}\label{sec:adap}

This section presents an adaptive version of the 1C-PB method, referred to as the 1C-APB method which, in contrast to 1C-PB,
does not require the availability of a triple $(L_f,M_f,\mu)$
satisfying (A1) and (A3), and which has
the same $\bar \varepsilon$-iteration complexity as described in Corollary \ref{cor:E2E3} for an arbitrary
$\tau$-free GPB variant.

We start by stating the 1C-APB method.

	\noindent\rule[0.5ex]{1\columnwidth}{1pt}
	
	1C-APB
	
	\noindent\rule[0.5ex]{1\columnwidth}{1pt}
	\begin{itemize}
		\item [0.] Let $ x_0\in \dom h $, $\lambda>0$, $\beta\ge 1$ and $ \bar \varepsilon>0 $
		be given, 
		and set  
		$y_0=x_0$, $t_0=0$, $\tau_0 =0$, and $j=1$;
	    	\item[1.]  set $\tau=\tau_{j-1}/\beta$;
	    	\item[2.]
	    	if $t_{j-1} \le \bar \varepsilon/2$, then
	    	perform a {\bf serious update}, i.e.,
	    	set $x^c_{j} = x_{j-1}$
			and
			$\Gamma_{j} = \ell_f(\cdot;x_{j-1})+h$;
			else, perform a {\bf null update}, i.e.,
			set $x^c_{j}= x^c_{j-1}$ and $\Gamma_{j} = \tau \Gamma_{j-1} + (1-\tau) [\ell_f(\cdot;x_{j-1})+h]$;
	    \item[3.]
	    compute $x_{j}$, $y_{j}$, $m_{j}$ and $t_{j}$ as in step 2 of GPB;
		\item[4.]  if $t_{j-1} > \bar \varepsilon/2$ and
		$t_{j} > \tau t_{j-1} + (1-\tau) \bar \varepsilon/4$, then set $\tau = (1+\tau)/2$ and go to step 2; else,
		set $\tau_{j}=\tau$ and $j\leftarrow j+1$, and go to step 1.
	\end{itemize}
	\rule[0.5ex]{1\columnwidth}{1pt}

 We use the same terminology (e.g., serious iteration) as defined in the paragraph following GPB.
    For ease of discussion in this subsection, we define $\bar \tau$ as follows
    \begin{equation}\label{eq:tau1}
		\bar \tau:= \left[ 1+\frac{(1+\lam \mu)\bar \varepsilon}{8\lam T_{\bar \varepsilon}^2}\right]^{-1}
	\end{equation}
    where $T_{\bar \varepsilon}$ is as in \eqref{def:T}. We note that $\bar \tau$ is the smallest $\tau\in(0,1)$ satisfying \eqref{rel:tau1}.
     
	We now make some remarks about the 1C-APB method.
	First, in contrast to the GPB framework which does not specify how some quantities are generated, 1C-APB is a well-determined method since it specifies $\Gamma_{j}$ in both the serious and null updates,
	the latter of which
	computes $\Gamma_{j}$ based on
	the one-cut bundle update scheme
	(E1).
	Second,
	the iteration count $j$ is only increased in step 4 and when that happens the key inequality
	\begin{equation}\label{ineq:tau-key}
	    t_{j}-\frac{\bar \varepsilon}4 \le \tau_{j} \left(t_{j-1} -\frac{\bar \varepsilon}4 \right)
	\end{equation}
	is satisfied.
	Before that happens,
 1C-APB can loop a few times between steps 2 and 4 and,
 in the process,
 computes intermediate
 quantities which depends on $\tau$ and (with some abuse of notation) are
 all denoted by $\Gamma_{j}$, $x_{j}$, $y_{j}$, $m_{j}$ and $t_{j}$.
 Third, since $\tau_0=0 < \bar \tau$, it may happen
 that many $\tau_j$'s 
 will also be less than $\bar \tau$. Hence,
 1C-APB can not be viewed as a special case of
 GPB since the latter one
 requires its constant $\tau$ to be at least $\bar \tau$.
Finally,  $\{\tau_j\}$ is a non-decreasing sequence if $\beta=1$ but it can decrease
if $\beta >1$.

	
    The following lemma summarizes some basic properties of 
    1C-APB.

	\begin{lemma}\label{lem:tau}
	   The following statements about the 1C-APB method hold:
	   \begin{itemize}
            \item[a)] 
            $0\le \tau_j \le (1+\bar \tau)/2$ for every $j\ge 0$;
	       \item[b)] for every serious iteration $j_k$, $t_{j_k}\le \bar \varepsilon/2$ and \eqref{ineq:tau-key} holds for every  $j \in {\cal C}_k\setminus \{j_{k-1}+1\}$.
	   \end{itemize}
	\end{lemma}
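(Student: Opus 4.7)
The plan is to handle (a) by induction on $j$ and (b) by a direct reading of the exit rule in step 4, with a single unifying observation supplying the nontrivial content of (a): whenever the internal check fails for some candidate $\tau$, that $\tau$ must lie strictly below $\bar\tau$.

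I would first establish this key contrapositive. Fix a null index $j \in B(\ell_0)$ so that the null update of 1C-APB produces $\Gamma_{j+1} = \tau\Gamma_j + (1-\tau)[\ell_f(\cdot;x_j)+h]$, which is precisely the one-cut scheme (E1). Hence $\Gamma_{j+1} \in {\cal C}_\phi(\Gamma_j, x_j^c, \lam, \tau)$, so the proof of Lemma~\ref{lem:tj} goes through verbatim: if $\tau \ge \bar\tau$, then $t_{j+1}-\bar\varepsilon/4 \le \tau(t_j-\bar\varepsilon/4)$, i.e., the inner-loop condition $t_{j+1} > \tau t_j + (1-\tau)\bar\varepsilon/4$ fails. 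Equivalently, every candidate $\tau$ for which the loop decides to update (i.e., set $\tau \leftarrow (1+\tau)/2$) must have $\tau < \bar\tau$. This also furnishes termination of the inner loop, since the sequence $\tau^{(k)} = 1-(1-\tau_j)/2^k$ increases monotonically to $1$ and therefore eventually exceeds $\bar\tau$.

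With this in hand, (a) follows by induction. Monotonicity is immediate: during the inner loop $\tau$ only moves toward $1$, and $\tau_{j+1}$ is its final value, so $\tau_{j+1} \ge \tau_j$. For the upper bound, the base case $\tau_0 = 0 \le (1+\bar\tau)/2$ is clear. For the inductive step, suppose $\tau_j \le (1+\bar\tau)/2$. If the loop exits on its first pass, then $\tau_{j+1}=\tau_j$ and the bound is inherited. Otherwise, let $\tau^{(k^*-1)}$ be the last $\tau$ that failed the check; by the observation above, $\tau^{(k^*-1)} < \bar\tau$, hence $\tau_{j+1} = (1+\tau^{(k^*-1)})/2 < (1+\bar\tau)/2$, completing the induction.

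For (b), the assertion $t_{\ell_0} \le \bar\varepsilon/2$ is simply the definition of a serious index. Now take $j \in B(\ell_0)$. Since $j$ is a null index, $t_j > \bar\varepsilon/2$, so the first clause of the compound if-condition in step 4 holds for the final candidate $\tau$ (which becomes $\tau_{j+1}$). Because step 4 nevertheless exited through the ``else'' branch, the second clause must have failed, giving $t_{j+1} \le \tau_{j+1} t_j + (1-\tau_{j+1})\bar\varepsilon/4$, which after subtracting $\bar\varepsilon/4$ from both sides is exactly \eqref{ineq:tau-key}. The main obstacle in this whole argument is the reuse of Lemma~\ref{lem:tj}: one has to verify that its proof depends on the null update only through the membership $\Gamma_{j+1}\in{\cal C}_\phi(\Gamma_j,x_j^c,\lam,\tau)$ and on the scalar constraint $\tau \ge \bar\tau$, and not on any global property of a fixed $\tau$ throughout the algorithm; once this is checked, everything else is bookkeeping on the dyadic update $\tau \leftarrow (1+\tau)/2$.
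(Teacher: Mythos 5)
Your proposal is correct and follows essentially the same route as the paper: the paper's (terse) proof of part (a) likewise rests on Lemma~\ref{lem:tj} — which shows any candidate $\tau\ge\bar\tau$ cannot trigger the update in step 4, so every updated value $(1+\tau)/2$ stays below $(1+\bar\tau)/2$ — combined with the monotone dyadic update rule, and part (b) is read off from steps 2 and 4 exactly as you do. Your explicit check that Lemma~\ref{lem:tj} depends on the null update only through membership in ${\cal C}_\phi(\Gamma_j,x_j^c,\lam,\tau)$ is the right point to verify and is a useful detail the paper leaves implicit.
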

	
	\begin{proof}
	    a) It follows from Lemma \ref{lem:tj} that if $\tau_j \ge \bar \tau$
	    then $\tau_{\ell} = \tau_j$ for every $\ell > j$.
     This statement now immediately follows from 
	    this observation, the fact that $\tau_0=0$, and the way the
	    sequence $\{\tau_j\}$ is generated.
	    
	    b) This statement follows immediately from steps 2 and 4 of 1C-APB.
	\end{proof}
	
    The following result is similar to Proposition \ref{prop:null-strong} and establishes a bound on the maximum number of consecutive
    null iterates
    generated by 1C-APB. 
    
	\begin{proposition}\label{prop:null-strong2}
		The following statements about
		1C-APB hold:
		\begin{itemize}
		    \item [a)] in each iteration, the number of times $\tau$ is updated in step 4 is at most
		    \begin{equation}\label{eq:log}
		        1 + \left \lceil \log\left( 1+ \frac{8\lam_{\mu} T_{\bar \varepsilon}^2}{\bar \varepsilon}\right) \right \rceil;
		    \end{equation}
		    \item [b)] if $j_{k-1} $ is a serious iteration of the 1C-APB method, then
		    the next serious iteration $ j_k $ happens and satisfies
		    \[
		    j_k-j_{k-1}\le  2\left( 1+ \frac{8\lam_{\mu} T_{\bar \varepsilon}^2}{\bar \varepsilon}\right)  \log\left( \frac{4\bar t}{\bar \varepsilon}\right) + 1
		    \]
		    where $T_{\bar \varepsilon}$, $\lam_{\mu}$, and $\bar t$ are as in \eqref{def:T}, \eqref{eq:lam-mu}, and \eqref{def:bar t}, respectively.
		    
		\end{itemize}
	\end{proposition}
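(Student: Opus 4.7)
The plan is to combine two facts: (i) Lemma \ref{lem:tj} essentially guarantees that the key inequality \eqref{ineq:tau-key} holds whenever $\tau \ge \bar\tau$, so $\tau$ is only updated finitely often; and (ii) once \eqref{ineq:tau-key} is in force, the sequence $\{t_j\}$ contracts at a rate governed by $\tau_j \le (1+\bar\tau)/2$, hence reaches $\bar\varepsilon/2$ within logarithmically many iterations.

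\textbf{Part (a).} First I would observe that the null update of 1C-APB is exactly the one-cut scheme (E1) with the current $\tau$, so the hypotheses of Lemma \ref{lem:recur} and hence of Lemma \ref{lem:tj} hold at every null index provided $\tau \ge \bar\tau$. Consequently, whenever the current $\tau$ in step 4 satisfies $\tau \ge \bar\tau$, we have $t_{j+1} - \bar\varepsilon/4 \le \tau(t_j-\bar\varepsilon/4)$, which is precisely the negation of the trigger condition in step 4, so $\tau$ is \emph{not} updated. It remains to count how many updates are needed before $\tau$ first exceeds $\bar\tau$. Since the update sends $1-\tau \mapsto (1-\tau)/2$ and $\tau_0=0$, after $k$ updates we have $1-\tau = 2^{-k}$, so we need $2^{-k} \le 1-\bar\tau$. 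A short computation from \eqref{eq:tau1}, combined with the identity $(1+\lam\bar\mu)/\lam = 1/\lam_{\bar\mu}$, gives $1/(1-\bar\tau) = 1 + 8\lam_{\bar\mu} T_{\bar\varepsilon}^2/\bar\varepsilon$, yielding the bound in \eqref{eq:log}.

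\textbf{Part (b).} By Lemma \ref{lem:tau}(b), for every $j \in B(\ell_0)$ the key inequality \eqref{ineq:tau-key} holds, and by Lemma \ref{lem:tau}(a) each $\tau_{j+1} \le (1+\bar\tau)/2$. Unrolling \eqref{ineq:tau-key} from $\ell_0+1$ up to $j$ therefore gives
\[
t_j - \frac{\bar\varepsilon}{4} \le \left(\frac{1+\bar\tau}{2}\right)^{j-\ell_0-1} \left( t_{\ell_0+1} - \frac{\bar\varepsilon}{4}\right) \le \left(\frac{1+\bar\tau}{2}\right)^{j-\ell_0-1} \bar t,
\]
where the bound $t_{\ell_0+1}\le \bar t$ comes from Lemma \ref{lem:t1} (whose proof applies verbatim, since step 2 of 1C-APB at index $\ell_0$ sets $\Gamma_{\ell_0+1}=\ell_f(\cdot;x_{\ell_0})+h$). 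A sufficient condition for the right-hand side to be $\le \bar\varepsilon/4$, which forces $t_j \le \bar\varepsilon/2$ and thus makes $j$ a serious index, is
\[
(j-\ell_0-1)\,\log\!\left(\tfrac{2}{1+\bar\tau}\right) \ge \log\!\left(\tfrac{4\bar t}{\bar\varepsilon}\right).
\]
Using $-\log(1-x) \ge x$ with $x=(1-\bar\tau)/2$, we get $\log(2/(1+\bar\tau)) \ge (1-\bar\tau)/2$, so $j-\ell_0-1 \ge (2/(1-\bar\tau))\log(4\bar t/\bar\varepsilon)$ suffices. Invoking the identity $1/(1-\bar\tau) = 1 + 8\lam_{\bar\mu}T_{\bar\varepsilon}^2/\bar\varepsilon$ established in part (a) and adding the extra index for the serious iteration itself yields the stated bound on $\ell_1-\ell_0$.

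The only subtlety I anticipate is keeping part (a) clean: one must make sure the ``intermediate'' passes through steps 2--4 for the same $j$ use the current $\tau$ consistently, and that Lemma \ref{lem:tj}'s hypothesis $\tau \in [\bar\tau,1)$ is invoked per-iteration (not uniformly) so that its local conclusion applies. Everything else is a geometric-contraction computation.
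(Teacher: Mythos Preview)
Your proof is correct and follows essentially the same approach as the paper: part (a) counts the halvings of $1-\tau$ until $\tau\ge\bar\tau$ (after which Lemma~\ref{lem:tj} prevents further updates), and part (b) uses Lemma~\ref{lem:tau} to obtain the contraction with rate $(1+\bar\tau)/2$ and Lemma~\ref{lem:t1} to bound $t_{\ell_0+1}$. The only cosmetic difference is that the paper phrases part (b) as an application of Proposition~\ref{prop:null-strong} with $\tau$ replaced by $(1+\bar\tau)/2$, whereas you inline that geometric argument directly.
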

	\begin{proof}
	    a) It follows from the way $\tau$ is updated in step 4 that
	    $1 - \tau^+ = (1-\tau)/2$
	    where $\tau^+$ is the updated $\tau$.
	    Using this observation and Lemma \ref{lem:tau}(a), we then easily conclude that the number of times $\tau$ changes is bounded by $1+\lceil\log\left( 1/(1-\bar \tau) \right) \rceil$.
	    The conclusion in a) now follows from the last conclusion and the definition of $\bar \tau$ in \eqref{eq:tau1}.
	    
	    b) It follows from Lemma \ref{lem:tau} (a) and (b) that for every $j\in {\cal C}_k\setminus \{j_{k-1}+1\}$,
	    \[
	    t_{j}-\frac{\bar \varepsilon}4 \le \frac{1+\bar \tau}{2} \left(t_{j-1} -\frac{\bar \varepsilon}4\right).
	    \]
	    Using the inequality above, the fact that $t_{j_k}\le \bar \varepsilon/2$ (see Lemma \ref{lem:tau}(b)) and Proposition \ref{prop:null-strong}, we conclude that 
	    \[
	    j_k-j_{k-1} \le  \frac{2}{1-\bar \tau} \log\left( \frac{4 \bar t}{\bar \varepsilon}\right) + 1.
	    \]
	    The above inequality, \eqref{eq:lam-mu}, and the definition of $\bar \tau$ in \eqref{eq:tau1} immediately imply b).
    \end{proof}
    

    We now discuss the $\bar \varepsilon$-iteration complexity of 1C-APB.

    \begin{theorem}\label{thm:adaptive}
    Let initial point $x_0 \in \dom h$, tolerance $\bar \varepsilon>0$ and prox stepsize $\lam>0$ be given, and consider an instance $(f,f';h)$ of \eqref{eq:ProbIntro}
    satisfying conditions (A1)-(A3).
    Then, the $\bar \varepsilon$-iteration complexity for 1C-APB is
        \begin{equation}\label{eq:bound-adap}
		 \left[ 2\left( 1 + \frac{8\lam_{\mu} T_{\bar \varepsilon}^2}{\bar \varepsilon} \right) \log\left( \frac{4\bar t}{\bar \varepsilon}\right) + 1 \right] \left[ \min \left\lbrace \frac{d_0^2}{\lam \bar \varepsilon}, \frac{1}{\mu \lam_{\mu}} \log\left(  \frac{\mu d_0^2}{\bar \varepsilon}+1 \right) \right\rbrace  + 1\right].
		\end{equation}
	As a consequence, if in addition the instance $(f,f';h)$
	and the input triple $(x_0,\lam,\bar \varepsilon)$ satisfy  \eqref{ineq:lam2}, then the $\bar \varepsilon$-iteration complexity for 1C-APB is (up to a logarithmic term) given by \eqref{eq:bound-bar}.
    \end{theorem}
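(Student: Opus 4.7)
The plan is to bound the total iteration count by the product of (i) a bound on the number of serious iterations and (ii) a per-epoch bound on the length of each consecutive run of null iterations, and then specialize under the stepsize condition \eqref{ineq:lam2}. The main point is that although 1C-APB is not formally an instance of GPB (its running $\tau_j$ may fall below $\bar\tau$), the machinery developed in Sections~\ref{subsec:serious} and in Proposition~\ref{prop:null-strong2} still applies.

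First, I would verify that Proposition~\ref{prop:outer} can be invoked directly for 1C-APB with $\mu$ replaced by $\bar\mu$. The proof of that proposition only relies on Lemma~\ref{lem:iterate}, whose three properties are: the bundle function $\hat\Gamma_k$ belongs to ${\cal C}_\mu(\phi)$; the pair $(\hat x_k,\hat m_k)$ solves the prox subproblem of step 2; and $\phi(\hat y_k)-\hat m_k\le\bar\varepsilon/2$. For 1C-APB, the serious update sets $\Gamma_{j+1}=\ell_f(\cdot;x_j)+h$ and the null update produces a convex combination of $\Gamma_j$ and $\ell_f(\cdot;x_j)+h$; both stay in ${\cal C}_{\bar\mu}(\phi)$. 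Step 3 of 1C-APB produces $(x_{j+1},m_{j+1})$ exactly as in GPB, and the serious condition is still $t_j\le\bar\varepsilon/2$. Thus Proposition~\ref{prop:outer} with $\mu=\bar\mu$ yields that the number of serious iterations $K$ is bounded by
\[
\min\!\left\{\frac{d_0^2}{\lam\bar\varepsilon},\ \frac{1}{\bar\mu\lam_{\bar\mu}}\log\!\left(\frac{\bar\mu d_0^2}{\bar\varepsilon}+1\right)\right\}+1 .
\]

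Second, I would combine this with Proposition~\ref{prop:null-strong2}(b), which bounds each gap $\ell_1-\ell_0$ between consecutive serious indices by
\[
2\!\left(1+\frac{8\lam_{\bar\mu}T_{\bar\varepsilon}^2}{\bar\varepsilon}\right)\log\!\left(\frac{4\bar t}{\bar\varepsilon}\right)+1.
\]
Multiplying this per-epoch bound by the bound on $K$ from the previous paragraph gives precisely \eqref{eq:bound-adap}, establishing the first conclusion. Note that Proposition~\ref{prop:null-strong2}(b) is the workhorse here: it already accounts for the extra overhead introduced by step 4's $\tau$-updates, which is why the factor $2$ appears there relative to the bound in Proposition~\ref{prop:null-strong}.

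For the ``As a consequence'' clause, I would mimic the two-case analysis used in the proof of Theorem~\ref{thm:bound-cvx}. Setting
\[
a=\frac{\lam_{\bar\mu}T_{\bar\varepsilon}^2}{\bar\varepsilon},\qquad b=\min\!\left\{\frac{d_0^2}{\lam\bar\varepsilon},\ \frac{1}{\bar\mu\lam_{\bar\mu}}\log\!\left(\frac{\bar\mu d_0^2}{\bar\varepsilon}+1\right)\right\},
\]
bound \eqref{eq:bound-adap} is ${\cal O}((a+1)(b+1))$ up to a logarithmic factor. When $\bar\mu\le C T_{\bar\varepsilon}^2/\bar\varepsilon^2$, the first inequality in \eqref{ineq:lam2} forces $1/\lam_{\bar\mu}\le 2CT_{\bar\varepsilon}^2/\bar\varepsilon$ so $a\gtrsim 1$, and the second inequality combined with the standard estimate $\log(1+t)\ge\min\{t/2,\log 2\}$ forces $b\gtrsim 1$; hence $(a+1)(b+1)={\cal O}(ab+1)$, which matches \eqref{eq:bound-bar}. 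When $\bar\mu\ge CT_{\bar\varepsilon}^2/\bar\varepsilon^2$, the first inequality in \eqref{ineq:lam2} yields $a\le 1/C$ and $\lam\bar\mu\ge 1$, so $(a+1)(b+1)={\cal O}(b+1)$ and $b$ collapses to the bound \eqref{eq:bound-bar} exactly as in case~2 of Theorem~\ref{thm:bound-cvx}. The main obstacle is really only bookkeeping: justifying that Proposition~\ref{prop:outer} survives the passage from GPB to 1C-APB (including when step 4 loops internally without incrementing $j$), and correctly pairing the serious-iterate bound with the null-run bound from Proposition~\ref{prop:null-strong2}(b) rather than the weaker one from Proposition~\ref{prop:null-strong}.
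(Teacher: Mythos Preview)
Your proposal is correct and follows essentially the same route as the paper: apply the serious-iteration analysis of Subsection~\ref{subsec:serious} with $\mu=\bar\mu$, multiply by the per-epoch null bound from Proposition~\ref{prop:null-strong2}(b), and then reduce the resulting ${\cal O}((a+1)(b+1))$ via the two-case argument of Theorem~\ref{thm:bound-cvx}. One small clarification: the factor $2$ in Proposition~\ref{prop:null-strong2}(b) arises because the adaptive $\tau_j$ may reach $(1+\bar\tau)/2$ (so $1/(1-\tau_j)\le 2/(1-\bar\tau)$), not because it absorbs the internal step~4 loops---those are counted separately in Proposition~\ref{prop:null-strong2}(a) and do not enter the iteration count indexed by $j$.
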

    
    \begin{proof}
        First, the same analysis as in Subsection \ref{subsec:serious} shows that the number of serious iterations of 1C-APB is bounded by \eqref{bound:outer}.
        Hence, this conclusion and Proposition \ref{prop:null-strong2}(b) imply that the $\bar \varepsilon$-iteration complexity for 1C-APB is given by \eqref{eq:bound-adap}.
        Letting $a=\lam_{\mu} T_{\bar \varepsilon}^2/\bar \varepsilon$ and $b$ be as in \eqref{def:ab}, and using \eqref{eq:bound-adap}, we have $ {\cal O}((a+1)(b+1)) $ is the $\bar \varepsilon$-iteration complexity for 1C-APB up to a logarithmic term. 
        Using the assumption \eqref{ineq:lam2} and following a similar argument as in the proof of Theorem \ref{thm:bound-cvx}, 
        we conclude that the $\bar \varepsilon$-iteration complexity for 1C-APB is (up to a logarithmic term) given by \eqref{eq:bound-bar}.
    \end{proof}

    It is worth noting that
    a result similar to
    Corollary \ref{cor:holder} 
    dealing with instances
    $(f,f';h)$ of \eqref{eq:ProbIntro} satisfying (A1), (A2), and \eqref{eq:hybrid-holder}
    can also be established
    for 1C-APB.
    
    
    We end this section by discussing the complexity of 1C-APB in terms of the total number of resolvent evaluations of $\partial h$, i.e., an evaluation of the point-to-point operator $(I+\alpha \partial h)^{-1}(\cdot)$ for some $\alpha>0$.
    Observe first that the computation of
    $x_{j}$ in step 3 of 1C-APB requires one resolvent evaluation of $\partial h$ due to \eqref{def:xj} and the fact that
    $\Gamma_{j}$ has the form \eqref{eq:Gamma-form}. Hence, 
    the total number of resolvent evaluations of $\partial h$ is
    bounded by the number that step 3 is performed.
    Thus, it follows from
    Theorem \ref{thm:adaptive} and  Proposition \ref{prop:null-strong2}(a) that the total number of resolvent evaluations of $\partial h$ is bounded by the product of
    \eqref{eq:log} and \eqref{eq:bound-adap} if $\beta>1$ or the sum of \eqref{eq:log} and \eqref{eq:bound-adap} if $\beta=1$.

    \section{Concluding Remarks}\label{sec:conclusion}

	This paper presents a generic proximal bundle framework, namely, GPB, for solving the HCCO problem \eqref{eq:ProbIntro}. 
	Instead of focusing on a specific bundle update scheme, GPB is based on a generic one, i.e., the BU blackbox, which includes three schemes, namely, multiple-cuts (E3), two-cuts (E2), and a novel one-cut scheme (E1).
	Moreover, this paper considers the hybrid case where (A3) holds and presents a unified and simple analysis for GPB.
	It establishes two $\bar \varepsilon$-iteration complexity for GPB instances, namely, \eqref{eq:bound-mu} for 1C-PB and \eqref{eq:bound-bar} for the $\tau$-free GPB instances (i.e., 2C-PB and MC-PB). 
	Finally, this paper presents the 1C-APB method which is an adaptive version of 1C-PB and shows that 1C-APB has the same $\bar \varepsilon$-iteration complexity as the $\tau$-free GPB instances. 
	
	We briefly discuss the relationship between GPB instances and other methods.
	First, the CS-CS method can be viewed as a special instance of any GPB variant with a relatively small prox stepsize.
 Second, it is worth noting that 1C-PB  has slight similarity with the dual averaging (DA) method of \cite{nesterov2009primal} since both methods explore the idea of aggregating cuts into a single one.
	However, there are essential differences between the two methods:
    1) DA uses variable prox stepsizes, while 1C-PB uses a constant one;
    and 2) most importantly,
    1C-PB updates the prox-center immediately after every serious iteration, while
    DA uses a fixed prox-center throughout the process.
	


	We finally discuss some possible extensions of our analysis in this paper.
	
 First, under the assumption that
 the diameter $D$ of $\dom h$ is finite,
 it follows from the last inequality in Subsection 3.1 of \cite{lan2012optimal} that
the $\bar \varepsilon$-iteration complexity of
an accelerated composite subgradient method proposed in \cite{lan2012optimal} is 
\[
{\cal O}\left(\frac{\sqrt{L_f} D}{\sqrt{\bar \varepsilon}} + \frac{M_f^2 D^2}{\bar \varepsilon^2}\right). 
\]
Moreover, it follows from the Introduction of
\cite{lan2012optimal} (see the paragraph containing equation (6) there) that the
above bound is optimal for the HCCO problem class determined by
$L_f$, $M_f$ and $D$.
 In this regards, the $\bar \varepsilon$-iteration complexity of GPB is optimal 
 when $L_f=0$ (i.e., in the pure nonsmooth case), but it is not optimal when $L_f>0$. It would be interesting to design an accelerated variant of GPB which  
	is optimal for the aforementioned
 HCCO problem class.
 
	Second, proximal bundle methods have not been studied in the context of stochastic subgradient oracles with continuous distribution, and hence it is interesting to investigate such methods by using the techniques developed in this paper.
 
 Third, a drawback of GPB is that its cycle termination criterion, namely,
 $t_j \le \bar \varepsilon/2$, depends on the tolerance $\bar \varepsilon$ specified for it. An interesting question is whether it is possible to develop a variant of GPB with a cycle termination criterion which
 does not depend on the tolerance
 $\bar \varepsilon$.

Finally, we address issues related to the strongly convex case
(i.e., $\mu>0$).
Our analysis
assumes that
$f$ is convex and $h$ is $\mu$-convex and shows that
(see Theorem \ref{thm:bound-cvx}),
even though GPB does not
require $\mu>0$,
the dependence of its iteration-complexity bound 
\eqref{eq:bound-mu} on $\mu$
and $\bar \varepsilon$ is (up to a logarithmic term) the same as
that for the CS-CS method
(see Proposition \ref{prop:sub-new}).
An interesting question is whether GPB or a related
variant which does not require $\mu$ either, directly applied to the HCCO problem \eqref{eq:ProbIntro}
also
has the above iteration-complexity bound
under  the assumption that
 $f$ is $\mu_f$-convex, $h$ is $\mu_h$-convex and $\mu=\mu_f+\mu_h$.
 
 We now mention some papers and observations related
 to the topic of  the previous paragraph.
 Under the assumption that $L_f=0$ and $\mu_h=0$ (and hence, $\mu=\mu_f$),
 the proximal bundle method
 of \cite{du2017rate} is shown to have an
  $\tilde {\cal O}(M_f^2/(\mu \bar \varepsilon))$ iteration-complexity bound
  but requires $\mu_f$ as input
  since its prox stepsize is
  chosen as $\lam =1/\mu_f$.
  Moreover, for the same method and under the same assumptions,
  \cite{diaz2021optimal}  improves the latter bound to
  ${\cal O}(M_f^2/(\mu \bar \varepsilon))$ by removing a logarithmic term.
  Finally, if $\mu_f$ is known
  and the new composite
  structure $(\tilde f,\tilde h)$ defined as
  $\tilde f=f-\mu_f\|\cdot\|^2/2$ and
  $\tilde h=h+\mu_f\|\cdot\|^2/2$
  is considered in place of
  $(f,h)$,
  then GPB
  with this composite structure has iteration-complexity equal
  to \eqref{eq:bound-mu}
  where $\mu=\mu_f+\mu_h$.

	
	
	\bibliographystyle{plain}
	\bibliography{Proxacc_ref}

\def\cprime{$'$}
\begin{thebibliography}{10}

\bibitem{astorino2013nonmonotone}
A.~Astorino, A.~Frangioni, A.~Fuduli, and E.~Gorgone.
\newblock A nonmonotone proximal bundle method with (potentially) continuous
  step decisions.
\newblock {\em SIAM Journal on Optimization}, 23(3):1784--1809, 2013.

\bibitem{beck2017first}
A.~Beck.
\newblock {\em First-order methods in optimization}, volume~25.
\newblock SIAM, 2017.

\bibitem{ben2005non}
A.~Ben-Tal and A.~Nemirovski.
\newblock Non-euclidean restricted memory level method for large-scale convex
  optimization.
\newblock {\em Mathematical Programming}, 102(3):407--456, 2005.

\bibitem{de2016doubly}
W.~de~Oliveira and M.~Solodov.
\newblock A doubly stabilized bundle method for nonsmooth convex optimization.
\newblock {\em Mathematical programming}, 156(1-2):125--159, 2016.

\bibitem{diaz2021optimal}
M.~D{\'\i}az and B.~Grimmer.
\newblock Optimal convergence rates for the proximal bundle method.
\newblock {\em Available on arXiv:2105.07874}, 2021.

\bibitem{du2017rate}
Y.~Du and A.~Ruszczy{\'n}ski.
\newblock Rate of convergence of the bundle method.
\newblock {\em Journal of Optimization Theory and Applications},
  173(3):908--922, 2017.

\bibitem{frangioni2002generalized}
A.~Frangioni.
\newblock Generalized bundle methods.
\newblock {\em SIAM Journal on Optimization}, 13(1):117--156, 2002.

\bibitem{kiwiel1995proximal}
K.~C. Kiwiel.
\newblock Proximal level bundle methods for convex nondifferentiable
  optimization, saddle-point problems and variational inequalities.
\newblock {\em Mathematical Programming}, 69(1-3):89--109, 1995.

\bibitem{kiwiel2000efficiency}
K.~C. Kiwiel.
\newblock Efficiency of proximal bundle methods.
\newblock {\em Journal of Optimization Theory and Applications},
  104(3):589--603, 2000.

\bibitem{lan2012optimal}
G.~Lan.
\newblock An optimal method for stochastic composite optimization.
\newblock {\em Mathematical Programming}, 133(1):365--397, 2012.

\bibitem{lan2015bundle}
G.~Lan.
\newblock Bundle-level type methods uniformly optimal for smooth and nonsmooth
  convex optimization.
\newblock {\em Mathematical Programming}, 149(1-2):1--45, 2015.

\bibitem{lemarechal1975extension}
C.~Lemar{\'e}chal.
\newblock An extension of davidon methods to non differentiable problems.
\newblock In {\em Nondifferentiable optimization}, pages 95--109. Springer,
  1975.

\bibitem{lemarechal1978nonsmooth}
C.~Lemar{\'e}chal.
\newblock Nonsmooth optimization and descent methods.
\newblock 1978.

\bibitem{lemarechal1995new}
C.~Lemar{\'e}chal, A.~Nemirovski, and Y.~Nesterov.
\newblock New variants of bundle methods.
\newblock {\em Mathematical programming}, 69(1-3):111--147, 1995.

\bibitem{liang2020proximal}
J.~Liang and R.~D.~C. Monteiro.
\newblock A proximal bundle variant with optimal iteration-complexity for a
  large range of prox stepsizes.
\newblock {\em SIAM Journal on Optimization}, 31(4):2955--2986, 2021.

\bibitem{mifflin1982modification}
R.~Mifflin.
\newblock A modification and an extension of {L}emar{\'e}chal’s algorithm for
  nonsmooth minimization.
\newblock In {\em Nondifferential and variational techniques in optimization},
  pages 77--90. Springer, 1982.

\bibitem{Newton}
R.~D.~C. Monteiro and B.~F. Svaiter.
\newblock Iteration-complexity of a newton proximal extragradient method for
  monotone variational inequalities and inclusion problems.
\newblock {\em SIAM J. Optim.}, 22(3):914--935, 2012.

\bibitem{nesterov2009primal}
Y.~Nesterov.
\newblock Primal-dual subgradient methods for convex problems.
\newblock {\em Mathematical programming}, 120(1):221--259, 2009.

\bibitem{nesterov2015universal}
Y.~Nesterov.
\newblock Universal gradient methods for convex optimization problems.
\newblock {\em Mathematical Programming}, 152(1):381--404, 2015.

\bibitem{de2014convex}
W.~de Oliveira, C.~Sagastiz{\'a}bal, and C.~Lemar{\'e}chal.
\newblock Convex proximal bundle methods in depth: a unified analysis for
  inexact oracles.
\newblock {\em Mathematical Programming}, 148(1-2):241--277, 2014.

\bibitem{ruszczynski2011nonlinear}
A.~Ruszczy{\'n}ski.
\newblock {\em Nonlinear optimization}.
\newblock Princeton university press, 2011.

\bibitem{urruty1996convex1}
J.-B.~H. Urruty and C.~Lemar{\'e}chal.
\newblock {\em Convex analysis and minimization algorithms I}.
\newblock Springer-Verlag, 1993.

\bibitem{urruty1996convex}
J.-B.~H. Urruty and C.~Lemar{\'e}chal.
\newblock {\em Convex analysis and minimization algorithms II}.
\newblock Springer-Verlag, 1993.

\bibitem{van2017probabilistic}
W.~van Ackooij, V.~Berge, W.~de~Oliveira, and C.~Sagastiz{\'a}bal.
\newblock Probabilistic optimization via approximate p-efficient points and
  bundle methods.
\newblock {\em Computers \& Operations Research}, 77:177--193, 2017.

\bibitem{wolfe1975method}
P.~Wolfe.
\newblock A method of conjugate subgradients for minimizing nondifferentiable
  functions.
\newblock In {\em Nondifferentiable optimization}, pages 145--173. Springer,
  1975.

\end{thebibliography}
	
	\appendix

	\section{Technical Results}\label{sec:technical}
	
	The main result of this section is Lemma \ref{lem:consecutive} which was used in the proof of Lemma \ref{lem:t1}. It also presents the proof of Proposition \ref{lem:hybrid}.
	
	Before stating and proving Lemma \ref{lem:consecutive},
	we first present two technical results.
	
	
	
	\begin{lemma}\label{lem:prox-dist}
	Let $x\in \R^n$, $0<\tilde \lam < \lam$ and $\Gamma\in \bConv{n}$ be given, and
	define
	\[
	    x^+=\underset{u\in \R^n}\argmin \left\lbrace \Gamma(u) +\frac{1}{2 \lam}\|u- x \|^2 \right\rbrace,\quad
	    \tx^+=\underset{u\in \R^n}\argmin \left\lbrace \Gamma(u) +\frac{1}{2 \tilde \lam}\|u-x\|^2 \right\rbrace.
	\]
	Then, we have $\|x^+-x\|\le (\lam/\tilde \lam) \|\tx^+-x\| $.
	\end{lemma}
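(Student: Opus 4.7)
The plan is to prove this via the first-order optimality conditions and the monotonicity of $\partial \Gamma$, reducing the claim to a scalar quadratic inequality.

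First, I would write down the first-order optimality conditions for the two proximal subproblems: since $\Gamma$ is convex, these read
\[
\frac{x - x^+}{\lam} \in \partial \Gamma(x^+), \qquad \frac{x - \tx^+}{\tilde \lam} \in \partial \Gamma(\tx^+).
\]
Setting $u := x^+ - x$ and $\tilde u := \tx^+ - x$, the goal becomes to show $\|u\| \le (\lam/\tilde\lam)\|\tilde u\|$.

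Next, I would apply monotonicity of $\partial \Gamma$ to the two inclusions above, which yields
\[
\Inner{\frac{-u}{\lam} - \frac{-\tilde u}{\tilde \lam}}{u - \tilde u} \ge 0.
\]
Expanding and multiplying through by $\lam \tilde \lam > 0$, this rearranges to
\[
(\lam + \tilde \lam)\,\inner{u}{\tilde u} \ge \tilde \lam \|u\|^2 + \lam \|\tilde u\|^2.
\]
Applying Cauchy–Schwarz to the left-hand side and writing $a := \|u\|$, $b := \|\tilde u\|$, I obtain the scalar inequality
\[
(\lam + \tilde \lam)\, ab \ge \tilde \lam\, a^2 + \lam\, b^2,
\]
which factors as $(a - b)(\tilde \lam a - \lam b) \le 0$.

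To finish, I would split into two cases. If $a \le b$, then $a \le b \le (\lam/\tilde \lam)\, b$ since $\lam/\tilde \lam > 1$, and we are done. If $a > b$, then the factored inequality forces $\tilde \lam a - \lam b \le 0$, i.e.\ $a \le (\lam/\tilde \lam)\, b$, which is exactly the desired bound. I do not anticipate a real obstacle here; the only subtle step is recognizing that Cauchy–Schwarz plus the factorization $(a-b)(\tilde \lam a - \lam b)$ converts the monotonicity estimate directly into the quantitative bound, and that one has to handle the case $a \le b$ separately since it does not come from the factored inequality alone but from $\lam/\tilde \lam > 1$.
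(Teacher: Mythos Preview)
Your proof is correct, but it differs from the paper's in an interesting way. The paper does not really argue the inequality at all: it rewrites $x^+$ and $\tx^+$ in resolvent notation, introduces $\varphi_A(\lam;x):=\lam\|(I+\lam\,\partial\Gamma)^{-1}(x)-x\|$, and then simply invokes the known inequality $\varphi_A(\lam;x)\le(\lam^2/\tilde\lam^2)\varphi_A(\tilde\lam;x)$ from another reference, which is precisely the desired bound after unwinding the definitions. By contrast, you give a direct, self-contained argument: you write the optimality conditions, apply monotonicity of $\partial\Gamma$, use Cauchy--Schwarz to reduce to the scalar inequality $(\lam+\tilde\lam)ab\ge\tilde\lam a^2+\lam b^2$, and factor it as $(a-b)(\tilde\lam a-\lam b)\le 0$. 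Your route is more elementary and exposes exactly why the bound holds---it is a pure consequence of monotonicity---whereas the paper's route is shorter on the page but outsources the content to an external citation.
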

	
	\begin{proof}
	    Denote $\partial \Gamma$ by $A$, and define
	    \[
	    y_{A}(\lam; x):=(I+\lam A)^{-1}(x), \quad \varphi_{A}(\lam; x):=\lam\|y_{A}(\lam;x)-x\|.
	    \]
	    It is easy to see that 
	    \[
	    \|x^+-x\| = \|y_{A}(\lam;x)-x\|= \frac1\lam \varphi_{A}(\lam;x),
	    \quad
	    \|\tx^+-x\| = \|y_{A}(\tilde\lam;x)-x\|= \frac1{\tilde\lam} \varphi_{A}(\tilde\lam;x).
	    \]
	    The conclusion of the lemma now follows  from the above observation and the second inequality in (39) of \cite{Newton}
	    which claims 
	    that
	    \[
	    \varphi_{A}(\lam;x) \le \frac{\lam^2}{\tilde \lam^2} \varphi_{A}(\tilde\lam;x).
	    \]
	\end{proof}
	
	
	\begin{lemma}\label{lem:descent}
	Let $(\Gamma,z_0,\lam)\in \mConv{n} \times \R^n \times (0,1/\bar L_f)$ be a triple such that 
 \begin{equation}\label{ineq:local}
     \ell_f(\cdot;z_0)+h \le \Gamma \le \phi
 \end{equation}
 and
	define
	\begin{equation}\label{eq:x}
	    z:=\underset{u\in \R^n}\argmin \left\lbrace \Gamma(u) +\frac{1}{2 \lam}\|u- z_0 \|^2 \right\rbrace.
	\end{equation}
	Then, for every $u\in \dom h$, we have
	\begin{equation}\label{ineq:descent}
	    \frac{1}{2}\left( \mu + \frac{1}{\lam}\right) \|u-z\|^2 + \phi(z) - \phi(u) \le \frac{1}{2\lam} \|u-z_0\|^2 + \frac{2\lam \bar M_f^2}{1-\lam \bar L_f}.
	\end{equation}
	\end{lemma}
	
	\begin{proof}
	    It follows from the assumption that $\Gamma\in \mConv{n}$ that the function $\Gamma+\|\cdot-z_0\|^2/(2\lam)$ is $(\mu + \lam^{-1})$-strongly convex.
	    This conclusion, \eqref{ineq:local}, \eqref{eq:x} and Theorem 5.25(b) of \cite{beck2017first} with $ f= \Gamma+\|\cdot-z_0\|^2/(2\lam) $, $ x^*=z $ and $ \sigma= \mu + \lam^{-1} $, then imply that for every $u\in \dom h$,
	    \begin{align*}
	    \phi(u) + \frac{1}{2\lam}\|u-z_0\|^2 &\overset{\eqref{ineq:local}}{\ge} \Gamma(u) + \frac{1}{2\lam}\|u-z_0\|^2\\
	    &\overset{\eqref{eq:x}}{\ge} \Gamma(z) + \frac{1}{2\lam}\|z-z_0\|^2 + \frac12 \left(\mu + \frac{1}{ \lam}\right) \|u-z\|^2\\
	    &\overset{\eqref{ineq:local}}{\ge} \ell_f(z;z_0) + h(z) + \frac{1}{2\lam}\|z-z_0\|^2+ \frac12 \left(\mu + \frac{1}{ \lam}\right) \|u-z\|^2.
	    \end{align*}
	    The above inequality, the fact that $\phi=f+h$ and \eqref{ineq:est} with  $(M_f, L_f, x, y)=(\bar M_f, \bar L_f, z,z_0)$ then imply that
	    \begin{align*}
			\frac{1}{2}\left( \mu + \frac{1}{\lam}\right) & \|u-z\|^2 + \phi(z) - \phi(u) 
			\le \frac{1}{2\lam} \|u-z_0\|^2  +\phi(z) - \ell_f(z;z_0) - h(z) - \frac{1}{2\lam} \|z-z_0\|^2 \\
			&\overset{\eqref{ineq:est}}{\le} \frac{1}{2\lam} \|u-z_0\|^2  + 2 \bar M_f\|z-z_0\| - \frac{1-\lam \bar L_f}{2\lam} \|z-z_0\|^2.
		\end{align*}
		The lemma now follows from the above inequality, the fact that $\lam \bar L_f <1$ and the inequality $2ab-a^2\le b^2$ with $a^2=(1-\lam \bar L_f)\|z-z_0\|^2/(2\lam)$ and $b^2=2\lam \bar M_f^2/(1-\lam \bar L_f)$.
	\end{proof}
	
	We are now ready to prove the main technical result of this section which provides a bound on the distance between a serious iterate generated by GPB
	and its consecutive (possibly null or serious) iterate.
	It is worth noting that this result is quite general and
	makes no use of the generic bundle update scheme of Subsection \ref{subsec:update}
	since the step from $x_{\ell_0}$ to $x_{\ell_0+1}$ does not use this update.
	
	\begin{lemma}\label{lem:consecutive}
	If $\ell_0$ is a serious iteration, then
	 \begin{equation}\label{ineq:consecutive}
	    \|x_{\ell_0}-x_{\ell_0+1}\| \le 2\sqrt{2} (\max\{1,2\lam \bar L_f\}d_0 + \lam \bar M_f). 
	 \end{equation}
	\end{lemma}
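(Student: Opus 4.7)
The plan is to split on whether the stepsize $\lam$ is small or large relative to $1/\bar L_f$, applying Lemma \ref{lem:descent} directly in the small-stepsize regime and reducing to that case via Lemma \ref{lem:prox-dist} in the large-stepsize regime.

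\textbf{Case 1 ($\lam\bar L_f \le 1/2$).} Since $\ell_0$ is a serious index, step 1 of GPB guarantees $x^c_{\ell_0+1}=x_{\ell_0}$, $\Gamma_{\ell_0+1}\in {\cal C}_\mu(\phi)$, and $\Gamma_{\ell_0+1}\ge \ell_f(\cdot;x_{\ell_0})+h$; moreover $\lam<1/\bar L_f$. Hence the triple $(\Gamma_{\ell_0+1},x_{\ell_0},\lam)$ meets the hypotheses of Lemma \ref{lem:descent}. I would apply \eqref{ineq:descent} with $u=x_0^*$, $z_0=x_{\ell_0}$ and $z=x_{\ell_0+1}$, drop the nonnegative terms $\phi(x_{\ell_0+1})-\phi^*$ and $\mu\|x_0^*-x_{\ell_0+1}\|^2/2$ from the left side, and use $1-\lam\bar L_f\ge 1/2$ to obtain
\[
\|x_0^*-x_{\ell_0+1}\|^2 \;\le\; \|x_0^*-x_{\ell_0}\|^2 + \frac{4\lam^2\bar M_f^2}{1-\lam\bar L_f} \;\le\; \|x_0^*-x_{\ell_0}\|^2 + 8\lam^2\bar M_f^2.
\]
Taking square roots via $\sqrt{a+b}\le \sqrt a+\sqrt b$, combining with the bound $\|x_{\ell_0}-x_0^*\|\le \sqrt{2}\,d_0$ from Proposition \ref{prop:outer}, and then applying the triangle inequality, I get
\[
\|x_{\ell_0}-x_{\ell_0+1}\| \;\le\; 2\|x_{\ell_0}-x_0^*\| + 2\sqrt{2}\,\lam\bar M_f \;\le\; 2\sqrt{2}\,(d_0+\lam\bar M_f),
\]
which matches \eqref{ineq:consecutive} because $\max\{1,2\lam\bar L_f\}=1$ in this regime.

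\textbf{Case 2 ($\lam\bar L_f>1/2$).} Set $\tilde\lam:=1/(2\bar L_f)$, so $0<\tilde\lam<\lam$ and $\tilde\lam\bar L_f=1/2$, and define the fictitious iterate
\[
\tilde x_{\ell_0+1} := \underset{u\in \R^n}{\argmin}\left\{\Gamma_{\ell_0+1}(u)+\frac{1}{2\tilde\lam}\|u-x_{\ell_0}\|^2\right\}.
\]
The Case~1 argument applies verbatim to $\tilde x_{\ell_0+1}$ with $\tilde\lam$ in place of $\lam$ (it only invoked $\phi\ge \phi^*$ and Proposition \ref{prop:outer}, neither of which depends on which iterate is taken), giving $\|x_{\ell_0}-\tilde x_{\ell_0+1}\|\le 2\sqrt{2}\,(d_0+\tilde\lam\bar M_f)$. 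Finally, Lemma \ref{lem:prox-dist} applied with $\Gamma=\Gamma_{\ell_0+1}$, $x=x_{\ell_0}$, $x^+=x_{\ell_0+1}$ and $\tilde x^+=\tilde x_{\ell_0+1}$ yields
\[
\|x_{\ell_0}-x_{\ell_0+1}\| \;\le\; \frac{\lam}{\tilde\lam}\|x_{\ell_0}-\tilde x_{\ell_0+1}\| \;=\; 2\lam\bar L_f\,\|x_{\ell_0}-\tilde x_{\ell_0+1}\| \;\le\; 2\sqrt{2}\,(2\lam\bar L_f\,d_0+\lam\bar M_f),
\]
where the final simplification uses $2\bar L_f\tilde\lam=1$. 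This matches \eqref{ineq:consecutive} since $\max\{1,2\lam\bar L_f\}=2\lam\bar L_f$ here.

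The main obstacle is recognizing that Lemma \ref{lem:descent} is only usable when $\lam\bar L_f$ stays bounded away from $1$ (its $2\lam\bar M_f^2/(1-\lam\bar L_f)$ term blows up as $\lam\bar L_f\to 1$), and that Lemma \ref{lem:prox-dist} is the correct tool for transferring a small-stepsize bound to the actual (possibly large) stepsize. The factor $\max\{1,2\lam\bar L_f\}$ in the statement is the fingerprint of this two-regime argument, and the constants happen to line up so that Case 1 at stepsize $\tilde\lam=1/(2\bar L_f)$, scaled by the ratio $\lam/\tilde\lam=2\lam\bar L_f$, reproduces exactly the claimed bound.
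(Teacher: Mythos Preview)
Your proof is correct and follows essentially the same approach as the paper's: the paper simply sets $\tilde\lam:=\min\{\lam,1/(2\bar L_f)\}$, applies Lemma~\ref{lem:descent} at stepsize $\tilde\lam$ to the auxiliary point (your $\tilde x_{\ell_0+1}$, their $w_{\ell_0}$), bounds its distance to $x_{\ell_0}$ via the triangle inequality and Proposition~\ref{prop:outer}, and then invokes Lemma~\ref{lem:prox-dist} to pass to the true stepsize $\lam$, using $\lam/\tilde\lam=\max\{1,2\lam\bar L_f\}$. Your explicit two-case split is just an unpacking of this unified argument (and in fact handles the edge case $\tilde\lam=\lam$ slightly more cleanly, since the paper's invocation of Lemma~\ref{lem:prox-dist} nominally requires $\tilde\lam<\lam$).
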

	
	\begin{proof}
	    For the sake of this proof only,
	    we define the auxiliary stepsize $\tilde\lam:=\min\{\lam,1/(2\bar L_f)\}$ and auxiliary point
	    \[
	        w_{\ell_0}:=\underset{u\in \R^n}\argmin
	    \left\lbrace \Gamma_{\ell_0+1}(u) +\frac{1}{2\tilde \lam}\|u- x_{\ell_0} \|^2 \right\rbrace.
	    \]
	    Since $j=\ell_0$ is a serious index,
	    it follows from step 1 of GPB that
	    $\Gamma_{\ell_0+1}\ge \ell_f(\cdot;x_{\ell_0})+h$, and
	    hence that
	    $(\Gamma,z_0,\lam)=(\Gamma_{\ell_0+1}, x_{\ell_0}, \tilde \lam)$
	    and $z=w_{\ell_0}$ satisfy the assumptions of
	    Lemma \ref{lem:descent}.
	    The conclusion of Lemma \ref{lem:descent} with $(u,z,z_0,\lam)=(x_0^*,w_{\ell_0},x_{\ell_0},\tilde \lam)$ and
	    the fact that $\tilde \lam \le 1/(2\bar L_f)$ then imply
	    that
	    \begin{align*}
	    \frac{1}{2}\left( \mu + \frac{1}{\tilde \lam}\right) \|x_0^*-w_{\ell_0}\|^2 + \phi(w_{\ell_0}) - \phi(x_0^*)
	    \le \frac{1}{2\tilde \lam} \|x_0^*-x_{\ell_0}\|^2 + 4\tilde \lam \bar M_f^2
	    \end{align*}
	    which in turn, in view of
	    the facts that $\phi(w_{\ell_0})\ge \phi^*= \phi(x_0^*)$ and $\mu \ge 0$,
	    and the inequality $(a+b)^{1/2} \le a^{1/2}+ b^{1/2}$ for any $a, b \ge 0$,
	    yields
        \[
        \|x_0^*-w_{\ell_0}\| \le \|x_{\ell_0}-x_0^*\| + 2\sqrt{2} \tilde \lam \bar M_f.
        \]
        This inequality and the triangle inequality 
        then imply that
        \begin{equation}\label{eq:bound-tx}
		    \|x_{\ell_0}-w_{\ell_0}\| \le \|x_{\ell_0}-x_0^*\| + \|x_0^*-w_{\ell_0}\| \le 2\|x_{\ell_0}-x_0^*\| + 2\sqrt{2}\tilde\lam \bar M_f
		    \le 2\sqrt{2} ( d_0 + \tilde\lam \bar M_f)
		\end{equation}
		where the last inequality is due to
		\eqref{ineq:dist} and the fact that $x_{\ell_0}$ is equal to one
		of serious iterates $\hat x_k$
		preceding the last one	generated by GPB.
		On the other hand,
		since $0< \tilde \lam < \lam$ and $\Gamma_{\ell_0+1}\in \bConv{n}$, it follows from Lemma \ref{lem:prox-dist} with $(\Gamma,x)=(\Gamma_{\ell_0+1},x_{\ell_0})$ that
		\[
		\|x_{\ell_0+1}-x_{\ell_0}\| \le \frac{\lam}{\tilde \lam}\|w_{\ell_0}-x_{\ell_0}\|.
		\]
		This inequality together with \eqref{eq:bound-tx} and the fact that $\lam/\tilde \lam = \max\{1,2\lam \bar L_f\}$ clearly implies \eqref{ineq:consecutive}.
	\end{proof}

    We end this section by providing the proof of Proposition \ref{lem:hybrid}.
    
    \noindent
	{\bf Proof of Proposition \ref{lem:hybrid}}
    Using Young's inequality 
\[
ab \le \frac{a^p}{p} + \frac{b^q}{q}
\]
with 
\[
a=\|x-y\|^\nu \left(\frac{1-\nu}{\alpha}\right)^{1-\nu},\quad
b = \left(\frac{\alpha}{1-\nu}\right)^{1-\nu},\quad
p=\frac{1}{\nu},\quad
q=\frac{1}{1-\nu},
\]
where $\alpha>0$ is arbitrary, 
we have
\[
\| x-y\|^\nu \le  \nu\left(\frac{1-\nu}{\alpha}\right)^{\frac{1-\nu}{\nu}} \|x-y\| + \alpha.
\]
It follows from \eqref{eq:hybrid-holder} and the above inequality that
\[
    \|f'(x)-f'(y)\|
    \le 2 M_\nu + L_\nu \alpha + L_\nu \nu\left(\frac{1-\nu}{\alpha}\right)^{\frac{1-\nu}{\nu}} \|x-y\|,
\]
and hence that (A3) holds with $(M_f,L_f)=(M_f(\alpha),L_f(\alpha))$ in view of \eqref{eq:ML}.
Moreover, using \eqref{eq:ML} and the fact that $(a+b)^2\le 2a^2+2b^2$ for every $a, b \in \R$, we have
\begin{align*}
        \inf_{\alpha>0} \{ M_f(\alpha)^2 + \bar \varepsilon L_f(\alpha) \} &\le \min_{\alpha>0} \left\lbrace 2M_\nu^2 + \frac{L_\nu^2 \alpha^2}2+ \bar \varepsilon L_\nu \nu\left(\frac{1-\nu}{\alpha}\right)^{\frac{1-\nu}{\nu}}\right\rbrace \\
    &= 2M_\nu^2+ \bar \varepsilon^{\frac{2\nu}{\nu+1}} L_\nu^{\frac{2}{\nu + 1}} \left[\frac12 (1-\nu)^{\frac{2}{\nu + 1}} + \nu (1-\nu)^\frac{1-\nu}{\nu+1} \right] \\
    &\le 2M_\nu^2+ 2\bar \varepsilon^{\frac{2\nu}{\nu+1}} L_\nu^{\frac{2}{\nu + 1}}, 
    \end{align*}
where the minimization problem is minimized at 
    \[
\alpha=\left(\frac{\bar \varepsilon}{L_\nu}\right)^{\frac{\nu}{\nu+1}}(1-\nu)^{\frac{1}{\nu + 1}}
\]
and the second inequality is due to the fact that $\nu\in (0,1)$.
Hence, \eqref{ineq:inf} holds.
Finally, \eqref{ineq:T} immediately follows from the definition of $T_{\bar \varepsilon}$ in \eqref{def:T},  \eqref{ineq:inf}, and the fact that $\sqrt{a+b}\le \sqrt{a} + \sqrt{b}$ for every $a, b \in \R_{++}$.
    \QEDA

	\section{Useful recursive formulas}\label{sec:recursive}
	
	The following two technical results play important roles in the complexity analysis of both GPB and CS-CS. We start by stating the following simple result for general sequences of nonnegative scalars.
	
	\begin{lemma} \label{lm:easyrecur1}
		Assume that sequences of nonnegative scalars $\{\theta_j\}$, $\{\delta_j\}$, $\{\eta_j\}$ and  $\{\alpha_j\}$ satisfy for every $j\ge 1$, $\theta_j \ge 1$, $\delta_j>0$ and 
		\beq \label{eq:easyrecur1}
		\eta_j \le \alpha_{j-1} - \theta_j \alpha_j + \delta_j.
		\eeq
		Let $\Theta_0:=1$ and $\Theta_j := \Pi_{i=1}^j \theta_i$ for every $j\ge 1$, then we have for every $k\ge 1$,
		\[
		\sum_{j=1}^k \Theta_{j-1} \eta_j
		\le
		\alpha_0 - \Theta_k \alpha_k +
		\sum_{j=1}^k  \Theta_{j-1} \delta_j.
		\]
	\end{lemma}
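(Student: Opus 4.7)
The plan is to prove this by a straightforward weighted telescoping argument. First I would multiply both sides of the given recurrence \eqref{eq:easyrecur1} by the nonnegative weight $\Theta_{j-1}$, which is permissible since $\Theta_{j-1}\ge 0$ by construction (each $\theta_i \ge 1$, and $\Theta_0 = 1$). This yields, for each $j \ge 1$,
\[
\Theta_{j-1}\eta_j \le \Theta_{j-1}\alpha_{j-1} - \Theta_{j-1}\theta_j\alpha_j + \Theta_{j-1}\delta_j.
\]
The key observation, which makes the weights $\Theta_{j-1}$ the right choice, is that $\Theta_{j-1}\theta_j = \Theta_j$ by the definition of $\Theta_j$, so the middle term on the right becomes $-\Theta_j \alpha_j$.

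Next I would sum these inequalities from $j=1$ to $k$. The portion
\[
\sum_{j=1}^k \bigl(\Theta_{j-1}\alpha_{j-1} - \Theta_j \alpha_j\bigr)
\]
is a telescoping sum that collapses to $\Theta_0 \alpha_0 - \Theta_k \alpha_k = \alpha_0 - \Theta_k \alpha_k$, using $\Theta_0 = 1$. Combining this with the remaining $\sum_{j=1}^k \Theta_{j-1}\delta_j$ term gives exactly the claimed inequality.

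There is essentially no obstacle here: the only thing that needs care is verifying that the weights $\Theta_{j-1}$ are nonnegative (so multiplying the inequality preserves its direction) and that the identity $\Theta_{j-1}\theta_j = \Theta_j$ holds for all $j \ge 1$, both of which are immediate from the hypotheses $\theta_j \ge 1$ and the definition $\Theta_j := \prod_{i=1}^j \theta_i$. No use of the positivity of $\eta_j$, $\delta_j$, or $\alpha_j$ is actually required in the argument beyond what is needed to form the product $\Theta_j$; the nonnegativity hypotheses in the statement are there because the lemma will be applied in contexts where those sign assumptions are natural.
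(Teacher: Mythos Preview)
Your proof is correct and follows exactly the same approach as the paper: multiply \eqref{eq:easyrecur1} by $\Theta_{j-1}$, use the identity $\Theta_{j-1}\theta_j=\Theta_j$, and telescope the sum. The paper's proof is essentially a one-line version of what you wrote.
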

	
	\begin{proof}
	    Multiplying \eqref{eq:easyrecur1} by $\Theta_{j-1}$ and summing the resulting inequality from $ j=1 $ to $ k $, we have
		\[
			\sum_{j=1}^k \Theta_{j-1} \eta_j \le  \sum_{j=1}^k \Theta_{j-1} \left(  \alpha_{j-1} - \theta_j \alpha_j + \delta_j \right) =  \alpha_0 - \Theta_k \alpha_k +
			\sum_{j=1}^k \Theta_{j-1} \delta_j.
		\]
		Hence, the lemma holds.
	\end{proof}
	
	The next result discusses a special case of the previous lemma in which $\theta_j=\theta$ and $\delta_j=\delta$ for every $j\ge 1$.
	
	\begin{corollary} \label{lm:easyrecur}
		Assume that scalars $\theta \ge 1$ and $\delta>0$, and
		sequences of nonnegative scalars
		$\{\eta_j\}$ and  $\{\alpha_j\}$ satisfy
		\beq \label{eq:easyrecur}
		\eta_j \le \alpha_{j-1} - \theta \alpha_j + \delta \quad \forall j \ge 1.
		\eeq
		Then, the following statements hold:
		\begin{itemize}
			\item[a)]  $ \min_{1\le j \le k} \eta_j \le 2\delta $ for every $ k\ge 1$ such that
			\[
			k \ge \min \left\lbrace \frac{\alpha_0}{\delta},  \frac{\theta}{\theta-1} \log\left( \frac{\alpha_0(\theta-1)}{\delta} + 1 \right)  \right\rbrace
			\]
			with the convention that the second term is equal to the first term
			when $\theta=1$ (Note  that the second term converges to the first term
			as $\theta \downarrow 1$.);
			
			\item[b)] $ \alpha_k \le \alpha_0 + k \delta$ for every $ k\ge 1 $.
		\end{itemize}
	\end{corollary}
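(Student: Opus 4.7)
The plan is to derive part b) directly from the recursion, then prove part a) by contradiction, handling the two bounds in the $\min$ separately and invoking Lemma \ref{lm:easyrecur1} for the second one.

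For part b), I would observe that since $\eta_j \ge 0$ and $\theta \ge 1$, the recursion \eqref{eq:easyrecur} gives $\theta \alpha_j \le \alpha_{j-1} + \delta$, and hence $\alpha_j \le \alpha_{j-1} + \delta$ (using $\alpha_j \ge 0$). Iterating from $j=1$ to $j=k$ yields $\alpha_k \le \alpha_0 + k\delta$.

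For part a), the strategy is a proof by contradiction: I suppose $\eta_j > 2\delta$ for all $j \in \{1,\dots,k\}$ and show that $k$ must be strictly smaller than each of the two quantities inside the $\min$. For the first quantity $\alpha_0/\delta$, I would use that \eqref{eq:easyrecur} combined with $\alpha_j \ge 0$ and $\theta \ge 1$ gives $\eta_j \le \alpha_{j-1} - \alpha_j + \delta$; the assumption $\eta_j > 2\delta$ then yields $\alpha_{j-1} - \alpha_j > \delta$, and summing from $1$ to $k$ gives $\alpha_0 > k\delta$, i.e., $k < \alpha_0/\delta$, contradicting the hypothesis. For the second quantity (in the case $\theta > 1$), I would apply Lemma \ref{lm:easyrecur1} with $\theta_j = \theta$, $\delta_j = \delta$, so that $\Theta_{j-1} = \theta^{j-1}$, obtaining
\[
\sum_{j=1}^k \theta^{j-1} \eta_j \;\le\; \alpha_0 - \theta^k \alpha_k + \delta\,\frac{\theta^k - 1}{\theta - 1} \;\le\; \alpha_0 + \delta\,\frac{\theta^k-1}{\theta-1}.
\]
Combining this with the supposition $\eta_j > 2\delta$ yields $2\delta(\theta^k-1)/(\theta-1) < \alpha_0 + \delta(\theta^k-1)/(\theta-1)$, i.e., $\theta^k < \alpha_0(\theta-1)/\delta + 1$, so $k \log \theta < \log(\alpha_0(\theta-1)/\delta + 1)$.

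The only delicate step is converting this into the advertised bound involving the factor $\theta/(\theta-1)$ rather than $1/\log\theta$. The key inequality I would invoke is $\log \theta \ge (\theta-1)/\theta$ for $\theta \ge 1$, which is elementary (verified by checking $f(\theta) := \log\theta - (\theta-1)/\theta$ satisfies $f(1)=0$ and $f'(\theta) = (\theta-1)/\theta^2 \ge 0$). This yields $k(\theta-1)/\theta < \log(\alpha_0(\theta-1)/\delta + 1)$, i.e., $k < \theta/(\theta-1) \cdot \log(\alpha_0(\theta-1)/\delta + 1)$, again contradicting the hypothesis. Finally, I would note the $\theta = 1$ convention is consistent since the second term converges to the first as $\theta \downarrow 1$ (by L'Hôpital), so the $\theta = 1$ case is subsumed by the already-proven first bound. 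The main obstacle is just remembering to invoke the $\log\theta \ge (\theta-1)/\theta$ estimate cleanly; once that is in hand the argument is routine.
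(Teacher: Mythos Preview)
Your proof is correct and uses essentially the same ingredients as the paper: Lemma~\ref{lm:easyrecur1} applied with constant $\theta_j=\theta$, $\delta_j=\delta$, together with the elementary inequality $\log\theta \ge (\theta-1)/\theta$ (equivalently $\theta \ge e^{(\theta-1)/\theta}$). The only stylistic difference is that the paper argues directly---bounding $\min_j \eta_j \le \alpha_0/\big(\sum_{j=1}^k \theta^{j-1}\big) + \delta$ and then bounding the geometric sum from below by $\max\{k,\,(e^{(\theta-1)k/\theta}-1)/(\theta-1)\}$---whereas you argue by contradiction and handle the first bound $\alpha_0/\delta$ by a separate telescoping rather than via the same weighted sum; these are equivalent rearrangements of the same computation.
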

	
	\begin{proof}
	a) It follows from Lemma \ref{lm:easyrecur1} with $\theta_j=\theta$ and $\delta_j=\delta$ for every $j\ge 1$ that
	\begin{equation}\label{ineq:conse}
			\sum_{j=1}^k \theta^{j-1} \left[ \min_{1\le j \le k}  \eta_j \right] \le \sum_{j=1}^k \theta^{j-1} \eta_j \le \alpha_0 - \theta^k \alpha_k +
			\sum_{j=1}^k \theta^{j-1} \delta.
		\end{equation}
	Using the fact that $ \theta \ge e^{(\theta-1)/\theta} $
		for every $\theta \ge 1$, 
		we have
		\[
		\sum_{j=1}^{k} \theta^{j-1} = \max \left\lbrace k, \frac{\theta^k-1}{\theta-1} \right\rbrace \ge \max\left\lbrace k, \frac{e^{(\theta-1) k /\theta} - 1}{\theta-1}\right\rbrace.
		\]
		This inequality, \eqref{ineq:conse} and the fact that $\alpha_k\ge 0$ imply that for every $ k\ge 1 $,
		\[
		\min_{1\le j \le k}  \eta_j \le \alpha_0\min \left\lbrace \frac 1k, \frac{\theta-1}{e^{(\theta-1) k /\theta} - 1} \right\rbrace + \delta,
		\]
		which can be easily seen to imply a).
		
		b) This statement follows from \eqref{ineq:conse}, the fact that $\eta_j\ge 0$, and the assumption that $ \theta\ge 1$.
	\end{proof}

	\section{The Composite Subgradient Method}\label{sec:CS pf}

	This section contains two subsections. The first one provides the analysis of the CS-CS method, which is used to derive the $\bar \varepsilon$-iteration complexity of CS-CS in Subsection \ref{subsec:cs}. The second one presents an adaptive variant of CS-CS and establishes the $\bar \varepsilon$-iteration complexity of it. 
	
	\subsection{Analysis of CS-CS}
	
	\begin{proposition}\label{prop:sub-new}
		Let an initial point $x_0\in \dom h$, $(L_f,M_f) \in \R_+^2 $ 
		and instance $(f,f';h)$ satisfying
		conditions (A1)-(A3)  be given.
		Then, the number of iterations performed by CS-CS$(x_0,\lam)$
		with $\lam\le \bar \varepsilon/[4(M_f^2+\bar \varepsilon L_f)] $
		until it finds
		a $ \bar \varepsilon $-solution is bounded by 
		\[
		\left \lfloor \min \left\lbrace \frac{d_0^2}{\lam \bar \varepsilon}, \frac{1+\lam \mu}{\lam \mu} \log \left( \frac{\mu d_0^2}{\bar \varepsilon} + 1 \right) \right\rbrace \right \rfloor +1.
		\]
	\end{proposition}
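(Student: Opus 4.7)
The plan is to run a textbook composite-subgradient analysis whose only non-routine step is converting the stepsize bound $\lam \le \bar\varepsilon/[4(M_f^2+\bar\varepsilon L_f)]$ into a uniform per-iteration ``slack'' of $\bar\varepsilon/2$, after which the iteration count will fall out of Corollary~\ref{lm:easyrecur}. Fix any $u\in\dom h$. First I would write the first-order optimality inequality for $x_{j+1}$ in \eqref{eq:sub}: because $h$ is $\mu$-convex and the prox term adds $1/\lam$ to the modulus of strong convexity, the objective of \eqref{eq:sub} is $(\mu+1/\lam)$-strongly convex, so Theorem 5.25(b) of \cite{beck2017first} gives
\[
\ell_f(x_{j+1};x_j)+h(x_{j+1})+\tfrac{1}{2\lam}\|x_{j+1}-x_j\|^2+\tfrac{1}{2}(\mu+1/\lam)\|u-x_{j+1}\|^2 \le \ell_f(u;x_j)+h(u)+\tfrac{1}{2\lam}\|u-x_j\|^2.
\]
Using $\ell_f(u;x_j)\le f(u)$ on the right and the hybrid estimate \eqref{ineq:est} with $(u,v)=(x_{j+1},x_j)$, namely $f(x_{j+1})\le \ell_f(x_{j+1};x_j)+2M_f\|d\|+(L_f/2)\|d\|^2$ where $d:=x_{j+1}-x_j$, I would rearrange to obtain
\[
\phi(x_{j+1})-\phi(u)+\tfrac{1}{2}(\mu+1/\lam)\|u-x_{j+1}\|^2 \le \tfrac{1}{2\lam}\|u-x_j\|^2+2M_f\|d\|-\tfrac{1-\lam L_f}{2\lam}\|d\|^2.
\]

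The key step is to bound the last two terms by $\bar\varepsilon/2$. Viewing $t\mapsto 2M_f t-\tfrac{1-\lam L_f}{2\lam}t^2$ as a downward parabola in $t=\|d\|$ (note $\lam L_f<1$ from the stepsize hypothesis), its maximum value is $2\lam M_f^2/(1-\lam L_f)$. A short calculation shows $2\lam M_f^2/(1-\lam L_f)\le \bar\varepsilon/2$ is equivalent to $\lam(4M_f^2+\bar\varepsilon L_f)\le \bar\varepsilon$, and this is implied by the assumed bound $\lam\le \bar\varepsilon/[4(M_f^2+\bar\varepsilon L_f)]$ since $4M_f^2+\bar\varepsilon L_f\le 4(M_f^2+\bar\varepsilon L_f)$. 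This is the one slightly delicate arithmetic point; everything else is mechanical.

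Plugging the bound in and specializing to $u=x_0^\ast$, and multiplying through by $\lam$, yields
\[
\lam(\phi(x_{j+1})-\phi^\ast)\le \tfrac{1}{2}\|x_0^\ast-x_j\|^2-\tfrac{1+\lam\mu}{2}\|x_0^\ast-x_{j+1}\|^2+\tfrac{\lam\bar\varepsilon}{2}.
\]
Setting $\eta_j:=\lam(\phi(x_j)-\phi^\ast)$, $\alpha_j:=\tfrac{1}{2}\|x_0^\ast-x_j\|^2$, $\theta:=1+\lam\mu$, and $\delta:=\lam\bar\varepsilon/2$, this is exactly the recursion \eqref{eq:easyrecur}. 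Corollary~\ref{lm:easyrecur}(a) then gives $\min_{1\le j\le k}\eta_j\le 2\delta=\lam\bar\varepsilon$, i.e.\ $\min_j \phi(x_j)-\phi^\ast\le\bar\varepsilon$, as soon as
\[
k\ge \min\left\{\tfrac{\alpha_0}{\delta},\,\tfrac{\theta}{\theta-1}\log\!\left(\tfrac{\alpha_0(\theta-1)}{\delta}+1\right)\right\}
=\min\left\{\tfrac{d_0^2}{\lam\bar\varepsilon},\,\tfrac{1+\lam\mu}{\lam\mu}\log\!\left(\tfrac{\mu d_0^2}{\bar\varepsilon}+1\right)\right\}.
\]
Taking the floor and adding one accounts for the smallest integer $k$ exceeding this threshold, which gives exactly the stated bound. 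The argument holds verbatim for any $\mu\in[0,\bar\mu]$ since (A4) is used only through the strong-convexity modulus, and smaller $\mu$ is always admissible.
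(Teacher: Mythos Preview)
Your argument is correct and essentially coincides with the paper's own proof: both derive the one-step inequality from strong convexity of the prox objective (Theorem~5.25(b) of \cite{beck2017first}) combined with \eqref{ineq:est}, bound the residual term $2\lam M_f^2/(1-\lam L_f)$ by $\bar\varepsilon/2$ using the stepsize hypothesis, and then invoke Corollary~\ref{lm:easyrecur}(a). The only cosmetic differences are that the paper packages the one-step estimate as Lemma~\ref{lem:descent} (stated with the optimal pair $(\bar M_f,\bar L_f)$, using $T_{\bar\varepsilon}^2\le M_f^2+\bar\varepsilon L_f$ to get the same stepsize conclusion) and scales $(\eta_j,\alpha_j,\delta)$ by $1/\lam$ relative to your choice, neither of which changes the substance.
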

	\begin{proof} 
		Recall that an iteration of CS-CS$(x_0,\lam)$ is as in \eqref{eq:sub}.
		Noting that \eqref{eq:sub} satisfies \eqref{eq:x} with $(z_0,z,\Gamma)=(x_{j},x_{j+1},\ell_f(\cdot;x_{j})+h)$, and using the facts that
		$\lam\le \bar \varepsilon/[4 (M_f^2+\bar \varepsilon L_f)] \le \bar \varepsilon/(4 T_{\bar \varepsilon}^2) < 1/\bar L_f$, $\ell_f(\cdot;x_{j}) + h \in \mConv{n}$, and $\ell_f(\cdot;x_{j}) + h\le \phi$, we conclude that the assumptions of Lemma \ref{lem:descent} is satisfied.
		Hence, it follows from \eqref{ineq:descent} with $(u,z,z_0)=(x_0^*,x_{j+1},x_{j})$ that
	    \[
	    \phi(x_{j+1}) - \phi^* - \frac{1}{2\lam} \|x_0^*-x_{j}\|^2 + \frac{1+\lam \mu}{2\lam} \|x_0^*-x_{j+1}\|^2
	    \le \frac{2\lam \bar M_f^2}{1-\lam \bar L_f} 
	    \le \frac{\bar \varepsilon}{2}
	    \]
	    where the last inequality is due to the facts that $2\lam \bar M_f^2/(1-\lam \bar L_f)$ is an increasing function in $\lam$ and
	    $\lam\le \bar \varepsilon/(4 T_{\bar \varepsilon}^2)$.
		Since the above inequality with $j=j-1$ satisfies \eqref{eq:easyrecur} with 
		\[
		\eta_j=\phi(x_j) - \phi^*, \quad \alpha_j=\frac{1}{2\lam}\|x_j-x_0^*\|^2, \quad \theta=1+\lam \mu, \quad \delta= \frac{\bar \varepsilon}2 ,
		\]
		it follows from Corollary \ref{lm:easyrecur}(a) and the fact that $ \alpha_0=d_0^2/(2\lam) $ that
		$ \min_{1\le j \le k} \phi(x_j)-\phi^*\le \bar \varepsilon $ for 
		every index $k\ge 1$ such that 
		\[
		k\ge \min \left\lbrace \frac{d_0^2}{\lam \bar \varepsilon}, \frac{1+\lam \mu}{\lam \mu}\log\left( \frac{\mu d_0^2}{\bar \varepsilon} + 1\right)\right\rbrace,
		\]
		and hence that the lemma holds.
	\end{proof}
	
	\subsection{An adaptive CS method}\label{subsec:A-CS}
	
	This subsection present an adaptive variant of the CS-CS method, namely, the A-CS method, and establish $\bar \varepsilon$-iteration complexity of the adaptive method. The proposed method is a universal method for solving the HCCO problem \eqref{eq:ProbIntro} since it does not rely on any problem parameters.
	
	\noindent\rule[0.5ex]{1\columnwidth}{1pt}
	
	A-CS
	
	\noindent\rule[0.5ex]{1\columnwidth}{1pt}
	\begin{itemize}
		\item [0.] Let $ x_0\in \dom h $, $\lam_0 >0$ and $ \bar \varepsilon>0 $
		be given, and set $\lam=\lam_0$ and $j=0$;
	    \item[1.] compute 
	    \[
		x =\underset{u\in  \R^n}\argmin
		\left\lbrace  \ell_f(u;x_{j})+h(u) +\frac{1}{2\lam} \|u- x_{j} \|^2 \right\rbrace;
	    \]
		\item[2.]  {\bf if} $f(x)-\ell_f(x;x_{j}) - \|x-x_{j}\|^2/(2\lam) > \bar \varepsilon/2$, {\bf then} set $\lam = \lam/2$ and go to step 1; {\bf else}, go to
		step 3;
		\item[3.]
		set $\lam_{j+1}=\lam$, $x_{j+1}=x$ and $j\leftarrow j+1$, and go to step 1.
	\end{itemize}
	\rule[0.5ex]{1\columnwidth}{1pt}

    \begin{lemma}\label{lem:ACSCS}
    The following statements hold for A-CS($\lam_0, \bar \varepsilon$):
    \begin{itemize}
        \item[a)] for every $j\ge 0$, we have
        \begin{align}
		&x_{j+1} =\underset{u\in  \R^n}\argmin
		\left\lbrace  \ell_f(u;x_{j})+h(u) +\frac{1}{2\lam_{j+1}} \|u- x_{j} \|^2 \right\rbrace, \label{eq:sub1} \\
		&f(x_{j+1}) -\ell_f(x_{j+1};x_{j}) - \frac{1}{2\lam_{j+1}}\|x_{j+1}-x_{j}\|^2 \le \frac{\bar \varepsilon}2; \label{ineq:iteration}
	    \end{align}
        \item[b)] if $\lam_{j}\le \bar \varepsilon/(4 T_{\bar \varepsilon}^2)$ where $T_{\bar \varepsilon}$ is as in \eqref{def:T}, then \eqref{ineq:iteration} holds with $\lam_{j+1}=\lam_j$;
        \item[c)] $\{\lam_j\}$ is a non-increasing sequence;
        \item[d)] for every $j\ge 0$, 
        \begin{equation}\label{ineq:lamj}
        \lam_j \ge \underline{\lam}:= \min \left \lbrace \frac{\bar \varepsilon}{8T_{\bar \varepsilon}^2}, \lam_0 \right\rbrace.
        \end{equation}
    \end{itemize}
    \end{lemma}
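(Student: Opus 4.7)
The plan is to dispatch parts (a) and (c) as essentially bookkeeping observations about the algorithmic flow, then carry out the one real calculation in part (b), and finally obtain (d) as a short induction built on (b) and (c).

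For (a), I would simply unpack step 3: it is only reached after the test in step 2 passes with the currently held $\lambda$, so the point $x$ produced in step 1 with that $\lambda$ and promoted to $x_{j+1}$ with $\lambda_{j+1}=\lambda$ automatically satisfies both \eqref{eq:sub1} and \eqref{ineq:iteration}. For (c), note that $\lambda$ is set in step 0 and modified only in step 2, where it is halved; since $\lambda_{j+1}$ is simply the current $\lambda$ at the moment step 3 executes, the sequence $\{\lambda_j\}$ is non-increasing.

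The technical heart is (b). Assuming $\lambda_j\le \bar\varepsilon/(4T_{\bar\varepsilon}^2)$ and applying \eqref{ineq:est} with $(M_f,L_f,u,v)=(\bar M_f,\bar L_f,x,x_j)$, I would use Young's inequality in the form
\[
2\bar M_f\|x-x_j\| \le \frac{1-\lambda_j\bar L_f}{2\lambda_j}\|x-x_j\|^2 + \frac{2\lambda_j\bar M_f^2}{1-\lambda_j\bar L_f},
\]
a weighting chosen so that the quadratic term combines with $(\bar L_f/2)\|x-x_j\|^2$ to produce exactly $\|x-x_j\|^2/(2\lambda_j)$. This yields $f(x)-\ell_f(x;x_j)-\|x-x_j\|^2/(2\lambda_j)\le 2\lambda_j\bar M_f^2/(1-\lambda_j\bar L_f)$. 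The hypothesis then gives $\lambda_j\bar L_f \le \bar\varepsilon\bar L_f/(4T_{\bar\varepsilon}^2)\le 1/4$ (using $\bar\varepsilon\bar L_f\le T_{\bar\varepsilon}^2$) and, since $4\bar M_f^2+\bar\varepsilon\bar L_f \le 4T_{\bar\varepsilon}^2$, also $\lambda_j\le \bar\varepsilon/(4\bar M_f^2+\bar\varepsilon\bar L_f)$, which is equivalent to $2\lambda_j\bar M_f^2/(1-\lambda_j\bar L_f)\le \bar\varepsilon/2$. Hence the test in step 2 is passed at the first try with $\lambda=\lambda_j$, so $\lambda_{j+1}=\lambda_j$ and \eqref{ineq:iteration} holds.

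Finally, for (d) I would argue by induction on $j$. If at the start of iteration $j$ we already have $\lambda\le \bar\varepsilon/(4T_{\bar\varepsilon}^2)$, part (b) guarantees the test passes without any halving, so $\lambda_{j+1}=\lambda_j$ and the induction hypothesis is preserved. Otherwise, any halving executed during iteration $j$ occurs only when the test fails; by the contrapositive of (b), at that moment $\lambda > \bar\varepsilon/(4T_{\bar\varepsilon}^2)$, so after halving the value is still strictly greater than $\bar\varepsilon/(8T_{\bar\varepsilon}^2)$. The halving loop must terminate because once $\lambda$ drops at or below $\bar\varepsilon/(4T_{\bar\varepsilon}^2)$, (b) forces acceptance. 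In every case $\lambda_{j+1}\ge \min\{\lambda_j,\bar\varepsilon/(8T_{\bar\varepsilon}^2)\}$, which combined with $\lambda_0$ as the starting point yields $\lambda_j\ge \underline\lambda$ as in \eqref{ineq:lamj}. The only part requiring care is the Young-inequality bookkeeping in (b); everything else is structural.
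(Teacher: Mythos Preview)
Your proposal is correct and follows essentially the same approach as the paper: parts (a) and (c) are read off from the algorithm, part (b) is obtained by combining \eqref{ineq:est} with the elementary inequality $2ab\le a^2+b^2$ (your Young-inequality weighting is exactly this), and part (d) is deduced from (b), (c), and the halving rule. You supply more explicit bookkeeping than the paper does---the verification that $\lambda_j\le \bar\varepsilon/(4T_{\bar\varepsilon}^2)$ forces $2\lambda_j\bar M_f^2/(1-\lambda_j\bar L_f)\le \bar\varepsilon/2$, and the spelled-out induction for (d)---but the underlying argument is identical.
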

    
    \begin{proof}
    a) This statement directly follows from the description of A-CS.
    
    b) Using \eqref{ineq:est} with $(M_f, L_f, x,y)=(\bar M_f, \bar L_f, x_{j+1},x_{j})$ and the inequality that $a^2+b^2\ge 2ab$ for $a,b \in \R$, we have
    \begin{align*}
    f(x_{j+1})-\ell_f(x_{j+1};x_{j}) - \frac{1}{2\lam_{j}}\|x_{j+1}-x_{j}\|^2 
    &\le 2\bar M_f \|x_{j+1}-x_{j}\| - \frac{1-\lam_{j} \bar L_f}{2\lam_{j}} \|x_{j+1}-x_{j}\|^2\\
    &\le \frac{2\lam_{j} \bar M_f^2}{1-\lam_{j} \bar L_f}
    \le \frac{\bar \varepsilon}2
    \end{align*}
    where the last inequality is due to the assumption that $\lam_{j}\le \bar \varepsilon/(4 T_{\bar \varepsilon}^2)$.
    Hence, \eqref{ineq:iteration} holds with $\lam_{j+1}=\lam_j$.
    
    c) This statement clearly follows from steps 2 and 3 of A-CS.
    
    d) This statement follows trivially from b) and c), and the way $\lam$ is updated in step 2.
    \end{proof}

    \begin{proposition}\label{prop:A-CS}
    Let an initial point $x_0$ and a universal constant $C>0$ be given, and consider an instance $(f,f';h)$ of \eqref{eq:ProbIntro} satisfying conditions (A1)-(A3).
    Moreover, assume $(\lam_0, \bar \varepsilon)\in \R_{++}^2$ is such that $\lam_0\ge \bar \varepsilon/(C T_{\bar \varepsilon}^2)$ where $T_{\bar \varepsilon}$ is as in \eqref{def:T}.
    Then, the following statements hold:
    \begin{itemize}
        \item[a)] A-CS$(\lam_0, \bar \varepsilon)$ has $ \bar \varepsilon $-iteration complexity given by \eqref{eq:bound-bar};
        \item[b)] the total number of times  $\lam$ is halved in step 2 is bounded by
        \[
        \left \lceil \log \left( \max\left\lbrace \frac{8\lam_0 T_{\bar \varepsilon}^2}{\bar \varepsilon}, 1 \right \rbrace \right) \right \rceil.
        \]
    \end{itemize}
    \end{proposition}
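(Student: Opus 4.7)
The plan is to handle part~(b) first via Lemma~\ref{lem:ACSCS}(d), which yields a uniform lower bound on the sequence $\{\lam_j\}$, and then use that lower bound together with a one-step descent inequality to establish the iteration-complexity bound of part~(a) through the variable-parameter recursion of Lemma~\ref{lm:easyrecur1}.

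For part~(b), the key observation is that each execution of step~2 that triggers halving divides the current stepsize by $2$, while Lemma~\ref{lem:ACSCS}(d) guarantees $\lam_j \ge \underline{\lam} := \min\{\bar \varepsilon/(8T_{\bar \varepsilon}^2),\lam_0\}$ throughout. Hence the total number $K_h$ of halvings must satisfy $\lam_0/2^{K_h} \ge \underline{\lam}$. When $\lam_0 \le \bar \varepsilon/(8T_{\bar \varepsilon}^2)$ no halving occurs; otherwise $K_h \le \lceil \log_2(8\lam_0 T_{\bar \varepsilon}^2/\bar \varepsilon)\rceil$. Combining both cases gives the stated bound.

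For part~(a), I would derive a one-step descent inequality analogous to that of Proposition~\ref{prop:sub-new} but accommodating variable stepsizes. Using the $(1/\lam_{j+1}+\bar \mu)$-strong convexity of the subproblem \eqref{eq:sub1}, the bound $\ell_f(u;x_j) \le f(u)$ from convexity of $f$, and the acceptance condition \eqref{ineq:iteration} from Lemma~\ref{lem:ACSCS}(a), setting $u=x_0^*$ and $A_j := \|x_0^* - x_j\|^2/2$ should yield
\[
\lam_{j+1}[\phi(x_{j+1}) - \phi^*] \le A_j - (1+\lam_{j+1}\bar \mu) A_{j+1} + \frac{\lam_{j+1}\bar \varepsilon}{2} \qquad \forall j \ge 0.
\]
Applying Lemma~\ref{lm:easyrecur1} with $\eta_j = \lam_j[\phi(x_j)-\phi^*]$, $\alpha_j = A_j$, $\theta_j = 1+\lam_j\bar \mu$, $\delta_j = \lam_j\bar \varepsilon/2$, and $\Theta_j := \prod_{i=1}^j(1+\lam_i\bar \mu)$, then using $\phi(x_j) \ge \phi^*$ to take the minimum over $1 \le j \le k$, produces
\[
\min_{1\le j \le k}\phi(x_j) - \phi^* \le \frac{A_0}{\sum_{j=1}^k \Theta_{j-1}\lam_j} + \frac{\bar \varepsilon}{2}.
\]
The lower bound $\lam_j \ge \underline{\lam}$ forces $\sum_{j=1}^k \Theta_{j-1}\lam_j \ge \underline{\lam}\sum_{j=1}^k (1+\underline{\lam}\bar \mu)^{j-1}$, which after a manipulation identical to the one in the proof of Theorem~\ref{thm:bound-cvx} bounds the required iteration count by $\min\{d_0^2/(\underline{\lam}\bar \varepsilon),\,(1/(\underline{\lam}\bar \mu) + 1)\log(\bar \mu d_0^2/\bar \varepsilon + 1)\}$. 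The hypothesis $\lam_0 \ge \bar \varepsilon/(CT_{\bar \varepsilon}^2)$ implies $\underline{\lam} \ge c\bar \varepsilon/T_{\bar \varepsilon}^2$ for a universal constant $c$ depending only on $C$, reducing the above expression to \eqref{eq:bound-bar}.

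The main obstacle is that the stepsize $\lam_{j+1}$ varies from iteration to iteration, so Corollary~\ref{lm:easyrecur} (which presumes constant $\theta$ and $\delta$) cannot be applied directly. This is overcome by appealing to the variable-parameter version Lemma~\ref{lm:easyrecur1} together with the uniform lower bound $\lam_j \ge \underline{\lam}$ from Lemma~\ref{lem:ACSCS}(d); everything else in the argument parallels the constant-stepsize proof of Theorem~\ref{thm:bound-cvx}.
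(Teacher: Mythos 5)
Your proposal is correct and follows essentially the same route as the paper's proof: part (b) from the halving rule combined with the lower bound $\lam_j\ge\underline{\lam}$ of Lemma \ref{lem:ACSCS}(d), and part (a) from the strong convexity of the subproblem \eqref{eq:sub1} plus the acceptance condition \eqref{ineq:iteration}, fed into the variable-parameter recursion of Lemma \ref{lm:easyrecur1} and then reduced to the constant-stepsize bound via $\lam_j\ge\underline{\lam}\ge c\bar\varepsilon/T_{\bar\varepsilon}^2$. The only cosmetic difference is that the final manipulation you attribute to the proof of Theorem \ref{thm:bound-cvx} is the one carried out in Corollary \ref{lm:easyrecur}(a), which is what the paper invokes.
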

    
    \begin{proof}
    a) It follows from the fact that $h$ is $\mu$-convex that the objective function in \eqref{eq:sub1} is $(\mu + \lam_{j+1}^{-1})$-strongly convex.
	Using this conclusion, \eqref{eq:sub1} and Theorem 5.25(b) of \cite{beck2017first},
    we have for every $u\in \dom h$,
    \begin{align*}
        \ell_f(x_{j+1};x_{j}) &+ h(x_{j+1}) + \frac{1}{2\lam_{j+1}} \|x_{j+1}-x_{j}\|^2 + \frac{1}{2}\left( \mu + \frac{1}{\lam_{j+1}}\right) \|u-x_{j+1}\|^2\\
		&\le \ell_f(u;x_{j}) + h(u) + \frac{1}{2\lam_{j+1}} \|u-x_{j}\|^2  \le \phi(u) + \frac{1}{2\lam_{j+1}} \|u-x_{j}\|^2.
    \end{align*}
    It follows from the above inequality with $u=x_0^*$ and \eqref{eq:sub1} that
    \begin{align*}
       (1 + \lam_{j+1} \mu) \|x_0^*-x_{j+1}\|^2 &+ 2\lam_{j+1}[\phi(x_{j+1}) - \phi^*]
		- \|x_0^*-x_{j}\|^2 \\
		&\le 2\lam_{j+1} \left[ f(x_{j+1}) - \ell_f(x_{j+1};x_{j}) - \frac{1}{2\lam_{j+1}} \|x_{j+1}-x_{j}\|^2 \right]
		\le \bar \varepsilon \lam_{j+1}.
    \end{align*}
   Since the above inequality with $j=j-1$ satisfies \eqref{eq:easyrecur1} with 
		\[
		\eta_j=2\lam_{j}[\phi(x_j) - \phi^*], \quad \alpha_j=\|x_j-x_0^*\|^2, \quad \theta_j=1+\lam_{j} \mu, \quad \delta_j= \bar \varepsilon \lam_{j},
		\]
		it follows from Lemma \ref{lm:easyrecur1} and the fact that $ \alpha_0=d_0^2 $ that
		\begin{equation}\label{ineq:Theta}
		    \left(\sum_{j=1}^k  2\lam_{j}\Theta_{j-1} \right) \min_{1\le j \le k} [\phi(x_j) - \phi^*]
		\le \sum_{j=1}^k  2\lam_{j}\Theta_{j-1} [\phi(x_j) - \phi^*]
		\le
		d_0^2 +
		\left( \sum_{j=1}^k  2\lam_{j}\Theta_{j-1} \right) \frac{\bar \varepsilon}{2}
		\end{equation}
		where $\Theta_j = \Pi_{i=1}^j (1+\lam_{j}\mu)$ for every $j\ge 1$.
		Note that it follows from Lemma \ref{lem:ACSCS}(d) that $\Theta_j\ge (1+\underline{\lam}\mu)^j $ for every $j\ge 1$.
		Using this observation, \eqref{ineq:Theta}, and Lemma \ref{lem:ACSCS}(d), and following the argument in the proof of Corollary \ref{lm:easyrecur}(a), we conclude that $\min_{1\le j\le k} \phi(x_j)-\phi^*\le \bar \varepsilon$ for $k$ satisfying 
		\[
		k \ge \min \left\lbrace \frac{d_0^2}{\underline{\lam} \bar \varepsilon}, \frac{1+\underline{\lam} \mu}{\underline{\lam} \mu} \log \left( \frac{\mu d_0^2}{\bar \varepsilon} + 1 \right) \right\rbrace,
		\]
        and hence that the statement holds in view of \eqref{ineq:lamj} and the assumption that $\lam_0\ge \bar \varepsilon/(CT_{\bar \varepsilon}^2)$.
    
    b) This statement immediately follows from the update rule in $\lam_j$ and Lemma \ref{lem:ACSCS}(d).
    \end{proof}

    It is worth noting that
    a result similar to
    Corollary \ref{cor:holder} 
    dealing with instances
    $(f,f';h)$ of \eqref{eq:ProbIntro} satisfying (A1), (A2), and \eqref{eq:hybrid-holder}
    can also be established
    for A-CS.
    
	\section{Properties of Bundle Update Schemes (E2) and (E3)}\label{sec:pf E2E3}
	
	This section shows that the update schemes (E2) and (E3) of Subsection \ref{subsec:update} are special implementations of BU.
	
	\begin{proposition}\label{lem:E2}
    Consider the update $\Gamma^+$ of (E2) and set
    $\bar \Gamma = A_f^+ + h$
    where $A_f^+$ is as in \eqref{def:Af+}. Then,
    $(\Gamma^+,\bar \Gamma)$ satisfies \eqref{def:Gamma} and \eqref{def:bar Gamma}.
    As a consequence,
    $\Gamma^+$ is a special implementation of BU.
	\end{proposition}
	
	\begin{proof}
	    First, using the facts that $h \in \mConv{n}$ and $\ell_f(\cdot;z)\le f$ for any $z\in \R^n$, the fact that $\bar \Gamma = A_f^+ + h$, and the definition of $\Gamma^+$ in \eqref{eq:G-agg}, we have 
     \[
        \Gamma^+, \bar \Gamma \in \mConv{n}, \quad \Gamma^+ \le \phi.
     \]
     We have thus shown the inclusion and the second inequality in \eqref{def:Gamma} and the inclusion in \eqref{def:bar Gamma}.
	    It follows from the definitions of $\Gamma^+$ and $\bar \Gamma$ that
	    \[
	        \Gamma^+=\max\{\bar \Gamma, \ell_f(\cdot;x)+h\},
	    \]
	    and hence that $\Gamma^+$ satisfies the first inequality in \eqref{def:Gamma} for any $\tau\in(0,1)$.
	    Moreover, using the fact that $\bar \Gamma = A_f^+ + h$, relation \eqref{theta2}, and the definitions of $\Gamma$ and $A_f^+$ in \eqref{def:Gamma-E2} and \eqref{def:Af+}, respectively, we have
     \[
        \bar \Gamma(x) = A_f^+(x) + h(x) \overset{\eqref{def:Af+}}{=} \theta A_f(x) + (1-\theta) \ell_f(x;x^-) + h(x) \overset{\eqref{theta2}}{=} \max \{A_f(x),\ell_f(x;x^-)\} + h(x) \overset{\eqref{def:Gamma-E2}}{=} \Gamma(x),
     \]
     and hence the first identity in \eqref{def:bar Gamma} holds.
	    Finally, we prove $\bar \Gamma$ satisfies the second identity in \eqref{def:bar Gamma}.
     It follows from the definition of $\ell_f(\cdot;\cdot)$ in \eqref{def:gamma} and relations \eqref{theta1} and \eqref{def:Af+} that
     \[
        \frac1\lam (x-x^c) + \partial h(x) + \nabla A_f^+ \overset{\eqref{def:gamma}, \eqref{def:Af+}}{=}
        \frac1\lam (x-x^c) + \partial h(x) 
+ \theta \nabla A_f + (1-\theta) f'(x^-) \overset{\eqref{theta1}}{\ni} 0.
     \]
	    In conclusion, $\Gamma^+$ as in (E2) is a special way of implementing BU.
	\end{proof}

	\begin{proposition}\label{lem:E3}
    Consider the update $\Gamma^+$ of (E3) and set
    $\bar\Gamma=\Gamma(\cdot;B(x))$
    where $B(x)$ is as in \eqref{def:C+} and  $\Gamma(\cdot;B(x))$ is as in \eqref{eq:Gamma-E2}. Then,
    $(\Gamma^+,\bar \Gamma)$ satisfies \eqref{def:Gamma} and \eqref{def:bar Gamma}.
    As a consequence,
    $\Gamma^+$ is a special implementation of BU.
	\end{proposition}
	
	\begin{proof}
	    First, using the facts that $h \in \mConv{n}$ and $\ell_f(\cdot;b)\le f$ for any $b\in \R^n$,
	    and the definition of $\Gamma(\cdot;B)$ in \eqref{eq:Gamma-E2}, it is easy to see that for any  $B \subset \R^n$, we have
        \begin{equation}\label{eq:Gamma-relation}
            \Gamma(\cdot;B)\in \mConv{n}, \quad \Gamma(\cdot;B) \le \phi.
        \end{equation}
        Recall that
        \begin{equation}\label{eq:twoGammas}
            \Gamma^+=\Gamma(\cdot;B^+), \quad \bar\Gamma=\Gamma(\cdot;B(x)),
        \end{equation}
        hence it follows from \eqref{eq:Gamma-relation} that
     \[
        \Gamma^+, \bar \Gamma \in \mConv{n}, \quad \Gamma^+ \le \phi.
     \]
     We have thus shown the inclusion and the second inequality in \eqref{def:Gamma} and the inclusion in \eqref{def:bar Gamma}.
	    Also, it is easy to see from \eqref{eq:twoGammas}, the first inclusion in \eqref{eq:C+} and the definition of $\Gamma(\cdot;B)$ in \eqref{eq:Gamma-E2} that
	    \[
	    \Gamma^+\ge \max\{\bar \Gamma, \ell_f(\cdot;x)+h\},
	    \]
	    and hence that $\Gamma^+$ satisfies the first inequality in \eqref{def:Gamma} fors any $\tau\in(0,1)$.
	    Moreover, it follows from the fact that $\Gamma=\Gamma(\cdot;B)$, \eqref{eq:Gamma-E2}, \eqref{eq:twoGammas}, and the definition of $B(x)$ in \eqref{def:C+} that the first identity in \eqref{def:bar Gamma} holds.
	    Finally, we prove $\bar \Gamma$ satisfies the second identity in \eqref{def:bar Gamma}.
	    Using
		the definitions of $\Gamma(\cdot;B)$ and $B(x)$ in \eqref{eq:Gamma-E2} and \eqref{def:C+}, respectively,
		and a well-known formula for the subdifferential of the pointwise maximum of finitely many convex functions (e.g., see Corollary 4.3.2 of \cite{urruty1996convex1}), we conclude that 
		\[
		\partial \Gamma(x) = \barco \left(\cup \{ f'(b): b \in B(x)\} \right) + \partial h(x).
		\]
		Using the same reasoning but with $\Gamma$ replaced by $\bar \Gamma$, we conclude that the above set is also $\partial \bar \Gamma(x)$, and hence that
		\[
		\frac{1}{\lam}(x_0-x) \in \partial \Gamma(x)
		=\partial \bar \Gamma (x)
		\]
		where the inclusion is due to \eqref{eq:x-pre}. Now the second identity in \eqref{def:bar Gamma} immediately follows.
		In conclusion, $\Gamma^+$ as in (E3) is a special way of implementing BU.
	\end{proof}

\end{document}